\newcommand{\s}{\ensuremath{\mathcal{S}}}
\newcommand{\N}{\ensuremath{\mathbb{N}}}
\newcommand{\Mcal}{\ensuremath{\mathcal{M}}}
\newcommand{\Pcal}{\ensuremath{\mathcal{P}}}
\newcommand{\Rcal}{\ensuremath{\mathcal{R}}}
\newcommand{\Bcal}{\ensuremath{\mathcal{B}}}
\newcommand{\Ccal}{\ensuremath{\mathcal{C}}}
\newcommand{\Fcal}{\ensuremath{\mathcal{F}}}
\newcommand{\Ocal}{\ensuremath{\mathcal{O}}}
\newcommand{\Ncal}{\ensuremath{\mathcal{N}}}
\newcommand{\Vcal}{\ensuremath{\mathcal{V}}}
\newcommand{\Wcal}{\ensuremath{\mathcal{W}}}
\newcommand{\Scal}{\ensuremath{\mathcal{S}}}
\newcommand{\alt}[2]{\left\{\begin{array}{l}#1\\#2\end{array} \right.}
\newcommand{\es}{\emptyset}
\newcommand{\sm}{\smallsetminus}
\newcommand{\sub}{\subseteq}
\newcommand{\Reff}{\Rcal_{\text{eff}}}
\newcommand{\Reffk}{\Rcal_{\text{eff}}^\kappa}
\def\lc{left-con\-nec\-ted}
\def\td{tree-decom\-po\-si\-tion}
\def\ttd{tree of tree-decom\-po\-si\-tions}
\theoremstyle{definition}
\theoremstyle{plain}
\newtheorem{lemma}{Lemma}[section]
\newtheorem{thm}[lemma]{Theorem}
\newtheorem{coro}[lemma]{Corollary}
\newtheorem{remark}[lemma]{Remark}
\newtheorem{prop}[lemma]{Proposition}
\theoremstyle{remark}
\newtheorem{exam}[lemma]{Example}
\newcommand{\comment}[1]{}
\newenvironment{txteq*}
{
	\begin{equation*}
	\begin{minipage}[t]{0.85\textwidth} 
	\em                                
}
{\end{minipage}\end{equation*}\ignorespacesafterend}
\def\?#1{\vadjust{\vbox to 0pt{\vss\vskip-8pt\leftline{%
				\llap{\hbox{\vbox{\pretolerance=-1
							
\doublehyphendemerits=0\finalhyphendemerits=0
							\hsize16truemm\tolerance=10000\small
							\lineskip=0pt\lineskiplimit=0pt
							\rightskip=0pt 
plus16truemm\baselineskip8pt\noindent
							\hskip0pt        
word is never hyphenated!)
							#1\endgraf}\hskip7truemm}}}\vss}}}
\begin{document}
	
	\title{\bf{Canonical trees of tree-decompositions}}
	\author{Johannes Carmesin\\
		University of Birmingham\\Birmingham, UK\\
		\and
		Matthias Hamann\thanks{Supported by the Heisenberg-Programme of the Deutsche 
			Forschungsgemeinschaft (DFG Grant HA 8257/1-1).}\\
		Mathematics Institute, University of Warwick\\ Coventry, UK
		\and
		Babak Miraftab \\
		Department of Mathematics, University of Hamburg \\ Hamburg, Germany
	}
	
	\date{\today}
	
	\maketitle
	\begin{abstract}

We prove that every graph has a canonical tree of \td s that distinguishes all principal
tangles (these include the ends and various kinds of large finite
dense structures) efficiently.

Here `trees of \td s' are a slightly weaker notion than `\td s' but 
much 
more well-behaved than `tree-like metric spaces'. This theorem is best possible in the sense that 
we 
give an example that `trees of \td s' cannot be strengthened to 
`\td s' 
in the above theorem.

This implies results of Dunwoody and Kr\"on as well as of Carmesin, Diestel, Hundertmark and 
Stein. Beyond that for locally finite graphs our result gives for each $k\in\N$ canonical \td s 
that distinguish all $k$-distinguishable ends efficiently.

\end{abstract}
	
	\section{Introduction}

Automorphisms-group invariant \td s\footnote{See Section \ref{sec_Prelim} for a definition.} of 
infinite graphs are applied to study groups via their Cayley graphs (e.\,g.\ Kr\"on 
\cite{CuttingUp-Revisited}, see also Dunwoody and Kr\"on \cite{VertexCuts}, for the proof of 
Stallings' 
Theorem) or other highly symmetric structures; such as \cite{HLMR,miraftab2019splitting}. For 
applications in structural graph theory or matroid theory where canonicity 
does not play a role, we refer the reader to \cite{topo_ends}. 

\vspace{.3cm}

Often it is measured how well a \td\ displays the rough structure of a graph by the 
highly connected substructures it separates. In our context, the most important highly connected 
substructures are the ends (or more generally the tangles, see below). So here we are interested in 
\td s separating lots of ends. 

When do such \td s exist canonically; that is, invariant under the group of 
automorphisms of the graph\footnote{We remark that we actually construct tree-decompositions that are canonical in a stronger sense. Hence we update our definition of canonical later on in the paper.}? The purpose of this paper is to 
answer this question. On one hand, we 
prove a general decomposition result that implies the existence of such \td s in 
special cases. On the other hand, we complement this with many counterexamples to various related 
conjectures, explaining why we believe that this theorem answers this question in a best possible way. 

\vspace{.3cm}

Given a graph $G$, let $\mu(G)$ be the minimal size of a separator of $G$ separating two ends. 
Dunwoody and Kr\"on \cite{VertexCuts} showed under `mild' additional assumptions that every graph 
has a canonical \td\ separating 
any two ends separable by at most $\mu(G)$ vertices. And they also provided an example of a graph 
that does not have a canonical \td\ separating all ends, see Figure \ref{fig_NoTree}. 
We remark that this graph is not locally finite. Still there are such examples for locally finite graphs, 
see Example~\ref{exam_LocFin} below, but they have to be more exotic in the sense that 
they need to have for every number $k$ two ends that cannot be separated by at most $k$ vertices 
(compare Corollary \ref{intro_loc_fin} below).

\begin{figure}[htpb]
\centering
\begin{tikzpicture}
\draw[<->,>=latex'] (0,0)--(9,0);
\foreach \i in {1,2,...,8}{
\draw (\i,0) node [circle,fill, inner sep=2pt] {};
\draw (4,1) node [circle,fill, inner sep=2pt] {};
\draw (4,2) node [circle,fill, inner sep=2pt] {};
\draw[->,>=latex'] (4,0)--(4,2.5);
}
\foreach \i in {1,2,...,8}{
\draw (4,1)--(\i,0);}
\end{tikzpicture}
\caption{The depicted graph is obtained from the disjoint union of a ray with a double ray by 
joining the starting vertex of the ray to all vertices on the double ray. This graph has 
no canonical \td\ distinguishing all its ends.}
\label{fig_NoTree}
\end{figure}

\vspace{.3cm}

These two examples may seem counter-intuitive as one might expect to be able to obtain a single 
canonical \td\ by just iterating the result of Dunwoody and Kr\"on as follows. 
Starting with a \td\ of Dunwoody and Kr\"on, we apply the theorem again to each part 
of that \td, and then to each part of that and so on. 
There are some technical aspects to consider when doing this approach, for example one has to 
consider the torsos of the parts and not the parts themselves and one has to take extra care when 
constructing the previous \td s to not spoil a later one. All this put aside for now, 
the above examples show that this plan cannot work.

We have the following perspective on this. 
Intuitively speaking, we define a \emph{tree of \td s} to be a collection of 
all these iterative \td s -- before any sticking together takes place, see Figure \ref{treetd}. The 
main result of this paper is 
that trees of \td s separating all the ends always can be constructed 
canonically.

\begin{figure}[htpb]  
\begin{center}
\includegraphics[height=4cm]{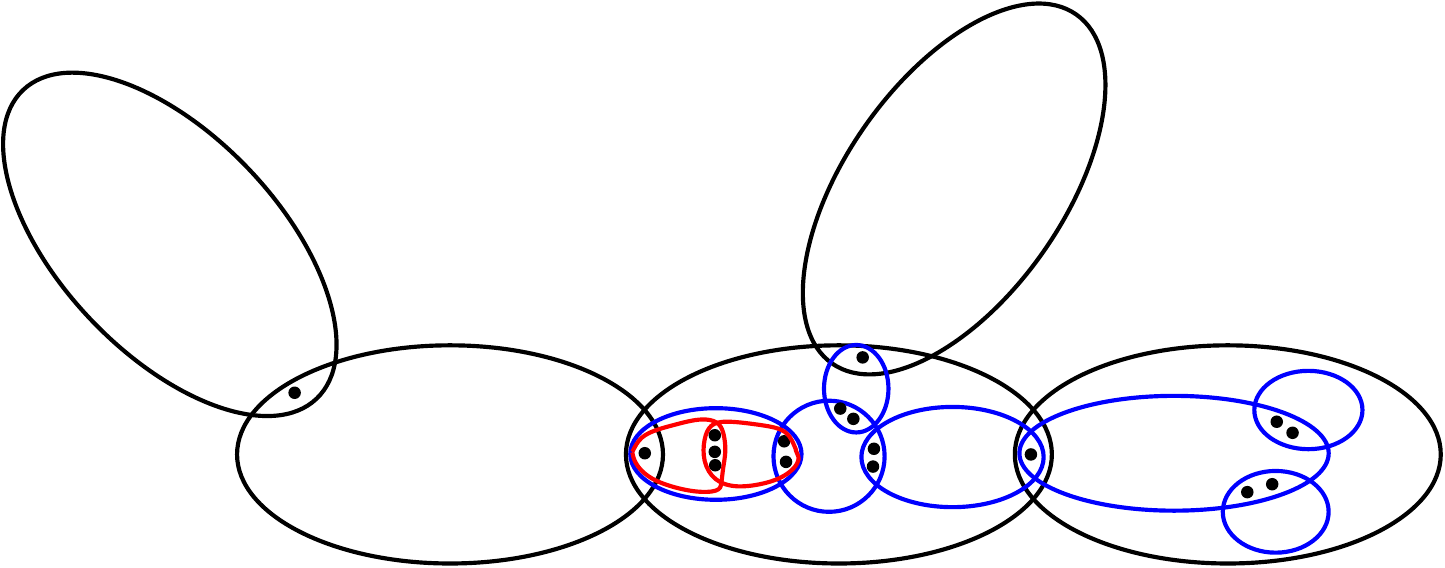}
\caption{A \ttd. Trees of \td s can be thought of as a family of \td s, where each part of one of these \td s is refined by another \td s. For finite graphs, these can be canonically combined into a single \td. The main result of this paper informally says that the tangle tree theorem of Robertson and Seymour can be extended to infinite graphs, in that we get a canonical \ttd\ (that is, a \td\ that is \lq well-defined up to gluing\rq) distinguishing the principal tangles.}\label{treetd}
\end{center}
\end{figure}

Hence the above-mentioned obstructions can only occur in the gluing process from trees of 
\td s to a single \td. More precisely, we can understand the first 
example of Figure \ref{fig_NoTree} as showing that we can not always stick two 
\td s together -- in a canonical way; Example~\ref{exam_LocFin} can be understood to 
show that we can not always stick together infinitely many \td s at once. 

Our main result is the following. Roughly speaking, a \td, or more generally a tree 
of \td s, \emph{distinguishes the set of ends efficiently} if for any two ends it 
contains a separator of minimal size separating the two ends.  
	
	\begin{thm}\label{intro_simple_version}
Every graph has a canonical tree of \td s that distinguishes the set of ends 
efficiently. 
	\end{thm}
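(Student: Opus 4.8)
The plan is to build the canonical \ttd\ by a transfinite refinement process that mirrors the naive iteration of Dunwoody--Kr\"on alluded to above, but carried out in the right category (trees of \td s rather than \td s), so that no gluing is ever attempted. First I would set up the relevant notion of \emph{principal tangle} and establish that for each integer $k$ the set of separations of order $k$ that efficiently distinguish some pair of principal tangles admits a canonical nested subset closed under the automorphism group --- this is the finite-order tangle-tree machinery of Robertson--Seymour in the form sharpened by Carmesin--Diestel--Hundertmark--Stein, applied one order at a time. Running this for $k=1,2,3,\dots$ produces, for each part $P$ of the \td\ obtained at stage $k$, a canonical \td\ of the torso of $P$ using separations of order $k+1$; collecting all of these \td s into one indexed family, with the incidence ``part of the stage-$k$ \td\ refined by the stage-$(k+1)$ \td'', is exactly the \ttd\ we want. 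The point is that at order $k+1$ we only ever separate tangles living inside a single part, so the canonicity obstruction (needing to glue two or infinitely many \td s) never arises; it is precisely the gluing that we refuse to perform.

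The key steps, in order, are: (1) define trees of \td s and the ``distinguishes efficiently'' property so that a \ttd\ is evaluated pointwise on pairs of ends by the minimal-order separator it contains somewhere along the refinement path; (2) prove a single-order canonical splitting lemma: given a graph (or a torso) $H$ and $k\in\N$, there is an $\Aut(H)$-invariant \td\ of $H$ of adhesion $\le k$ all of whose parts have no two principal tangles distinguished by a separation of order exactly $k$, and which efficiently distinguishes every pair that \emph{is} so distinguished --- here the main input is that the relevant finite-order separations form a profinite/tree-like system and one can extract a canonical nested subset (the ``$\kappa$-effective'' separations, cf.\ the $\Reffk$ notation the paper has set up); (3) iterate over $k$, feeding the torsos of the stage-$k$ parts into step (2) at order $k+1$, and verify that refining never destroys earlier separations (monotonicity: a separation of order $\le k$ remains a legitimate separation of the torso); (4) check that the resulting family is a \ttd\ in the formal sense and that, because every pair of ends is eventually separated at the least order at which it can be, the \ttd\ distinguishes the set of ends efficiently; and (5) verify that the whole construction commutes with automorphisms, which is immediate since each stage was chosen canonically and the indexing is intrinsic.

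The main obstacle I expect is step (2) in the case of ends that are \emph{not} $k$-distinguishable for any fixed $k$ but are still separated by \emph{some} finite separator --- equivalently, handling the passage from ``canonical at each fixed order'' to ``canonical overall''. Two difficulties hide here. First, within a single order $k$ one must choose the nested system of separations canonically even when infinitely many separations of order $k$ are in play and they do not automatically nest; the resolution is to pass to a canonical sub-system (taking only separations that are ``efficient'' for the tangle pair they distinguish and then closing down under a canonical nestedness-forcing operation, which is where the bulk of the technical work and the hypotheses on the separation system are used). Second, one must ensure the transfinite bookkeeping terminates in the right sense: each end is assigned to a unique part at each stage, the nested sequence of parts containing a fixed end has an intersection that is ``end-free'' below the relevant order, and the refinement path along which a pair of ends gets separated is well-defined --- this requires a compactness/König-type argument using that separators are finite. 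Once these are in place, assembling the family and reading off efficiency and canonicity is routine, and the counterexamples promised later in the paper confirm that one genuinely cannot push beyond a \ttd\ to a single \td.
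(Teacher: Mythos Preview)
Your outline is the paper's own strategy: iterate over the order $k$, at each stage extract a canonical nested system of order-$k$ separations distinguishing the profiles still living together, pass to torsos, and record the whole family as a tree of tree-decompositions rather than attempting to glue. So the architecture is right.

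There is one genuine technical gap in your step~(2). As stated --- ``given a graph (or a torso) $H$ and $k\in\N$, there is an $\Aut(H)$-invariant \td\ of adhesion $\le k$ \dots'' --- the lemma is not provable without an extra hypothesis, and the paper does not prove it in that generality. The obstruction is \emph{degenerate components}: if $(A,B)$ is an efficient order-$k$ separation but some component $C$ of $H-(A\cap B)$ has $N(C)\subsetneq A\cap B$, then the passage from an arbitrary efficient separation to a \lc\ one of the same order can fail, and without left-connectedness the finiteness needed for the minimum-crossing-number argument (your ``canonical nestedness-forcing operation'') breaks down. The paper handles this by inserting, at every order, a separate canonical preprocessing \td\ (Proposition~\ref{prop_makeNice}) that strips off all degenerate components into leaves of a star; only the central torso, now \emph{well-separable}, is fed into the real distinguishing step (Theorem~\ref{thm_canon_td}). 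This is why the tree in Theorem~\ref{thm_ttd} alternates: odd levels $2k+1$ carry the preprocessing \td s of adhesion $<k+1$, even levels $2k$ carry the $k$-balanced distinguishing \td s. Your plan will go through once you build this two-phase rhythm into step~(2); without it the single-order lemma is false as stated.

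A minor point: the ``compactness/K\"onig-type argument'' you anticipate for step~(4) is not needed. Efficiency is inherited level by level via Proposition~\ref{prop_ProfilesInTD}\,(\ref{itm_ProfilesInTD3}) and~(\ref{itm_ProfilesInTD4}): a separation of $G$ distinguishing two profiles efficiently induces one of the same order in the torso where both profiles still live, so the pair is eventually split at exactly the right level with no limit argument.
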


While the above mentioned result of Dunwoody and Kr\"on is immediately implied by Theorem 
\ref{intro_simple_version}, our theorem also has the following consequence.

\begin{coro}\label{intro_loc_fin}
 For every $k\in \mathbb{N}$, every locally finite graph has a canonical \td\ that 
distinguishes any two ends distinguishable by at most $k$ vertices efficiently. 
\end{coro}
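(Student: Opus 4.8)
The plan is to deduce the corollary from Theorem~\ref{intro_simple_version} by truncating the canonical \ttd\ it provides at \lq level $k$\rq, and checking that for a locally finite graph this truncation is a genuine canonical \td.

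Fix $k\in\N$ and a locally finite graph $G$, and let $\mathcal{T}$ be a canonical \ttd\ of $G$ distinguishing the set of ends efficiently, as given by Theorem~\ref{intro_simple_version}. Let $S$ be the set of all separations of $G$ of order at most $k$ that arise from one of the \td s occurring in $\mathcal{T}$; here a separation of the torso of a part of a \td\ induces a separation of $G$ of the same order, because the adhesion sets are cliques in the torso and therefore are not split, so $S$ is well defined, and it is $\Aut(G)$-invariant since $\mathcal{T}$ is canonical. I would first record that $S$ distinguishes efficiently any two ends $\omega_1,\omega_2$ that can be separated by at most $k$ vertices: if $m\le k$ is the least order of a separator of $G$ between $\omega_1$ and $\omega_2$, then efficiency of $\mathcal{T}$ supplies a separation of order $m$ occurring in some \td\ of $\mathcal{T}$ that separates $\omega_1$ from $\omega_2$, and this separation lies in $S$.

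The crucial step is to turn $S$ into a nested, $\Aut(G)$-invariant system of separations, possibly after passing to a canonical subset via a submodularity-based uncrossing argument. Two separations stemming from the same \td\ of $\mathcal{T}$ are nested by the definition of a \td; for separations stemming from different \td s one must exploit the hierarchical structure of a \ttd\ -- each of its \td s refines a single part (torso) of another -- to propagate nestedness across the levels, and I expect this to be the main obstacle. This is also where both hypotheses enter: fixing $k$ keeps all orders under consideration bounded, so Example~\ref{exam_LocFin} (which needs, for every $k$, two ends not separable by $k$ vertices) cannot interfere, and local finiteness excludes the phenomenon of Figure~\ref{fig_NoTree}, where even two ends separable by two vertices admit no canonical separating set; together these should ensure that the truncated system closes up into a single \td. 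Once a nested $\Aut(G)$-invariant system of separations of order at most $k$ is in hand, the standard construction of a \td\ from a nested separation system yields a canonical \td\ $(T,\mathcal{V})$ of $G$ of adhesion at most $k$, canonical because that construction is functorial in $G$. Finally, each separation kept in $S$ is an adhesion separation of $(T,\mathcal{V})$, so by the second paragraph $(T,\mathcal{V})$ contains an efficient separator between any two ends separable by at most $k$ vertices and hence distinguishes all such pairs efficiently, as required. An alternative to using Theorem~\ref{intro_simple_version} as a black box would be to re-run its proof while tracking separation orders and stopping at $k$; either way the bounded-order nestedness is the heart of the matter.
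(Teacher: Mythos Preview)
Your outline --- start from the canonical \ttd\ of Theorem~\ref{thm_ttd}, truncate at order~$k$, and glue into a single \td\ --- is the right shape, and you are correct that only the first $2k+1$ levels contribute. But you have misidentified the obstacle. Nestedness across levels is not where local finiteness is needed: a separation of a torso, lifted canonically to~$G$, lies within one part of the parent \td\ and is therefore nested with all of that \td's separations; no submodular uncrossing is required, and your set~$S$ is already nested and $\Aut(G)$-invariant. The genuine gap is hidden in the sentence ``the standard construction of a \td\ from a nested separation system yields a canonical \td'': that construction (Theorem~\ref{thm_nestedToTD}) requires property~(\ref{property_Finiteness}), that between any two comparable separations of~$S$ only finitely many others lie. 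You never verify this, and it is not automatic: for a single adhesion set $V_t\cap V_{t'}$ of the outer \td\ there may be many separations of the inner \td\ of the torso at~$t$ whose separators all contain $V_t\cap V_{t'}$, and one must know that this family is finite. That is precisely where local finiteness enters.

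The paper's argument (Proposition~\ref{prop_TDrefinement}, feeding Theorem~\ref{thm_LocFinMain1}) sidesteps separation systems and works directly with trees. To glue the torso's tree $T^t$ into the outer tree~$T$, one must choose, for each edge $tt'$ of~$T$, a canonical attachment node in~$T^t$. The candidates form the subtree $S^{tt'}$ of nodes of~$T^t$ whose parts contain $V_t\cap V_{t'}$. Local finiteness, via Proposition~\ref{prop_corTW} (only finitely many tight separations of bounded order through a given vertex), forces $S^{tt'}$ to be \emph{finite}, and a finite tree has a canonical central vertex or edge --- that is the attachment point. Iterating this refinement $2k+1$ times yields the desired \td. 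This is exactly what breaks in Figure~\ref{fig_NoTree}, where the analogue of $S^{tt'}$ is a two-way infinite path with no canonical centre; your remark that local finiteness ``excludes the phenomenon of Figure~\ref{fig_NoTree}'' is correct in effect, but you have not located the mechanism, and your proposed route through nested separation systems would still have to confront~(\ref{property_Finiteness}) head-on.
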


Tangles were invented to describe dense substructures in finite graphs and play a key role in the 
Graph Minor Theory of Robertson and Seymour \cite{GMX}. It is a 
simple observation that ends of infinite graphs are also tangles, see Section~\ref{sec_TTD} below 
for details. In 
the context of our proof it is technically slightly easier to work with the more general notion of principal
tangles than ends. In fact we prove the following generalisation of Theorem 
\ref{intro_simple_version}.

	\begin{thm}\label{intro_simple2}
Every graph has a canonical tree of \td s that distinguishes the set of principal tangles (or 
even more generally, the set of robust profiles\footnote{See Section \ref{sec_Prelim} for a 
definition of robust 
profiles.}) efficiently. 
	\end{thm}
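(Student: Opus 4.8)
The plan is to construct the canonical \ttd\ by an iterative application of a canonical one-step decomposition theorem, but organised so that no gluing is ever performed. First I would isolate the key building block: a canonical \td\ of (the torso of) a graph along separations of minimal order distinguishing a maximal set of robust profiles -- essentially the Carmesin--Diestel--Hundertmark--Stein canonical tree-decomposition machinery, phrased for profiles rather than ends. The inductive parameter is the minimal order $k$ of a separation distinguishing two of the profiles under consideration; for fixed $k$ one gets a single canonical \td\ of the graph whose adhesion sets have order $\le k$ and which distinguishes efficiently all pairs of profiles distinguishable at order $k$. The idea is then to iterate this over $k = 0, 1, 2, \dots$, applying the order-$(k{+}1)$ decomposition not to the original graph but to the torso of each part produced at stage $k$; the tree of \td s is precisely the record of this whole process, one \td\ per part, linked by the part-to-new-\td\ relation.

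The main steps, in order, would be: (1) make precise the notion of a tree of tree-decompositions (a tree $T$ of ``nodes'', each node carrying a \td\ of a torso, with a gluing relation between parts of one node's \td\ and child nodes), together with the notion of its efficiently distinguishing a set of profiles; (2) prove the one-step canonical decomposition theorem for a single torso at a fixed adhesion order, i.e.\ that for each $k$ there is an $\Aut$-invariant \td\ of adhesion $\le k$ whose parts each contain at most one profile of order $> k$ and which is efficient for the order-$k$-distinguishable pairs -- here one uses the theory of nested separation systems / canonical tree sets coming from submodularity of the order function, taking an $\Aut$-invariant set of ``tight'' separations of order $k$ that is nested after a standard uncrossing argument; (3) set up the induction on $k$: given the stage-$k$ tree of \td s, for each part $P$ with torso $\tau(P)$ the profiles living in $P$ induce a profile system on $\tau(P)$, apply step (2) at order $k{+}1$ to $\tau(P)$, attach the resulting \td\ as the children of the node for $P$, and check that canonicity is preserved because each $\tau(P)$ is built canonically and the automorphism group permutes the parts; (4) take the limit over all $k$ to obtain a single tree of \td s, and verify it distinguishes \emph{all} robust profiles (in particular all principal tangles, and all ends) efficiently, using that any two distinct robust profiles are distinguished by some separation and that the decomposition at the relevant order does separate them with a minimum-order separation, the key point being robustness is exactly what guarantees the uncrossing does not destroy the separating separation.

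I expect the main obstacle to be step (3), specifically controlling the interaction between successive stages so that the decompositions ``nest'' correctly and canonicity is not spoiled. The subtle issue -- flagged in the introduction's discussion of the naive iteration -- is that a separation used at stage $k{+}1$ inside a torso $\tau(P)$ need not correspond to a separation of the original graph of the same order, and conversely a minimum separator for two deeply nested profiles might only become visible after passing to torsos; one must show that working in the torso is legitimate, i.e.\ that orders and profiles transfer correctly between $G$ and $\tau(P)$, and that the separators chosen at different stages are automatically nested across stages (this is why a tree of \td s, rather than a single \td, is the right output -- the cross-stage separators need not be nested in $G$, only within each torso). A secondary technical point is the passage to the limit: one must argue that the union over all $k$ of the stage-$k$ trees of \td s is well-defined, that every robust profile is eventually ``pinned down'' to a single end-part, and that an end of $G$ is indeed captured, which requires checking that the process does not run into infinitely much splitting at a bounded order without isolating the profile -- this is handled by the efficiency clause at each fixed order together with the fact that robust profiles form a well-behaved (closed) system.
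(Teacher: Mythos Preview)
Your overall architecture is correct and matches the paper's: iterate a canonical one-step decomposition over increasing separator orders~$k$, applying each new step to the torsos produced by the previous one, and record the whole process as a tree of tree-decompositions rather than attempting to glue. Proposition~\ref{prop_ProfilesInTD} confirms your intuition that orders and profiles transfer cleanly between $G$ and torsos, so your worry in step~(3) about separators ``only becoming visible after passing to torsos'' is not in fact an obstacle: an efficient distinguisher in the torso lifts to one of the same order in~$G$ and vice versa.

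The one genuine technical point you have missed is the treatment of \emph{degenerated components}. Your step~(2) proposes to take a canonical nested set of tight separations of order~$k$ via a ``standard uncrossing argument''. The difficulty is that when you replace two crossing relevant separations by a corner separation, the corner need not be left-connected; passing to a component (to restore left-connectedness) can strictly decrease the order if some component of $G-(A\cap B)$ has neighbourhood a proper subset of $A\cap B$. When that happens the uncrossing machinery of Lemmas~\ref{lem_compsep} and~\ref{lem_separation_exists}, and the nestedness verification inside Theorem~\ref{thm_canon_td}, break down. The paper handles this by inserting an extra preprocessing level for each~$k$: before running the order-$k$ decomposition it first applies Proposition~\ref{prop_makeNice}, a canonical star-decomposition of adhesion \emph{strictly less than}~$k$ that strips off all degenerated components, leaving a single torso that is \emph{well-separable}. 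This is why the tree in Theorem~\ref{thm_ttd} has two levels per value of~$k$ (odd levels for the well-separability preprocessing, even levels for the actual $k$-balanced distinguishing decomposition), not one. Without this step your one-step theorem in~(2) is not available as stated; with it, the rest of your plan goes through essentially as the paper does it.
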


A side-effect of our general approach is that this theorem also implies recent works concerning 
\td s of finite graphs by Carmesin, Diestel, Hundertmark and Stein~\cite{confing} and by 
 Diestel, Hundertmark and Lemanczyk \cite{DHL-Profiles}.

	\vspace{.3cm}
	
	The remainder of this paper is structured as follows.
	In Section~\ref{sec_Prelim}, we state some main definitions (such as profiles or 
	separations) and prove some basic results on separations of graphs.
	In Section~\ref{sec_TD} we explore the relationship between nested sets of separations and 
\td 
s. Its 
	content is based mainly on Carmesin, Diestel, Hundertmark and Stein~\cite{confing}, other 
parts of the paper are more similar to \cite{topo_ends}.
	Section~\ref{sec_Parts} focuses on how profiles of graphs induce profiles in the torsos of 
its \td s. 
	In Section~\ref{sec_k} we prove our key auxiliary result, which is the existence of nested 
sets of separations distinguishing certain profiles under some mild assumption on the graph.
	In Section~\ref{sec_TTD} we prove our main theorem. After that we discuss the 
connections between profiles and ends, 	$k$-blocks and 	tangles.
	In Section~\ref{sec_LocFin} we deduce our aforementioned results on 
locally finite graphs, and then we have some final remarks.
	
	\section{Sets of separations}\label{sec_Prelim}
	
	For basic notations and terminology for graphs, we refer readers to \cite{DiestelBook17}.
	Let	$G$ be a graph. 
	A \emph{separation} of $G$ is an ordered pair $(A,B)$ of vertex sets such that $G[A]\cup 
G[B]=G$; 
	that is,\ there 
	is no edge between $A\sm B$ and $B\sm A$.
	It is \emph{proper} if neither $(A,B)\leq (B,A)$ nor $(B,A)\leq (A,B)$, that is, if $A\sm 
B\neq\es$ and $B\sm A\neq\es$.
	The \emph{order} of the separation $(A,B)$ is the size of its \emph{separator} $A\cap B$.
	
	For two separations $(A,B)$ and $(C,D)$, we write $(A,B)\leq (C,D)$ if $A\subseteq C$ and 
	$D\subseteq B$.
	It is easy to see that this defines a partial order on the set of all separations.
	A separation $(A,B)$ is \emph{nested} with a separation $(C,D)$, denoted by $(A,B)\parallel 
(C,D)$, 
	if $(A,B)$ is comparable with $(C,D)$ or $(D,C)$.
	If $(A,B)$ is not nested with $(C,D)$ then these two separations \emph{cross}. 
	The \emph{centre} of two separations $(A,B)$ and $(C,D)$ is $A\cap B\cap C\cap D$, their 
four 
	\emph{corners} are the sets $A\cap C$, $B\cap C$, $B\cap D$, and $A\cap D$.
	The corners $A\cap C$ and $B\cap D$ are \emph{opposite} as are the corner $B\cap C$ and 
$A\cap D$.
	Corners are \emph{adjacent} if they are not opposite.
	The \emph{link} between two adjacent corner is their intersection without the centre. 
	For a corner $E\cap F$ with $\{E,E'\}=\{A,B\}$ and $\{F,F'\}=\{C,D\}$, its \emph{interior} 
is 
	$(E\cap F)\sm(E'\cup F')$ and its \emph{corner separation} is the separation $(E\cap 
F,E'\cup F')$.
	We call the corner separations $(E\cap F,E'\cup F')$ and $(E\cup F,E'\cap F')$ 
\emph{opposite}.
	
	A set $P$ of separations of a graph $G$ is a \emph{profile} if it satisfies the following 
	conditions.
	\begin{enumerate}[(P1)]
		\item $P$ is \emph{consistent}, i.\,e.\ if for every two separations $(A,B)$ and 
$(C,D)$ with $(C,D) 
		\leq (A,B)$ and $(A,B)\in P$ we have $(D,C) \notin P$;
		\item if $(A, B), (C,D) \in P$, then $(B\cap D,A\cup C)\notin P$.
	\end{enumerate}
	If two separations do not form a consistent set we say that they \emph{point away} from 
each 
other.
A profile $P$ is \emph{principal} if for every family $((A_i,B_i))_{i\in I}$ in~$P$ with $A_i\cap B_i=A_j\cap B_j$ for all $i,j\in I$, we have $\bigcap_{i\in I} (B_i\sm A_i)\neq\es$.
	A profile $P$ of~$G$ is a \emph{$k$-profile} if all separations in~$P$ have order less than 
$k$ and 
	if for every separation $(A,B)$ of~$G$ of order less than $k$, either $(A,B) \in P$ or 
$(B,A)\in P$.
Note that in a principal $k$-profile $P$ there is for every vertex set $S$ of size less than~$k$ some component $C$ of $G-S$ such that $(V(G)\sm C,C\cup S)\in P$.
	
	In finite graphs, (P2) ensures that profiles cannot hide in small separators.
	But this fails for infinite graphs if we ask only (P2) but not that the profiles are principal as the following example shows.
	
	\begin{exam}
		Let $G$ be the countably infinite star with vertex set $\{c,v_1,v_2,\ldots\}$, 
where 
$c$ is the center of the 
		star.
		Let $\Fcal$ be a non-principal ultrafilter on~$\N$.
		Let $P$ be the set consisting of the separations $(A_F,B_F)$, where 
$B_F:=\{c\}\cup\{v_i\mid i\in 
		F\}$ and $A_F:= (V(G)\sm B_F)\cup\{c\}$, together with the separations $(A,V(G))$ 
with $|A|\leq 1$.
		It follows from the definition of ultrafilters that $P$ is a profile but not principal.
	\end{exam}
	
	A separation $(A,B)$ \emph{distinguishes} two profiles $P$, $P'$ if $(A,B) \in P$ and
	$(B,A)\in P'$ or vice versa and it distinguishes the profiles \emph{efficiently} if there 
is 
no 
	separation of smaller order distinguishing $P$ and $P'$.
	Two profiles are \emph{$\ell$-distinguishable} if a separation $(A,B)$ of order at most 
$\ell$ 
	distinguishes them.
	For a set $\Pcal$ of profiles, let $\kappa(\Pcal,G)$ denote the minimum order of 
separations 
	separating two profiles of~$\Pcal$.
	
	A separation of a graph~$G$ is \emph{relevant} (for some $k\in\N\cup\{\infty\}$ and with 
respect to~$\Pcal$) if it has finite order of at most~$k$ and distinguishes two profiles in~$\Pcal$.
	It is easy to see that relevant separations $(A,B)$ are proper.
	We denote by $\Rcal(k,\Pcal,G)$ the set of relevant separations of~$G$ with respect 
to~$\Pcal$.
	We use $\Rcal(k,\Pcal)$ or $\Rcal(k)$ if $G$ and $\Pcal$ are obvious from the context.
	Let $\Reffk(\Pcal,G)$ be the set of all  separations of order $\kappa(\Pcal,G)$ 
distinguishing two 
	profiles of~$\Pcal$ efficiently and for two profiles $P,P'\in\Pcal$ set 
	\[
	\Reff(P,P'):=\left\{ (A,B)\in\Reffk(\Pcal,G)\mid (A,B)\text{ distinguishes }P,P'\text{ 
		efficiently}\right\}.
	\]

	A component $C$ of $G-(A\cap B)$ for a separation $(A,B)$ of~$G$ is \emph{degenerated} if 
$N(C)$ is a proper subset of $A\cap B$.
	We call a separation \emph{degenerated} relative to $(A,B)$ if it is of the form $(C\cup 
N(C),V(G)\sm C)$ where $C$ is a degenerated component of $G-(A\cap B)$.
	The \emph{degenerator} of a set $\Scal$ of separations is the set of separations that are 
degenerated relative to some separation in~$\Scal$.
	We denote the degenerator of $\Rcal(k,\Pcal,G)$ by $\Scal(k,\Pcal,G)$ and write 
$\Scal(k,\Pcal)$ or $\Scal(k)$ if $G$ and $\Pcal$ are obvious from the context.
	We call $G$ \emph{well-separable} (with respect to a set $\Pcal$ of profiles) if for every 
separation $(A,B)$ of order $\kappa(\Pcal,G)$ no component of $G-(A\cap B)$ is degenerated, that 
is, 
if $\Scal(\kappa(\Pcal,G),\Pcal,G)=\es$.
	A separation $(A,B)$ is \emph{\lc} if $A\sm B$ is connected.
	
	For $n\in\N$, a profile $P$ is \emph{$n$-robust} if for every $(A,B)\in P$ and every 
separation $(C,D)$ of order at most~$n$ 
	the following holds: if $(B\cap C, A\cup D)$ and $(B\cap D, A\cup C)$ both have order less 
than 
	$|A\cap B|$, then one of those corner separations does not lie in~$P$.
	It is \emph{robust} if it $n$-robust for every $n\in\N$.
	
	A separator separates two vertices \emph{minimally} if no other separator of smaller size 
separates them, too.
	The following result is by Halin~\cite{halin91}.
	
	\begin{lemma}\label{lem_minimally}{\rm\cite[2.4]{halin91}}
		Let $G$ be a graph, $u,v\in V(G)$ and $k\in\N$.
		Then there are only finitely many separators of size at most $k$ separating $u$ and 
$v$ 
		minimally.\qed
	\end{lemma}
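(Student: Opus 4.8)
The plan is to prove the lemma by induction on $k$, keeping the two vertices $u$ and $v$ fixed and quantifying over all graphs; so the inductive statement reads: every graph containing $u$ and $v$ has only finitely many minimal $u$-$v$ separators of size at most $k$, where a separator $S$ separates $u$ and $v$ minimally precisely when $S$ is a $u$-$v$ separator and no proper subset of $S$ is. For $k=0$ the only candidate is $\es$, so the statement is immediate; it is likewise trivial when $u=v$ or when $u$ and $v$ lie in different components (then $\es$ is the unique minimal $u$-$v$ separator). Hence from now on we may assume that $u\neq v$ lie in a common component.

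For the inductive step I would assume the statement for $k-1$ and suppose, for a contradiction, that $G$ has infinitely many minimal $u$-$v$ separators of size at most $k$. First I would fix a $u$-$v$ path $P$ in $G$; since $P$ is finite and every $u$-$v$ separator, being disjoint from $\{u,v\}$, must contain an internal vertex of $P$, the pigeonhole principle yields a single vertex $x\in V(P)\setminus\{u,v\}$ lying in infinitely many of these separators.

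The crux is then the passage to the graph $G-x$: for every minimal $u$-$v$ separator $S$ of $G$ with $x\in S$, I claim that $S\setminus\{x\}$ is a minimal $u$-$v$ separator of $G-x$ of size at most $k-1$. It separates $u$ from $v$ in $G-x$ because $(G-x)-(S\setminus\{x\})=G-S$, and if some $T\subsetneq S\setminus\{x\}$ separated $u$ from $v$ in $G-x$, then $T\cup\{x\}$ would be a $u$-$v$ separator of $G$ and a proper subset of $S$, contradicting the minimality of $S$. Since $S\mapsto S\setminus\{x\}$ is injective on the separators containing $x$, the graph $G-x$ would then possess infinitely many minimal $u$-$v$ separators of size at most $k-1$, contradicting the induction hypothesis and completing the induction. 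I expect this reduction step to be the only point requiring care: one must verify that deleting the shared vertex $x$ simultaneously preserves minimality, lowers the size bound by one, and keeps the resulting separators pairwise distinct; everything else is routine bookkeeping. (If one instead reads ``minimally'' as ``of least possible order'', the very same induction goes through verbatim, with ``minimal'' replaced by ``of minimum order'' throughout.)
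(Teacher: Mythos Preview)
The paper does not prove this lemma at all; it is quoted from Halin~\cite{halin91} with a bare \qed. Your inductive argument is correct and is essentially the standard elementary proof: pigeonhole on a fixed $u$--$v$ path to find a common vertex $x$, then pass to $G-x$ and invoke the hypothesis for $k-1$. The verification that $S\mapsto S\setminus\{x\}$ sends inclusion-minimal separators of $G$ containing $x$ injectively to inclusion-minimal separators of $G-x$ is exactly the point that needs care, and you handle it cleanly.

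One remark on terminology: the paper's definition of ``separates minimally'' is the minimum-order reading (``no other separator of smaller size separates them''), not the inclusion-minimal one you take as primary. Your closing parenthetical already anticipates this, and indeed the same induction works verbatim; alternatively, since every minimum-order separator is automatically inclusion-minimal, the version you prove immediately implies the one the paper states.
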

			
	For a set $\Scal$ of separations we define the property:
	\begin{equation}\tag{$*$}\label{property_Finiteness}
	\begin{minipage}[t]{0.9\textwidth}
	\em
	For all $(A,B),(C,D)\in \Scal$, there are only finitely many $(E,F)\in\Scal$ with $(A,B)< 
(E,F)< 
	(C,D)$.
	\end{minipage}
	\end{equation}
		
	\begin{lemma}\label{lem_make-tree}
		Let $G$ be a graph and let $k\in \N$.
		Then any nested set of \lc\ separations of order at most $k$ has property $(*)$.
	\end{lemma}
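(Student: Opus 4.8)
The plan is to argue by contradiction. Suppose that for some $(A,B),(C,D)\in\Scal$ there are infinitely many $(E,F)\in\Scal$ with $(A,B)<(E,F)<(C,D)$. Since $<$ is transitive we may assume $(A,B)<(C,D)$, so that $A\sm B$ and $C\sm D$ are non‑empty connected vertex sets, and the partial order immediately gives $A\sm B\sub E\sm F\sub C\sm D$ (and dually $D\sm C\sub F\sm E\sub B\sm A$) for every in‑between $(E,F)$. A short check against the definition of nestedness shows that, because $A\sm B\neq\es$, any two of these in‑between separations are comparable, so they form a chain; being infinite, it contains a strictly monotone $\omega$‑sequence, and after swapping the roles of the two sides if necessary we obtain $(E_1,F_1)<(E_2,F_2)<\cdots$ with left sides $L_i:=E_i\sm F_i$ nested, increasing, connected, and sandwiched as $A\sm B\sub L_i\sub C\sm D$. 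Passing to a further subsequence I may assume the order $|E_i\cap F_i|$ is a constant $\ell\le k$; then in this chain a separation is determined by its left side (two members of a nested set with equal left side and equal order must agree), so it suffices to bound the number of distinct $L_i$.

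Next I dispose of the case where $(L_i)$ is eventually constant, say equal to $L$: writing $S_i:=E_i\cap F_i$ one has $E_i=L\cup S_i$ and $F_i=(V(G)\sm L)\cup S_i$ with $L\cap S_i=\es$, so the chain inequalities $E_i\sub E_{i+1}$ translate into $S_i\sub S_{i+1}$; since $|S_i|=\ell$, at most $\ell+1$ distinct $S_i$ — hence at most $\ell+1$ distinct $(E_i,F_i)$ — can occur, a contradiction. So I may assume $L_i\subsetneq L_{i+1}$ for all $i$; in particular $\ell\geq 1$ (an order‑$0$ left‑connected separation has a connected left side that is a whole component, and distinct components cannot be nested), and for each $i$ one has $N(L_i)\sub S_i$ because $L_{i+1}$ is connected and meets $L_i$.

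For the remaining case I fix $a\in A\sm B$ (on the left of every $(E_i,F_i)$) and $b\in D\sm C$ (on the right of every $(E_i,F_i)$), so that every $S_i$ separates $a$ from $b$ and $|S_i|=\ell\le k$; the goal is to invoke Halin's Lemma~\ref{lem_minimally}. The key device is the limit separator $S_\infty:=\bigcap_{i\geq n}S_i$, which is well defined for large $n$ since along the chain a vertex that leaves the separator never re‑enters it (as $E_i$ grows and $F_i$ shrinks); a pigeonhole argument on the finitely many vertices of any $a$–$b$ path shows $S_\infty$ still separates $a$ from $b$, and because $L_i$ eventually avoids $S_\infty$ the left sides are confined to a single component $K$ of $G-S_\infty$. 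If $S_\infty\neq\es$ one passes to $G[K]$, where the $L_i$ become the left sides of left‑connected separations of order $\le k-|S_\infty|<k$ lying strictly between a fixed pair — the first of them and the separation with left side $L_\infty:=\bigcup_i L_i$, whose order is again $\le k-|S_\infty|$ by the same stabilisation argument — so an induction on $k$ yields a contradiction; and one rules out $S_\infty=\es$ by showing that it would force the $L_i$ to exhaust the component of $a$, whence a left‑connected separation whose left side contains all $L_i$ would have that whole component on its left, contradicting minimality/boundedness of $\ell$. The step I expect to be the main obstacle is exactly this organisation of the descent: trapping a genuine lower‑order instance between a fixed pair of separations, and handling the borderline case where the $S_i$ are not minimal $a$–$b$ separators, so that Lemma~\ref{lem_minimally} can be applied at the base of the induction.
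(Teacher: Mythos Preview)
The paper's proof is four lines: once there are infinitely many $(E,F)\in\Scal$ strictly between $(A,B)$ and $(C,D)$, it asserts that left-connectedness forces their separators to be pairwise distinct and to separate any $a\in A\sm B$ from any $b\in D\sm C$ \emph{minimally}, and then invokes Lemma~\ref{lem_minimally} directly. You carry out the distinctness step correctly, and your suspicion that minimality is not automatic is justified; but rather than your elaborate descent, the intended fix is to observe that in the paper's only application (Theorem~\ref{thm_canon_td}) the separations lie in $\Rcal(k,\Pcal)$ for a well-separable graph, so every component of $G-(E\cap F)$ has full neighbourhood $E\cap F$ and minimality is immediate. (In full generality the lemma as stated actually fails: take $G$ to be two disjoint rays, let $(A,B)$ and the $(E_i,F_i)$ cut the first ray at successive vertices, and let $(C,D)$ have the two rays as its sides.)

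Your inductive route via the limit separator $S_\infty$ has a genuine gap. After passing to the component $K$ of $a$ in $G-S_\infty$ you want to bound the restricted separations above by ``the separation with left side $L_\infty:=\bigcup_i L_i$'', but in fact $L_\infty=K$: if $v\in K\sm L_\infty$ were adjacent to $L_\infty$ then $v\in N(L_i)\sub S_i$ for some $i$, and since $v\notin L_j$ for every $j$ your own stabilisation argument gives $v\in S_j$ for all $j\ge i$, hence $v\in S_\infty$, contradicting $v\in K$. So your upper-bound separation is improper and there is no pair in $G[K]$ trapping infinitely many of the restricted separations; the inductive hypothesis cannot be invoked. The case $S_\infty=\es$ is handled too sketchily and suffers from the same problem (indeed $L_\infty$ exhausting the component of $a$ is exactly what happens when $S_\infty\neq\es$ as well). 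The simplest repair is to abandon the induction and follow the paper's line: argue minimality of the (distinct) separators directly, under the additional tightness hypothesis that holds wherever the lemma is used.
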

	
	\begin{proof}
		Let $\Scal$ be a nested set of \lc\ separations of order at most~$k$ and let 
$(A,B),(C,D)\in\Scal$ with $(A,B)\leq (C,D)$.
		Let us suppose that $\Scal$ does not satisfy property (\ref{property_Finiteness}).
		Then there are infinitely many distinct $(E,F)\in\Scal$ with $(A,B)<(E,F)<(C,D)$.
		Since all of them are \lc, all their separators are different and separate any 
vertex in $A\sm B$ from any vertex in $D\sm C$ minimally.
		This contradicts Lemma~\ref{lem_minimally} and shows (\ref{property_Finiteness}).
	\end{proof}
	
	\begin{lemma}\label{lem_adjacent}
		Let $G$ be a graph and let $(A,B)$, $(C,D)$ and $(E,F)$ be three separations such that 
$(A,B)\nparallel(C,D)$.
		Then the following statements hold:
		\begin{enumerate}[{\rm (i)}]
			\item\label{itm_adjacent1}{\rm \cite[Lemma 2.2]{confing}} If $(E,F)$ is 
nested with $(A,B)$ and with 
			$(C,D)$, then every corner of $(A,B)$ and $(C,D)$ is nested with $(E,F)$;
			\item\label{itm_adjacent2} if $(E,F)$ is nested with 
			$(A,B)$, then there are two adjacent corners of $(A,B)$ and $(C,D)$ which 
are nested with $(E,F)$.
		\end{enumerate} 
	\end{lemma}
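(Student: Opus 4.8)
The plan is to prove part~(ii) directly; part~(i) is quoted from \cite{confing} and serves only as background. The underlying observation is that being nested is a symmetric relation that is moreover unchanged if one swaps the two sides of either separation involved (both facts are immediate from the definition, since $\leq$-comparability is merely permuted, not destroyed, by such a swap). Consequently, after possibly replacing $(E,F)$ by $(F,E)$, I may assume that either $(E,F)\leq(A,B)$ or $(A,B)\leq(E,F)$: the two remaining sub-cases of ``$(E,F)$ nested with $(A,B)$'', namely $(E,F)\leq(B,A)$ and $(B,A)\leq(E,F)$, become these two after the swap, and the conclusion to be proved is itself unaffected by such a reversal.

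Suppose first that $(E,F)\leq(A,B)$, i.e.\ $E\subseteq A$ and $B\subseteq F$. I claim the two adjacent corners $B\cap C$ and $B\cap D$ are both nested with $(E,F)$. Their corner separations are $(B\cap C,\,A\cup D)$ and $(B\cap D,\,A\cup C)$, and from $B\cap C\subseteq B\subseteq F$, $B\cap D\subseteq B\subseteq F$ and $E\subseteq A\subseteq A\cup C,\,A\cup D$ one reads off $(B\cap C,A\cup D)\leq(F,E)$ and $(B\cap D,A\cup C)\leq(F,E)$; so both corner separations are comparable with $(F,E)$ and hence nested with $(E,F)$. In the second case $(A,B)\leq(E,F)$, i.e.\ $A\subseteq E$ and $F\subseteq B$, the symmetric choice is the pair of adjacent corners $A\cap C$ and $A\cap D$: from $A\cap C\subseteq A\subseteq E$, $A\cap D\subseteq A\subseteq E$ and $F\subseteq B\subseteq B\cup C,\,B\cup D$ we get $(A\cap C,B\cup D)\leq(E,F)$ and $(A\cap D,B\cup C)\leq(E,F)$, so again both corner separations are nested with $(E,F)$. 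In either case the two chosen corners are genuinely adjacent rather than opposite, because they lie on a common side of $(A,B)$ (the side $B$ in the first case, the side $A$ in the second).

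Since all the ingredients are either built into the terminology recalled before the lemma (that the displayed corner separations are indeed separations of $G$) or are one-line checks of the partial order, I do not anticipate a genuine obstacle; the proof is essentially bookkeeping. The only points that require a little care are keeping the direction of $\leq$ straight when $(E,F)$ must be replaced by $(F,E)$, and verifying that the two selected corners are on a common side of $(A,B)$ so that they are adjacent. A reader who prefers to dispense with the reversal reduction can instead run the same short verification separately in all four sub-cases of ``$(E,F)$ nested with $(A,B)$'', obtaining the same inclusions each time.
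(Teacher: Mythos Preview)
Your proof is correct and follows essentially the same approach as the paper: both reduce by symmetry and then verify the relevant $\leq$-comparabilities via one-line inclusions. The only cosmetic difference is that the paper reduces all four sub-cases of nestedness to the single case $(E,F)\leq(A,B)$ (implicitly also allowing the swap $(A,B)\leftrightarrow(B,A)$, which merely permutes the four corners), whereas you reduce to two cases and treat each separately; the resulting inclusions are identical.
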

	
	\begin{proof}
		For the proof of (\ref{itm_adjacent1}) we refer readers to \cite[Lemma 
2.2]{confing}.
		
		To prove (\ref{itm_adjacent2}), let us assume that $(E,F)$ is nested with $(A,B)$.
		We may assume $(E,F)\leq (A,B)$.
		In particular, we have $E\subseteq A\cup C$ and $E\subseteq A\cup D$ as well as 
$B\cap C\subseteq F$ and $B\cap D\subseteq F$.
		Hence, $(E,F)$ is  nested with $(A\cap C, B\cup D)$ and $(A\cap D, B\cup C)$.
	\end{proof}
	
	Let $(A,B)$ be a separation and let $\Scal$ be a set of separations of a graph~$G$.
	We set
	\[
	\Ccal_\Scal(A,B):= \{(C,D)\in\Scal\mid (A,B)\nparallel(C,D)\},
	\]
	i.\,e.\ $\Ccal_\Scal(A,B)$ is the set of all separations in~$\Scal$ that cross $(A,B)$.
	We set $c_\Scal(A,B):=|\Ccal_\Scal(A,B)|$.
	If $c_\Scal(A,B)$ is finite, then we say that $(A,B)$ has \emph{finite crossing number} 
(with 
	respect to~$\Scal$). 
	Otherwise we say that the \emph{crossing number} of $(A,B)$ is \emph{infinite}.
	
	The following lemma shows that corner separations of crossing separations are crossing with 
less separations than the crossing ones.
	This is an adaption of a result of Dunwoody and Kr\"on \cite[Lemma 5.1]{VertexCuts} to our 
situation.
	
	\begin{lemma}\label{lem_inequality}
		Let $(A,B)$ and $(C,D)$ be two crossing separations of a graph $G$ and let $\Scal$ 
be a set of 
		separations.
		If $(X_1,Y_1)$ and $(X_2,Y_2)$ are two opposite corner separations of $(A,B)$ and 
$(C,D)$, then the 
		following holds:
		\begin{enumerate}[{\rm(i)}]
			\item\label{itm_inequality1} $\Ccal_\Scal(X_1,Y_1) \cap 
\Ccal_\Scal(X_2,Y_2)\subseteq 
			\Ccal_\Scal(A,B)\cap \Ccal_\Scal(C,D)$;
			\item\label{itm_inequality2} $\Ccal_\Scal(X_1,Y_1) \cup 
\Ccal_\Scal(X_2,Y_2)\subsetneq 
			\Ccal_\Scal(A,B)\cup \Ccal_\Scal(C,D)$.
		\end{enumerate}
		In particular, if all sets are finite, then we have
		\[
		c_\Scal(X_1,Y_1) + c_\Scal(X_2,Y_2)< c_\Scal(A,B)+ c_\Scal(C,D).
		\]
	\end{lemma}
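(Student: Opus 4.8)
The plan is to prove the two set inclusions \eqref{itm_inequality1} and \eqref{itm_inequality2} directly, and then observe that the displayed inequality on crossing numbers is an immediate consequence of elementary counting (a finite set strictly contained in another has strictly smaller cardinality, and $|X\cup Y| + |X\cap Y| = |X| + |Y|$). So the real content is the two inclusions about which separations in $\Scal$ cross the corner separations versus the original crossing pair. Throughout, fix notation: since $(X_1,Y_1)$ and $(X_2,Y_2)$ are opposite corner separations of $(A,B)$ and $(C,D)$, we may write (up to relabelling) $(X_1,Y_1) = (A\cap C, B\cup D)$ and $(X_2,Y_2) = (B\cap D, A\cup C)$; note $X_1 \subseteq A$, $X_1 \subseteq C$, while $Y_2 = A\cup C \supseteq A$ and $\supseteq C$, so $(X_1,Y_1) \leq (A,B)$, $(X_1,Y_1)\leq (C,D)$, $(X_2,Y_2) \leq (B,A)$ and $(X_2,Y_2)\leq (D,C)$; in particular both corner separations are nested with both $(A,B)$ and $(C,D)$.

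For \eqref{itm_inequality1}, I would take some $(U,W) \in \Scal$ that crosses both $(X_1,Y_1)$ and $(X_2,Y_2)$ and show it must cross both $(A,B)$ and $(C,D)$. Arguing by contraposition: suppose $(U,W)$ is nested with $(A,B)$. Then by Lemma~\ref{lem_adjacent}\eqref{itm_adjacent2}, applied to the crossing pair $(A,B) \nparallel (C,D)$ together with the separation $(E,F) := (U,W)$ nested with $(A,B)$, there are two \emph{adjacent} corners of $(A,B)$ and $(C,D)$ each nested with $(U,W)$. Two adjacent corners, however, are precisely one from each opposite pair — so in particular at least one of $X_1\cap Y_1$-type corner from the pair $\{(X_1,Y_1),(X_2,Y_2)\}$, i.e.\ at least one of $(X_1,Y_1)$, $(X_2,Y_2)$, is nested with $(U,W)$ — wait, I need to be careful: "adjacent corners" means neither is opposite to the other, so the two corners found are, say, $A\cap C$ and $B\cap C$ (sharing $C$); their corner separations need not literally be $(X_1,Y_1)$ or $(X_2,Y_2)$. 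The clean way is: the four corner \emph{separations} are $(X_1,Y_1)$, $(X_2,Y_2)$ and the other opposite pair $(X_1',Y_1'), (X_2',Y_2')$; Lemma~\ref{lem_adjacent}\eqref{itm_adjacent2} hands me two adjacent corners nested with $(U,W)$, and any two adjacent corners include exactly one from the pair giving rise to $\{(X_1,Y_1),(X_2,Y_2)\}$ and exactly one from the other pair. Hence one of $(X_1,Y_1), (X_2,Y_2)$ is nested with $(U,W)$, contradicting that $(U,W)$ crosses both. The same argument with $(C,D)$ in place of $(A,B)$ shows $(U,W) \nparallel (C,D)$. Thus $(U,W) \in \Ccal_\Scal(A,B) \cap \Ccal_\Scal(C,D)$, proving \eqref{itm_inequality1}.

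For \eqref{itm_inequality2}, I first show the inclusion $\Ccal_\Scal(X_1,Y_1) \cup \Ccal_\Scal(X_2,Y_2) \subseteq \Ccal_\Scal(A,B) \cup \Ccal_\Scal(C,D)$, then exhibit a separation in the right-hand side not in the left-hand side to make it strict. For the inclusion, suppose $(U,W) \in \Scal$ crosses $(X_1,Y_1)$ (the case of $(X_2,Y_2)$ is symmetric) but is nested with \emph{both} $(A,B)$ and $(C,D)$; then Lemma~\ref{lem_adjacent}\eqref{itm_adjacent1} says every corner of $(A,B)$ and $(C,D)$ — in particular $(X_1,Y_1)$ — is nested with $(U,W)$, a contradiction. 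So any separation crossing a corner separation must cross one of $(A,B), (C,D)$, giving the inclusion. For strictness, the natural witness is $(C,D)$ itself (or $(A,B)$): it lies in $\Ccal_\Scal(A,B)$ — wait, it need not be in $\Scal$. The right fix, following Dunwoody–Krön, is a counting/pigeonhole observation rather than an explicit witness: each of $(A,B)$ and $(C,D)$ crosses the other, so $(A,B) \in \Ccal_{\s}$-type role — but again membership in $\Scal$ is the issue. Instead I would argue: $(A,B)$ and $(C,D)$ each cross at least one of the four corner separations? No — the cleanest route is that every $(U,W) \in \Scal$ crossing some corner separation in fact crosses \emph{at least two} of $\{(A,B), (C,D)\}$ only trivially (there are only two), but by \eqref{itm_inequality1} the separations crossing \emph{both} corner separations are exactly those crossing both $(A,B)$ and $(C,D)$, whereas there exist separations in $\Ccal_\Scal(A,B)\cup\Ccal_\Scal(C,D)$ crossing only one of $(A,B),(C,D)$ which, by Lemma~\ref{lem_adjacent}\eqref{itm_adjacent2}, are nested with one of the corner separations from the relevant pair — so double-counting, $|\Ccal_\Scal(X_1,Y_1)| + |\Ccal_\Scal(X_2,Y_2)| \le |\Ccal_\Scal(A,B)| + |\Ccal_\Scal(C,D)|$ with equality forcing every separation crossing $(A,B)$ or $(C,D)$ to cross exactly one corner separation from each pair, which fails for a separation crossing $(A,B)$ but nested with $(C,D)$ (it is nested with a corner separation, hence crosses \emph{at most one} of $(X_1,Y_1),(X_2,Y_2)$, not contributing to both sums equally). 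I expect the strictness in \eqref{itm_inequality2} to be the main obstacle: getting the bookkeeping exactly right so that the union is \emph{properly} contained, which is what ultimately drives the termination of the corner-separation process later in the paper. The inclusions themselves are routine given Lemma~\ref{lem_adjacent}; the final displayed inequality then follows from \eqref{itm_inequality1}, \eqref{itm_inequality2} and $|S\cap T| + |S\cup T| = |S| + |T|$ applied with $S = \Ccal_\Scal(X_1,Y_1)$, $T = \Ccal_\Scal(X_2,Y_2)$, using that strict inclusion of finite sets strictly decreases cardinality.
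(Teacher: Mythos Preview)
Your arguments for \eqref{itm_inequality1} and for the (non-strict) inclusion in \eqref{itm_inequality2} are exactly the paper's: both are contrapositives invoking Lemma~\ref{lem_adjacent}\eqref{itm_adjacent2} and~\eqref{itm_adjacent1}, respectively, and the observation that any two adjacent corner separations contain exactly one member of each opposite pair. The ``in particular'' clause also follows just as you say, from $|S|+|T|=|S\cup T|+|S\cap T|$ together with \eqref{itm_inequality1} and strict \eqref{itm_inequality2}.

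Where you diverge is the strictness in \eqref{itm_inequality2}, and here there is a genuine gap. The paper does precisely what you first suggest and then talk yourself out of: it exhibits $(A,B)$ as a witness, since $(A,B)\in\Ccal_\Scal(C,D)$ (it crosses $(C,D)$) while $(A,B)$ is nested with every corner separation, hence lies in neither $\Ccal_\Scal(X_i,Y_i)$. You are right that this tacitly requires $(A,B)\in\Scal$; the lemma statement omits this hypothesis, but in its only application (Lemma~\ref{lem_separation_exists}) both $(A,B)$ and $(C,D)$ do lie in~$\Scal$. Your attempted counting workaround cannot succeed in the generality you aim for: with $\Scal=\emptyset$ all four $\Ccal_\Scal$-sets are empty and $\emptyset\subsetneq\emptyset$ is false, so some membership assumption on $(A,B)$ or $(C,D)$ is genuinely needed. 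Rather than chasing a double-count that cannot close, just add the hypothesis $(A,B)\in\Scal$ (or note that it holds wherever the lemma is invoked) and use the one-line witness argument.
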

	
	\begin{proof}
		For (\ref{itm_inequality1}), let $(E,F)$ be a separation that is nested with either 
$(A,B)$ or 
		$(C,D)$.
		By Lemma \ref{lem_adjacent}\,(\ref{itm_adjacent2}), there are two adjacent corners 
of $(A,B)$ and $(C,D)$ which are nested 
		with $(E,F)$.
		Thus, $(E,F)$ is nested with either $(X_1,Y_1)$ or $(X_2,Y_2)$.
		
		To prove (\ref{itm_inequality2}), let $(E,F)\in \Ccal_\Scal(X_1,Y_1) \cup 
\Ccal_\Scal(X_2,Y_2)$.
		By Lemma~\ref{lem_adjacent}\,(\ref{itm_adjacent1}), the separation $(E,F)$ belongs 
to 
		$\Ccal_\Scal(A,B)\cup \Ccal_\Scal(C,D)$.
		The inclusion is strict, as $(A,B)$ belongs to $\Ccal_\Scal(A,B)\cup 
\Ccal_\Scal(C,D)$ but not to 
		$\Ccal_\Scal(X_1,Y_1) \cup \Ccal_\Scal(X_2,Y_2)$.
		
		The additional assertion follows directly from~(\ref{itm_inequality1}) 
and~(\ref{itm_inequality2}).
	\end{proof}	
	
	\begin{lemma}\label{lem_opposite}
		Let $G$ be a graph and $\Pcal$ be a set of distinguishable $(k+1)$-profiles in~$G$ 
with $k=\kappa(\Pcal,G)$.
		Let $(A_1,A_2), (B_1,B_2)\in\Rcal(k,\Pcal)$.
		Then there are two opposite corner separations of $(A_1,A_2)$ and $(B_1,B_2)$ that 
lie in $\Rcal(k,\Pcal)$.
	\end{lemma}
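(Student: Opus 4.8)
The plan is to argue that among the four corner separations of $(A_1,A_2)$ and $(B_1,B_2)$, at least one of the two opposite pairs consists of relevant separations. Since both given separations have order $k=\kappa(\Pcal,G)$ and distinguish profiles in $\Pcal$, and since no separation in $G$ has order less than $k$ and distinguishes two profiles of $\Pcal$, it suffices to show: (a) each corner separation in the chosen pair has order exactly $k$ (equivalently, at most $k$, since $k$ is the minimum), and (b) each distinguishes two profiles in $\Pcal$. For part (a), I would invoke the standard submodularity of the order function: writing $|X_1\cap Y_1|+|X_2\cap Y_2|\le |A_1\cap A_2|+|B_1\cap B_2| = 2k$ for opposite corner separations, each summand on the left is at least $k$ by minimality of $\kappa$, so in fact both corner orders equal $k$. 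This holds for either of the two opposite pairs, so the only real content is in (b).

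**Locating the distinguished profiles.**
For part (b), I would use that $(A_1,A_2)$ distinguishes some $P,P'\in\Pcal$, say $(A_1,A_2)\in P$ and $(A_2,A_1)\in P'$, and $(B_1,B_2)$ distinguishes some $Q,Q'\in\Pcal$ with $(B_1,B_2)\in Q$, $(B_2,B_1)\in Q'$. The key point is that since $\Pcal$ consists of $(k+1)$-profiles and every corner separation has order $k<k+1$, each corner separation $(E,F)$ or its reverse lies in each of $P,P',Q,Q'$. So the question becomes purely combinatorial: I want to find an opposite pair of corners such that each corner separation, with some orientation, is ``split'' by the four profiles. The idea is to look at where $P,P'$ and $Q,Q'$ sit: for instance, consider the corner separation $(A_1\cap B_1, A_2\cup B_2)$. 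Each of the four profiles orients it. If some profile $R$ has $(A_1\cap B_1,A_2\cup B_2)\in R$, then since $(A_1\cap B_1,A_2\cup B_2)\le (A_1,A_2)$, consistency (P1) forces $(A_1,A_2)\in R$, i.e.\ $R\ne P'$; similarly $R\ne Q'$, so $R\in\{P,Q\}$. Hence the profiles orienting this corner ``towards the small side'' are among $\{P,Q\}$ and those orienting it ``towards the big side'' include $\{P',Q'\}$ — provided $P'\ne Q'$ we are done with that corner, and its opposite corner $(A_2\cap B_2,A_1\cup B_1)$ is handled symmetrically using $\{P',Q'\}$ versus $\{P,Q\}$.

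**The case analysis.**
The main obstacle, as always in these corner arguments, is the degenerate overlaps among $\{P,P',Q,Q'\}$ — e.g.\ $P'=Q'$, or $P=Q'$, etc. I would organize this by cases on how many of the four profiles are distinct. If all four are distinct, the argument of the previous paragraph applies to the pair $\{(A_1\cap B_1,A_2\cup B_2),(A_2\cap B_2,A_1\cup B_1)\}$. If there are coincidences, I would switch to the other opposite pair $\{(A_1\cap B_2, A_2\cup B_1),(A_2\cap B_1,A_1\cup B_2)\}$, which ``sees'' $P,Q'$ on one side and $P',Q$ on the other; one of the two pairs will always separate some two of the four profiles because $P\ne P'$ and $Q\ne Q'$ are genuinely different splits. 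Concretely: if $P\ne Q$ and $P'\ne Q'$, the first pair works; if instead $P=Q$ (so $P'\ne Q$ as $Q$ distinguishes from $Q'$... ) one checks the second pair with $P=Q$ on one side and $P',Q'$ split by it. I expect each individual case to reduce, via (P1) and the order computation already done, to verifying that two of the profiles receive opposite orientations of the relevant corner separation — a short check. The bookkeeping of which corner pair to pick in which case is the part requiring care, but it is finite and elementary.
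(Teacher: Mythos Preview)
Your consistency argument via (P1) runs in the wrong direction. Consistency says that if $(C,D)\le (A,B)$ and $(A,B)\in R$ then $(D,C)\notin R$; equivalently, the larger separation forces the smaller one into~$R$, not the other way round. So from $(A_1\cap B_1,A_2\cup B_2)\in R$ and $(A_1\cap B_1,A_2\cup B_2)\le (A_1,A_2)$ you \emph{cannot} conclude $(A_1,A_2)\in R$. The correct implication is the reverse: $(A_1,A_2)\in P$ forces $(A_1\cap B_1,A_2\cup B_2)\in P$, and likewise $(A_2,A_1)\in P'$ forces $(A_2\cap B_2,A_1\cup B_1)\in P'$. But this only tells you how each profile orients \emph{one} of the two corners, never that any profile orients a corner ``towards the small side''. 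In particular, nothing you have written rules out that all four profiles $P,P',Q,Q'$ contain $(A_1\cap B_1,A_2\cup B_2)$, so that corner need not be relevant.

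The missing ingredient is (P2), not (P1). The paper first proves that one can choose $P,P'\in\{P_{A_1},P_{A_2},P_{B_1},P_{B_2}\}$ and indices $i,j$ with $(A_{3-i},A_i),(B_{3-j},B_j)\in P$ and the reversed pair in~$P'$; this is a short case check (your ``case analysis'' is really aiming at this). Then (P2) applied to the two separations in~$P$ shows the corner $(A_i\cap B_j,A_{3-i}\cup B_{3-j})$ is \emph{not} in~$P$, and symmetrically the opposite corner is not in~$P'$; now (P1) in the correct direction gives that $P'$ contains $(A_i\cap B_j,A_{3-i}\cup B_{3-j})$, so the corner genuinely distinguishes $P$ and~$P'$. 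Your order argument is also circular as written: ``each summand is at least $k$ by minimality of~$\kappa$'' presupposes that the corners distinguish profiles in~$\Pcal$, which is exactly part~(b). The paper intertwines the two: once (P2) yields that a corner of order at most~$k$ would distinguish $P$ and~$P'$, order strictly below~$k$ contradicts $k=\kappa(\Pcal,G)$, and submodularity (which is an equality here) then forces both opposite corner orders to equal~$k$.
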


	\begin{proof}
		For $i\in\{1,2\}$, let $P_{A_i}$ and $P_{B_i}$ be profiles in~$\Pcal$ such that 
$(A_{3-i},A_i)\in P_{A_i}$ and $(B_{3-i},B_i)\in P_{B_i}$.
		Let $\Pcal':=\{P_{A_1},P_{A_2},P_{B_1},P_{B_2}\}$.
		First, we will show the following.
		\begin{equation}\tag{$\dagger$}
			\begin{minipage}[t]{0.9\textwidth}
				\em
				There are $i,j\in\{1,2\}$ and $P,P'\in\Pcal'$ such that 
$(A_{3-i},A_i),(B_{3-j},B_j)\in P$ and $(A_i,A_{3-i}),(B_j,B_{3-j})\in P'$.
			\end{minipage}
		\end{equation}
		If $(B_1,B_2)$ distinguishes $P_{A_1}$ and $P_{A_2}$, then ($\dagger$) holds for 
those two profiles.
		Let us assume that $(B_1,B_2)$ does not distinguish $P_{A_1}$ and $P_{A_2}$.
		We may assume that $(B_1,B_2)\in P_{A_1},P_{A_2}$.
		Without loss of generality, we may assume that $(A_1,A_2)\in P_{B_1}$.
		Then ($\dagger$) holds for the profiles $P_{A_1}$ and $P_{B_1}$.
		
		Let us consider the corner separations $(X_1,X_2):=(A_{3-i}\cup B_{3-j},A_i\cap 
B_j)$ and $(Y_1,Y_2):=(A_i\cup B_j,A_{3-i}\cap B_{3-j})$.
		By definition of a profile, we have $(X_2,X_1)\notin P$ and $(Y_2,Y_1)\notin P'$.
		It is easy to see that the orders of $(X_1,X_2)$ and $(Y_1,Y_2)$ sum to the sum of 
the orders of $(A_1,A_2)$ and $(B_1,B_2)$, which is~$2k$.
		So if one of those corner separations has order at most~$k$, it follows that it 
distinguishes $P$ and~$P'$, and if its order is less than~$k$, this would contradict 
$k=\kappa(\Pcal,G)$.
		Thus, both corner separations have order at least~$k$ and hence exactly~$k$.
		So they lie in $\Rcal(k,\Pcal)$.
	\end{proof}
	
	\begin{lemma}\label{lem_compsep}
		Let $G$ be a well-separable graph with respect to a set $\Pcal$ of principal $k$-profiles, let 
$P,P'\in \Pcal$ and 
		let $(A,B)\in \Reff(P,P')$.
		Then there is a component $X$ of $A\sm B$ such that $(X\cup N(X),V(G)\sm X)$ lies 
in~$\Reff(P,P')$.
	\end{lemma}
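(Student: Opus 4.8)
The plan is the following. Since $(A,B)\in\Reff(P,P')$, I would assume without loss of generality that $(A,B)\in P$ and $(B,A)\in P'$, and look for the desired component among the components of $A\sm B$. Note that $(A,B)$ has order $\kappa(\Pcal,G)$, so, as $G$ is well-separable with respect to $\Pcal$, no component of $G-(A\cap B)$ is degenerated.

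First I would record the easy facts. For any component $X$ of $A\sm B$ we have $N(X)\subseteq A\cap B$, and since $X$ is not degenerated in fact $N(X)=A\cap B$; hence $(X\cup N(X),V(G)\sm X)$ has order $\kappa(\Pcal,G)$ and satisfies $(X\cup N(X),V(G)\sm X)\leq(A,B)$. By consistency of $P$, and because $P$ decides every separation of order $\kappa(\Pcal,G)$ (as $P$ is a $k$-profile and $\kappa(\Pcal,G)<k$), this yields $(X\cup N(X),V(G)\sm X)\in P$ for \emph{every} component $X$ of $A\sm B$. So it suffices to find one component $X$ with $(V(G)\sm X,X\cup N(X))\in P'$: such a separation distinguishes $P$ and $P'$, it has order $\kappa(\Pcal,G)$ and hence does so efficiently, so it lies in $\Reff(P,P')$, as desired.

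To produce such an $X$ I would argue by contradiction: assume $(X\cup N(X),V(G)\sm X)\in P'$ for every component $X$ of $A\sm B$ (using again that $P'$ decides separations of order $\kappa(\Pcal,G)$). Feeding $(B,A)\in P'$ and $(X\cup N(X),V(G)\sm X)\in P'$ into (P2) gives $(A\sm X,B\cup X)\notin P'$, hence $(B\cup X,A\sm X)\in P'$, for each such $X$. Each of these separations has separator exactly $A\cap B$, so they form a family as in the definition of a principal profile; principality of $P'$ therefore forces $\bigcap_X\big((A\sm X)\sm(B\cup X)\big)\neq\es$. But $(A\sm X)\sm(B\cup X)=(A\sm B)\sm X$, and the components $X$ partition the set $A\sm B$, which is nonempty since the relevant separation $(A,B)$ is proper; so this intersection is empty --- the required contradiction.

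The set-theoretic identities for corners and separators, and the checks that the auxiliary pairs $(A\sm X,B\cup X)$ and $(B\cup X,A\sm X)$ really are separations of $G$, are routine. The one point that needs care is the last step: one has to realise that $(B,A)$ must be combined via (P2) with \emph{each} capped component separation, so that the contradiction hypothesis turns into a whole family of $P'$-separations sharing the separator $A\cap B$ whose interiors $(A\sm B)\sm X$ have empty intersection --- precisely the configuration ruled out by principality. Well-separability is used only to ensure that the capped separations have order $\kappa(\Pcal,G)$, so that they can belong to $\Reff(P,P')$ in the first place.
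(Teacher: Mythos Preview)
Your proof is correct. The paper's argument is a little more direct and avoids (P2): having shown $(C_i\cup N(C_i),V(G)\sm C_i)\in P$ for every component $C_i$ of $A\sm B$ just as you do, it then uses the `preferred component' consequence of principality (there is a component $K$ of $G-(A\cap B)$ with $(V(G)\sm K,K\cup N(K))\in P'$), and rules out $K\subseteq B\sm A$ via (P1) applied to $(K\cup N(K),V(G)\sm K)\le(B,A)\in P'$; hence $K$ is one of the $C_i$. Your route instead assumes for a contradiction that every capped component separation lies in $P'$, pushes each of these against $(B,A)$ via (P2) to obtain the family $((B\cup X,A\sm X))_X$ with common separator $A\cap B$, and then invokes the raw definition of principality to derive the empty-intersection contradiction. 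Both arguments are short; the paper's saves the (P2) detour, while yours has the minor virtue of exhibiting explicitly the family that witnesses the failure of principality. One small quibble with your closing remark: even without well-separability, the capped separation you find would automatically have order exactly $\kappa(\Pcal,G)$, since it distinguishes $P$ and $P'$ and has order at most $|A\cap B|=\kappa(\Pcal,G)$; well-separability is really used (in both proofs) to identify $N(X)$ with $A\cap B$ so that the various auxiliary separations have the common separator $A\cap B$.
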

	
	\begin{proof}
		Let us assume that $(A,B)\in P$ and $(B,A)\in P'$.
		Let $\{C_i\}_{i\in I}$ be the components of $A\sm B$.
		Since $(C_i\cup N(C_i),V(G)\sm C_i)\leq (A,B)$ and $(A,B)\in P$, we conclude that 
$(C_i\cup 
		N(C_i),V(G)\sm C_i)\in P$ for all $i\in I$.
Since $(D\cup N(D),V(G)\sm D)\leq (B,A)$ for all components $D$ of $B\sm A$, it follows by (P1) and as the profiles are principal that $(V(G)\sm C_i,C_i\cup N(C_i))\in P'$ for some $i\in I$.
		Thus we have 
$(C_i\cup 
		N(C_i),V(G)\sm C_i)\in\Reff(P,P')$.
	\end{proof}
	
	Let $G$ be a graph and $\Pcal$ be a set of profiles of~$G$ each of which is an 
$\ell$-profile for some $\ell>k$.
	Note that every \lc\ separation in~$\Rcal(k,\Pcal)$ has a finite crossing number in the 
subset $\Rcal_{\rm lc}(k,\Pcal)$ of \lc\ separations in~$\Rcal(k,\Pcal)$ by 
Lemma~\ref{lem_minimally}.
	Let $\Ocal(k,\Pcal)$ be the set of \lc\ separations in~$\Rcal(k,\Pcal)$ with minimum 
crossing number in $\Rcal_{\rm lc}(k,\Pcal)$.
	
	The following lemma is essentially already proved in Dunwoody and Kr\"on~\cite{VertexCuts}.
	But their result requires the existence of \emph{cut systems} as they call it.
	Instead of showing that we can define a cut system in our case, we briefly prove 
Lemma~\ref{lem_separation_exists} directly. To state Lemma~\ref{lem_separation_exists}, we define canonicity slightly stronger than in the introduction.

A set $\Scal$ of separations of a graph $G$ is \emph{canonical (with respect to a set $\Pcal$ of profiles)} if the construction $\psi$ of~$\Scal$ commutes with all isomorphisms~$\varphi$ of the pair $(G,\Pcal)$, 
	i.\,e.\ if $\varphi$ is an isomorphism $G\to G'$ that induces a map $\Pcal\to\Pcal'$, then $\varphi'(\psi(G))=\psi(\varphi(G))$, where $\varphi'$ is the map from $\Scal$ to the set of separations of~$G'$ that is induced by~$\varphi$.
	
	\begin{lemma}\label{lem_separation_exists}
		Let $G$ be a well-separable graph with respect to a set $\Pcal$ of robust principal 
$(k+1)$-profiles of~$G$ with $k=\kappa(\Pcal,G)$ such that $\Rcal(k,\Pcal)\neq\es$.
		Then the set $\Ocal(\Pcal,G)$ is nested, canonical with respect to~$\Pcal$ and not 
empty.
	\end{lemma}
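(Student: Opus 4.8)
The claim has three parts --- $\Ocal(\Pcal,G)$ is non-empty, canonical, and nested --- and only nestedness is substantial.

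\emph{Non-emptiness and canonicity.} Pick any $(A,B)\in\Rcal(k,\Pcal)$. Since $k=\kappa(\Pcal,G)$, this separation distinguishes two profiles of $\Pcal$ efficiently, so $(A,B)\in\Reff(P,P')$ for suitable $P,P'\in\Pcal$. By Lemma~\ref{lem_compsep} there is a component $X$ of $A\sm B$ with $(X\cup N(X),V(G)\sm X)\in\Reff(P,P')\subseteq\Rcal(k,\Pcal)$, and this separation is \lc\ because $X$ is connected; hence $\Rcal_{\rm lc}(k,\Pcal)\neq\es$. As remarked before the lemma, every \lc\ separation in $\Rcal(k,\Pcal)$ has finite crossing number in $\Rcal_{\rm lc}(k,\Pcal)$ (an application of Lemma~\ref{lem_minimally}), so the minimum of these crossing numbers is attained and $\Ocal(\Pcal,G)\neq\es$. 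Canonicity is then immediate: the construction of $\Ocal(\Pcal,G)$ only refers to the value $\kappa(\Pcal,G)$, the sets $\Rcal(k,\Pcal)$ and $\Rcal_{\rm lc}(k,\Pcal)$, the property of a separation being \lc, the crossing number of a separation with respect to $\Rcal_{\rm lc}(k,\Pcal)$, and the selection of those with minimum such crossing number, all of which are preserved by any isomorphism of the pair $(G,\Pcal)$.

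\emph{Nestedness.} Suppose towards a contradiction that $(A,B),(C,D)\in\Ocal(\Pcal,G)$ cross; write $\Scal:=\Rcal_{\rm lc}(k,\Pcal)$ and let $m$ be the minimum crossing number in $\Scal$, so $c_\Scal(A,B)=c_\Scal(C,D)=m$. By Lemma~\ref{lem_opposite} two opposite corner separations $\sigma_1,\sigma_2$ of $(A,B)$ and $(C,D)$ lie in $\Rcal(k,\Pcal)$ and hence have order exactly $k$; applying Lemma~\ref{lem_inequality} to these opposite corners with respect to $\Scal$ gives $c_\Scal(\sigma_1)+c_\Scal(\sigma_2)<c_\Scal(A,B)+c_\Scal(C,D)=2m$, so we may assume $c_\Scal(\sigma_1)<m$. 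Now apply Lemma~\ref{lem_compsep} to $\sigma_1=(P_1,Q_1)$, using that $G$ is well-separable and that the profiles are principal: there is a component $Z$ of $P_1\sm Q_1$ such that $\tau:=(Z\cup N(Z),V(G)\sm Z)\in\Reff\subseteq\Rcal(k,\Pcal)$, and $N(Z)$ equals the separator $S$ of $\sigma_1$ by well-separability. Moreover $\tau$ is \lc\ and $\tau\le\sigma_1$, so $\tau\in\Scal$ and $c_\Scal(\tau)\ge m>c_\Scal(\sigma_1)$. Therefore there is some $(E,F)\in\Scal$ which crosses $\tau$ but is nested with $\sigma_1$, and it suffices to rule this out: then $\Ccal_\Scal(\tau)\subseteq\Ccal_\Scal(\sigma_1)$, so $c_\Scal(\tau)\le c_\Scal(\sigma_1)<m\le c_\Scal(\tau)$, a contradiction.

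\emph{The main obstacle.} So assume $(E,F)\in\Scal$ crosses $\tau$ and is nested with $\sigma_1$. Since $(E,F)$ and $\tau$ cross and both lie in $\Rcal(k,\Pcal)$, Lemma~\ref{lem_opposite} provides two opposite corner separations $\rho_1,\rho_2$ of $(E,F)$ and $\tau$ in $\Rcal(k,\Pcal)$, necessarily of order exactly $k$; one of them, $\rho$ say, has its first side contained in the first side $Z\cup S$ of $\tau$, so that $\rho\le\tau\le\sigma_1$. Thus $\rho$ is a relevant separation of order $k$ that distinguishes two profiles of $\Pcal$ efficiently, yet it lies below $\tau$ with its first side confined to the single component $Z$ of $G-S$ together with $S$. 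I expect the heart of the argument to be the proof that this cannot occur: one uses that every component of $G-S$ has neighbourhood exactly $S$ (well-separability applied to $\sigma_1$), that $|S|=k=\kappa(\Pcal,G)$, that the separations of $\Scal$ are \lc, and the robustness of the profiles --- the \lc ness and robustness hypotheses entering only here --- to derive from $\rho$ a contradiction with the minimality of $\kappa(\Pcal,G)$. This is precisely the type of reasoning carried out by Dunwoody and Kr\"on~\cite{VertexCuts}, and reproducing it in the present generality (without appealing to a cut system) is the step I would expect to need the most care.
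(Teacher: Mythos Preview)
Your reduction is exactly the paper's: non-emptiness via Lemma~\ref{lem_compsep}, then for nestedness pick a corner $\sigma_1$ of two crossing elements of $\Ocal(\Pcal,G)$ with $c_\Scal(\sigma_1)<m$, pass to a \lc\ $\tau\le\sigma_1$ in~$\Scal$ via Lemma~\ref{lem_compsep}, and finish by showing $\Ccal_\Scal(\tau)\subseteq\Ccal_\Scal(\sigma_1)$. Up to this point you and the paper agree.

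The gap is in your ``main obstacle'' paragraph. Your plan there --- reapply Lemma~\ref{lem_opposite} to $(E,F)$ and $\tau$, produce a further corner $\rho\le\tau$, and invoke robustness to manufacture a separator of order $<k$ --- does not lead anywhere: the corner $\rho$ carries no new information beyond $\tau$ itself, and robustness plays no role in this lemma. What is actually needed is the elementary verification that any $(E,F)\in\Scal$ nested with $\sigma_1$ is already nested with~$\tau$; the paper states this as ``easy to see''. Here is why it holds. Write $S:=P_1\cap Q_1$; by well-separability $N(Z)=S$, so $\tau$ and $\sigma_1$ share the separator~$S$ and $\tau\le\sigma_1$. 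The relations $\sigma_1\le(E,F)$ and $(E,F)\le\sigma_1^*$ immediately give $\tau\le(E,F)$ and $(E,F)\le\tau^*$. If $(E,F)\le\sigma_1$, then $E\sm F\subseteq P_1\sm Q_1$; since $(E,F)$ is \lc, $E\sm F$ lies in a single component $Z'$ of $G-S$, and well-separability applied to the order-$k$ separation $(E,F)$ gives $E\cap F=N(E\sm F)\subseteq Z'\cup S$, hence $E\subseteq Z'\cup S$. If $Z'=Z$ this yields $(E,F)\le\tau$; if $Z'\neq Z$ it yields $(E,F)\le\tau^*$. The remaining case $(F,E)\le\sigma_1$ is handled symmetrically by looking at the components of $F\sm E$ and using well-separability the same way. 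Thus $c_\Scal(\tau)\le c_\Scal(\sigma_1)<m$, the desired contradiction.

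So the missing idea is not a deep structural fact about profiles but the simple observation that, thanks to well-separability, $\tau$ and $\sigma_1$ have the \emph{same} separator and every \lc\ order-$k$ separation below $\sigma_1$ has its small side confined to a single component of $G-S$; this is what forces nestedness with~$\tau$. Robustness is not used here.
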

	
	\begin{proof}
		Let $(A,B)\in\Ocal(\Pcal,G)$.
		Let us suppose that $(A,B)$ is not nested with an another \lc\ separation $(C,D)$ 
in 
$\Rcal(k,\Pcal)$.
		Then there are two opposite corners whose separations lie in $\Rcal(k,\Pcal)$ by 
Lemma~\ref{lem_opposite} and one of these corner separations, call it $(E,F)$, is crossing with a 
smaller number of separations in $\Rcal(k,\Pcal)$ than $(A,B)$ by Lemma~\ref{lem_inequality}.
		If $(E,F)$ is not \lc, then there is a component $K$ of~$E\sm F$ such that 
$(E',F'):=(K\cup N(K),V(G)\sm K)$ is a \lc\ separation in $\Rcal(k,\Pcal)$ by 
Lemma~\ref{lem_compsep}.
		It is easy to see that $(E',F')$ is nested with all separations that are nested 
with 
$(E,F)$ and thus is crossing with less separations in $\Rcal_{\rm lc}(k,\Pcal)$ than $(A,B)$, a 
contradiction to the choice of $(A,B)$.
	\end{proof}

	\section{Tree-decompositions}\label{sec_TD}
	
	Carmesin, Diestel, Hundertmark and Stein~\cite{confing} presented a method for finite 
graphs 
how to build a \td\ from a 
	nested set of separations.
	Essentially, their method carries over to infinite graphs but the proofs need a small 
adjustment to 
	deal with an additional assumption that we need.
	In this section, we will recap their definitions and results.
	We will omit the proof where appropriate and highlight the differences from the finite to 
the 
	infinite case.
	
	A \emph{\td} of a graph $G$ is a pair $(T, \Vcal)$ of a tree $T$ and a family $\Vcal = 
(V_t)_{t\in 
		V(T)}$ of vertex sets $V_t\subseteq V(G)$, one for every node of $T$, such that:
	\begin{enumerate}[(T1)]
		\item  $V (G) = \bigcup_{t\in V(T)} V_t$,
		\item for every edge $e\in E(G)$, there exists a $t\in V(T)$ with $e\sub V_t$,
		\item $V_{t_1} \cap V_{t_3} \subseteq V_{t_2}$ for all $t_2$ that lie on the 
$t_1$-$t_3$ 
		path in~$T$.
	\end{enumerate}
	The elements of~$\Vcal$ are the \emph{parts} of the \td.
	The sets $V_s\cap V_t$ with $st\in E(T)$ are the \emph{adhesion sets}.
	The \emph{adhesion} is the supremum of the cardinalities of the adhesion sets.
	A \td\ is \emph{$k$-balanced} if all its adhesion sets have size~$k$ and it is 
\emph{balanced} if it is $k$-balanced for some $k\in\N$.
	
	Let $\Ncal$ be a nested set of separations.
	We construct a tree decomposition $(T(\Ncal),\Vcal(\Ncal))$ via $\Ncal$.
	We define a relation $\sim$ on~$\Ncal$:
	\[
	(A,B)\sim (C,D)\ \Leftrightarrow\alt{(A,B) = (C,D) \text{ or}}{(B,A)\text{ is a predecessor 
of 
		}(C,D)\text{ in }(\Ncal,\leq),}
	\]
	where a \emph{predecessor} of an element $z$ of a partial order $(P,\le)$ is an element 
$x<z$ of~$P$ 
	such that there is no $y\in P$ with $x < y < z$.
	By the same argument as in \cite[Lemma 3.1]{confing}, one can show that $\sim$ is an 
equivalence 
	relation on~$\Ncal$.
	The nodes of $T(\Ncal)$ are the equivalence classes of~$\sim$ on~$\Ncal$.
	We define the set of edges of $T(\Ncal)$ as
	\[
	E(T(\Ncal)) := \big\{\{(A,B),(B,A)\}\mid(A,B)\in \Ncal\big\}
	\]
	and an edge is incident with the two equivalence classes of its elements.
	Let $\Vcal(\Ncal)$ consists of all
	\[
	V_t := \bigcap\left\{A\mid (A,B) \in t\right\}
	\]
	with $t\in V(T(\Ncal))$.
	
	\begin{prop}\label{prop_tree}
		Let $G$ be a graph and $\Ncal$ a nested set of separations of~$G$ satisfying {\rm 
			(\ref{property_Finiteness})}.
		Then $T(\Ncal)$ is a tree.
	\end{prop}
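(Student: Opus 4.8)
The statement to prove is Proposition~\ref{prop_tree}: given a nested set $\Ncal$ of separations satisfying property $(*)$, the graph $T(\Ncal)$ defined via the equivalence relation $\sim$ is a tree. The natural strategy follows the finite-case argument of Carmesin, Diestel, Hundertmark and Stein~\cite{confing}, adapted to cope with the extra hypothesis $(*)$ that becomes necessary precisely because $\Ncal$ may be infinite and chains in $(\Ncal,\le)$ need not have immediate predecessors without it. Concretely, I would prove two things: first that $T(\Ncal)$ is connected, and second that it is acyclic. Together these give that $T(\Ncal)$ is a tree.

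\emph{Connectedness.} I would argue that any two nodes $s,t$ of $T(\Ncal)$ are joined by a walk. Pick representatives $(A,B)\in s$ and $(C,D)\in t$; since $\Ncal$ is nested, $(A,B)$ is comparable with $(C,D)$ or with $(D,C)$, so after possibly swapping sides we may assume $(A,B)\le(C,D)$, i.e.\ $(A,B)\le(C,D)$ in $(\Ncal,\le)$ (the case $(A,B)=(D,C)$ etc.\ being handled by the edge $\{(A,B),(B,A)\}$). Now property $(*)$ is exactly what lets me find a \emph{finite} maximal chain $(A,B)=(E_0,F_0)<(E_1,F_1)<\dots<(E_n,F_n)=(C,D)$ in $\Ncal$: between any two elements of $\Ncal$ there are only finitely many separations of $\Ncal$ strictly in between, so a maximal chain between them is finite, and consecutive elements of it are predecessor/successor pairs. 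By definition of $\sim$, $(E_{i},F_{i})$ and $(F_{i+1},E_{i+1})$ lie in a common node (since $(E_i,F_i)=(F_{i+1},E_{i+1})$ fails in general, but $(F_i,E_i)$ is a predecessor of... — more precisely, consecutive chain elements give, via the edges $\{(E_{i+1},F_{i+1}),(F_{i+1},E_{i+1})\}$, a walk in $T(\Ncal)$ from $s$ to $t$). So $T(\Ncal)$ is connected.

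\emph{Acyclicity.} Suppose for contradiction that $T(\Ncal)$ contains a cycle $t_0 e_0 t_1 e_1 \cdots e_{m-1} t_0$ with $m\ge 1$ and the edges $e_i$ distinct. Each edge $e_i = \{(A_i,B_i),(B_i,A_i)\}$ orients so that $(A_i,B_i)\in t_i$ and $(B_i,A_i)\in t_{i+1}$ (indices mod $m$). Using nestedness of $\Ncal$ and the definition of $\sim$ inside each node $t_i$, one shows that the separations $(A_i,B_i)$ along the cycle must be linearly ordered in a way that forces $(A_0,B_0)<(A_0,B_0)$, or more directly that $(B_i,A_i)$ and $(A_{i+1},B_{i+1})$ lying in the common node $t_{i+1}$ forces $(A_i,B_i)\le(A_{i+1},B_{i+1})$ (one is a predecessor of the other, or they are equal — and distinctness of edges rules out equality around a cycle), yielding a strictly increasing cyclic chain, a contradiction. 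Here I would lean on the same combinatorial bookkeeping as \cite[Lemma~3.1, Prop.~3.2]{confing}; property $(*)$ is not needed for acyclicity, only for connectedness.

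\emph{Main obstacle.} The delicate point is the connectedness argument, and specifically the invocation of $(*)$: in the finite setting one simply takes a maximal chain and it is automatically finite, whereas here I must argue that $(*)$ guarantees finiteness of a maximal chain between two comparable elements, and that the endpoints of such a chain genuinely land in the prescribed nodes $s$ and $t$ (this requires checking that $(A,B)$ and the bottom of the chain represent the same $\sim$-class, which is immediate since they are equal, and similarly at the top). A secondary subtlety is verifying that $\sim$ is well-defined as an equivalence relation in the infinite case — but the excerpt already asserts this follows by the argument of \cite[Lemma~3.1]{confing}, so I would cite it rather than reprove it. The rest is routine transcription of the finite proof.
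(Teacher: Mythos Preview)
Your proposal is correct and takes essentially the same approach as the paper: both follow the finite-case argument of \cite[Theorem~3.4]{confing} almost verbatim, invoking property~(\ref{property_Finiteness}) precisely at the connectedness step to guarantee that a maximal chain between two comparable separations is finite. The paper's proof is in fact just a two-sentence pointer to \cite{confing} with this remark about where $(*)$ enters, so your write-up is a faithful expansion of it.
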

	
	\begin{proof}
		The proof of connectedness and lack of having cycles follows the proof of the 
analogue result for 
		finite graphs of Carmesin, Diestel, Hundertmark and Stein~\cite[Theorem 
3.4]{confing} almost verbatim. We just have to apply 
		(\ref{property_Finiteness}) at the according place to verify that $T(\Ncal)$ is 
connected.
	\end{proof}
	
	If $(T,\Vcal)$ is a \td\ of a graph~$G$, $e\in E(T)$ and $T_1,T_2$ the components of $T-e$, 
then 
	$(\bigcup_{t\in V(T_1)}V_t,\bigcup_{t\in V(T_2)}V_t)$ is a separation of~$G$ and its order 
is the 
	size of the adhesion set corresponding to~$e$.
	The separation is the separation \emph{induced by} the edge~$e$.
	
	With Proposition~\ref{prop_tree}, the proof of \cite[Theorem 4.8]{confing} carries over to 
our 
	situation.
	
	\begin{thm}{\rm \cite[Theorem 4.8]{confing}}\label{thm_nestedToTD}
		Let $G$ be a graph and $\Ncal$ a nested set of separations satisfying~{\rm 
			(\ref{property_Finiteness})}.
		Then $(T(\Ncal),\Vcal(\Ncal))$ is a \td.
		The separations induced by $(T(\Ncal),\Vcal(\Ncal))$ are precisely those 
in~$\Ncal$.\qed
	\end{thm}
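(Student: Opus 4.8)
The plan is to follow the blueprint of the finite case \cite[Theorem 4.8]{confing}, feeding in Proposition~\ref{prop_tree} wherever tree-ness of $T(\Ncal)$ is invoked, and to isolate the one spot where infiniteness forces a genuinely new argument.

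Since $T(\Ncal)$ is a tree by Proposition~\ref{prop_tree}, it remains to verify (T1)--(T3) and that the separations induced by $(T(\Ncal),\Vcal(\Ncal))$ are exactly those in $\Ncal$. The bookkeeping device is the \emph{orientation induced by a node}: for a node $t$ of $T(\Ncal)$ and $(A,B)\in\Ncal$, the edge $\{(A,B),(B,A)\}$ splits $T(\Ncal)$ into two components, exactly one of which contains $t$; let $O_t$ collect, over all $(A,B)\in\Ncal$, the orientation of $\{A,B\}$ whose equivalence class is the endpoint lying in $t$'s component. A short, purely formal, first step is the identity $V_t=\bigcap\{A\mid (A,B)\in O_t\}$: the inclusion ``$\supseteq$'' is the definition of $V_t$, and ``$\subseteq$'' holds because any $(A,B)\in O_t$ that is not already a member of the class $t$ lies above the separation $(C,D)\in t$ corresponding to the first edge on the path from $t$ towards $\{(A,B),(B,A)\}$ (here one uses nestedness together with the predecessor description of $\sim$), so that $C\sub A$.

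With this in hand, (T3) and the induced-separations statement are immediate. For (T3), if $t_2$ lies on the $t_1$--$t_3$ path and $(A,B)\in O_{t_2}$, then $t_2$ and at least one of $t_1,t_3$ lie in the same component of $T(\Ncal)-\{(A,B),(B,A)\}$, so at least one of $V_{t_1},V_{t_3}$ is contained in $A$; hence every vertex of $V_{t_1}\cap V_{t_3}$ lies in $A$, and as $(A,B)\in O_{t_2}$ was arbitrary this gives $V_{t_1}\cap V_{t_3}\sub V_{t_2}$. For the last sentence, fix an edge $e=\{(A,B),(B,A)\}$ of $T(\Ncal)$ and let $T_1$ be the component of $T(\Ncal)-e$ containing the class of $(A,B)$. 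If $t\in T_1$ then $(A,B)\in O_t$, so $V_t\sub A$; conversely, once (T1) is available, each $a\in A$ lies in some $V_t$, and that node must lie in $T_1$ (if $a\in A\sm B$ because $V_{t'}\sub B\not\ni a$ for $t'$ in the other component; if $a\in A\cap B$ because already $a\in V_{[(A,B)]}$, every member $(A',B')$ of that class satisfying $a\in A'$). Thus $\bigcup_{t\in T_1}V_t=A$ and symmetrically the other side is $B$, so $e$ induces $(A,B)$; since the edges of $T(\Ncal)$ are exactly the pairs $\{(A,B),(B,A)\}$ with $(A,B)\in\Ncal$, the induced separations are precisely those in $\Ncal$.

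It remains to prove (T1) and (T2), and this is where I expect the real work to be. (T2) reduces to (T1): no edge of $G$ runs between $A\sm B$ and $B\sm A$ for any $(A,B)\in\Ncal$, so the two ends of a given edge of $G$ sit on the same side of every separation in $\Ncal$ and it suffices to produce a single node containing both, exactly as for one vertex. For (T1), fix $v\in V(G)$ and orient $\Ncal$ ``towards $v$'', choosing for each $(A,B)\in\Ncal$ the orientation whose big side contains $v$ (when $v$ lies in the separator, either choice works). In the finite case this orientation equals $O_t$ for a unique node $t$, whence $v\in V_t$ by the identity above. In the infinite case an orientation of the edges of $T(\Ncal)$ need not come from a node---it can instead follow a ray towards an end---so the crux is to rule this out for the orientation attached to a vertex. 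A ray of $T(\Ncal)$ compatible with that orientation would yield an infinite strictly descending chain $(A_1,B_1)>(A_2,B_2)>\cdots$ of separations in $\Ncal$ with $v$ in the interior of every $A_i$ and with the $A_i$ strictly shrinking; I expect property~\ref{property_Finiteness} (together with nestedness, in the spirit of Lemmas~\ref{lem_minimally} and~\ref{lem_make-tree}) to be precisely what forbids such a chain, so that the orientation stabilises at a node $t$ and $v\in V_t$. This single point---showing that vertex-orientations reach nodes rather than ends---is the only place where the argument genuinely leaves the finite setting of \cite{confing}, and it is exactly the role of the assumption flagged before Proposition~\ref{prop_tree}.
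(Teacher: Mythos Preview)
Your treatment of (T3) and of the induced-separations clause is correct and is essentially what the paper intends (the paper itself gives no proof beyond citing \cite[Theorem~4.8]{confing} and noting that Proposition~\ref{prop_tree} supplies tree-ness). The reduction of (T2) to (T1) is fine as well.

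The gap is in your argument for (T1). Property~(\ref{property_Finiteness}) does \emph{not} forbid infinite monotone chains in~$\Ncal$: it only bounds the number of separations strictly between two given ones, which is perfectly compatible with an $\omega$-chain. So the step ``(\ref{property_Finiteness}) forbids the chain'' simply fails. In fact the escape to an end can occur under the stated hypotheses: take $G$ to be the infinite star with centre $c$ and leaves $v,\ell_1,\ell_2,\ldots$, and let $\Ncal$ consist of the proper separations $(A_n,B_n)$ and their inverses, where $A_n=\{c,\ell_1,\ldots,\ell_n\}$ and $B_n=\{c,v,\ell_{n+1},\ell_{n+2},\ldots\}$. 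This $\Ncal$ is nested and satisfies~(\ref{property_Finiteness}), yet $T(\Ncal)$ is a ray whose parts are $\{c,\ell_1\},\{c,\ell_2\},\ldots$; the vertex $v$ lies in none of them, so (T1) fails. Your appeal to Lemmas~\ref{lem_minimally} and~\ref{lem_make-tree} would close the gap only under the extra hypotheses that the separations are \lc\ of bounded order---hypotheses that do hold in every application the paper makes of this theorem, but that are not part of the statement you are asked to prove. Under the bare hypotheses, those lemmas are unavailable, and the paper's one-line proof does not address this point either.
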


	We say that a profile $P$ \emph{lives in} a part $V_t$ of a \td\ $(T,\Vcal)$ if for every 
separation 
	$(A,B)$ that is induced by~$(T,\Vcal)$ we have $(A,B)\in P$ if $V_t\sub B$.
	Note that consistency of~$P$ implies that if $P$ lives in a part $V_t$, then $V_t$ is no 
adhesion 
	set.
		
	\section{Profiles and parts of \td s}\label{sec_Parts}
	
	Let $(T,\Vcal)$ be a \td\ of a graph~$G$.
	For $t\in V(T)$, the \emph{torso} $H$ of~$V_t$ is the subgraph of~$G$ induced by~$V_t$ with 
additional edges $xy$ for all $x,z\in V_t$ that lie in a common adhesion set in~$V_t$.
	A separation $(A,B)$ of~$G$ induces a separation
	\[
	(A_H,B_H):=(A\cap V_t,B\cap V_t)
	\]
	of~$H$ if and only if $(A,B)$ does not separate any adhesion set in~$H$.
	Note that a proper separations of~$G$ need not induce proper separations of~$H$.
	For a set $\s$ of separations of~$G$ let
	\[
	\s_H:=\{(A_H,B_H)\mid (A,B)\in \s, (A_H,B_H)\text{ is a separation of }H\}
	\]	
	be the set of separations induced by~$\s$ on~$H$.
	That way, a profile of~$G$ induces a set of separations of~$H$.
	In the following proposition, we will see that this induced set of separations is indeed a 
profile in 
	the cases we are interested in.
	Furthermore, we will prove that the induced separations on a torso separate the induced 
profiles in 
	a best possible way.
	
	\begin{prop}\label{prop_ProfilesInTD}
		Let $G$ be a graph, let $k\in\N$ and let $\Pcal$ be a set of robust principal profiles of~$G$ 
each of which is 
		an $\ell$-profile for some $\ell> k$.
		Let $(T,\Vcal)$ be a \td\ of adhesion at most $\kappa(\Pcal,G)$ such that $N(C)=S$ 
for every 
		adhesion set $S$ of $(T,\Vcal)$ and every component $C$ of $G-S$.
		Let $V_t$ be a part of~$(T,\Vcal)$ and $H$ be its torso.	Assume that all separations induced by edges of the decomposition tree are proper.
		Then the following hold.
		\begin{enumerate}[{\rm (i)}]
			\item\label{itm_ProfilesInTD1} For every (robust) principal $\ell$-profile $P$ that lives 
in~$V_t$, where $\ell$ 
			is larger than the adhesion of $(T,\Vcal)$, the set $P_H$ is a (robust) principal 
$\ell$-profile of~$H$;
			\item\label{itm_ProfilesInTD2} 
					for all separations $(A,B)$ of~$G$ with 
$A\cap B\sub V_t$ that distinguish profiles 
			in~$\Pcal$ that live in~$V_t$, the pair $(A\cap V_t,B\cap V_t)$ is a 
			separation of~$H$ of the same order and distinguishes the induced profiles;
			\item\label{itm_ProfilesInTD3} for all distinguishable profiles $P_1,P_2\in 
\Pcal_H$ and all distinguishable profiles $Q_1,Q_2\in\Pcal$ such that 
			$Q_i$ induces $P_i$ for $i=1,2$, there is a separation $(A,B)$ of~$G$ that 
distinguishes $Q_1$ and 
			$Q_2$ efficiently such that $({A\cap V_t},{B\cap V_t})$ is a separation of 
order $|A\cap B|$ that 
			distinguishes $P_1,P_2$ efficiently;
			\item\label{itm_ProfilesInTD4} if the subset $\Pcal'$
			of~$\Pcal$ consisting of all profiles that live in~$V_t$ contains at least two elements, then 
$\kappa(\Pcal_H,H)=\kappa(\Pcal',G)$.
		\end{enumerate}
	\end{prop}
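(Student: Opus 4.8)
The plan is to prove the four items of Proposition~\ref{prop_ProfilesInTD} more or less in the order stated, reusing the lemmas of Section~\ref{sec_Prelim}.

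First I would set up the basic dictionary between separations of $G$ that do not cut an adhesion set and separations of the torso $H$. For such $(A,B)$, the key observation is that $(A\cap V_t,B\cap V_t)$ has separator $A\cap B\cap V_t$, and when moreover $A\cap B\sub V_t$ this equals $A\cap B$; this gives the ``same order'' clause of (ii). The main point to check here is that a separation of $G$ that distinguishes two profiles living in $V_t$ automatically does not separate any adhesion set: if it separated the adhesion set $S=V_s\cap V_t$, then since $N(C)=S$ for every component $C$ of $G-S$ the separation would have order $\geq|S|$ on both sides in a way incompatible with both profiles containing the separation induced by the edge $st$ oriented towards $V_t$ — more carefully, one uses that each such profile, living in $V_t$, orients the edge separation towards $V_t$, so both profiles agree on it, hence a distinguishing separation must be ``inside'' the branch of $V_t$; iterating over all edges at $t$ confines the separator to $V_t$. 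This confinement argument, together with consistency, is what I expect to be the main obstacle, since one has to be careful that ``lives in $V_t$'' is used correctly and that the hypothesis that all induced separations are proper is what prevents degenerate behaviour.

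For (i), I would check the profile axioms (P1), (P2), principality and robustness for $P_H$ one at a time. Consistency and (P2) transfer because the maps $(A,B)\mapsto(A_H,B_H)$ is order-preserving on the separations that induce separations of $H$, and corner operations in $H$ are induced by corner operations in $G$ once one notes $A_H\cap C_H=(A\cap C)_H$ etc.; the subtlety is that a separation of $H$ of order $<\ell$ need not come from a separation of $G$ of order $<\ell$ — but it does come from \emph{some} separation of $G$, namely by adding to each side the components of $G-(A\cap B)$ hanging off the appropriate adhesion sets, and this added-back separation has the same separator, so $P$ decides it and $P_H$ inherits the decision; this is the step where $N(C)=S$ for adhesion sets $S$ is essential. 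Principality of $P_H$ follows from principality of $P$ because a family of separations of $H$ with a common separator lifts to a family in $G$ with the same common separator, and a vertex witnessing non-emptiness in $G$ lies in $V_t$ hence in $H$. Robustness is analogous: an $n$-robustness violation in $H$ (with some $(C,D)$ of order $\leq n$ in $H$) lifts to one in $G$, using that the lifted corner separations have the same orders as in $H$ because all the relevant separators sit inside $V_t$.

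Item (ii) is then essentially the order computation above plus the remark that $P_1\neq P_2$ in $H$ when $P=Q_1,Q_2$ are distinguished in $G$ by such an $(A,B)$. For (iii), given distinguishable $P_1,P_2\in\Pcal_H$ induced by $Q_1,Q_2\in\Pcal$, I would take a separation $(A,B)\in\Reff(Q_1,Q_2)$ in $G$ (which exists and is proper), and argue as in the confinement step that it does not separate an adhesion set, so it induces a separation of $H$ of the same order that distinguishes $P_1,P_2$; efficiency in $H$ follows because any strictly smaller separation of $H$ distinguishing $P_1,P_2$ would lift back (add back the hanging components) to a separation of $G$ of the same smaller order distinguishing $Q_1,Q_2$, contradicting efficiency of $(A,B)$. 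Finally (iv) is the combination of (ii) and (iii): (iii) applied to a pair of profiles realising $\kappa(\Pcal',G)$ gives a separation of $H$ of that order distinguishing two profiles of $\Pcal_H$, so $\kappa(\Pcal_H,H)\leq\kappa(\Pcal',G)$; conversely a separation of $H$ of order $\kappa(\Pcal_H,H)$ distinguishing $P_1,P_2\in\Pcal_H$ lifts to a separation of $G$ of the same order distinguishing the corresponding $Q_1,Q_2\in\Pcal'$, giving the reverse inequality. The one place I would slow down and write carefully is the equivalence ``$A\cap B\sub V_t$ and $(A,B)$ distinguishes profiles living in $V_t$'' $\iff$ ``$(A,B)$ does not separate an adhesion set at $t$'', since everything else is bookkeeping on top of it.
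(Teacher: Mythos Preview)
Your overall architecture matches the paper's: set up the two-way dictionary between separations of $G$ with separator in $V_t$ and separations of $H$ (the paper's lift $(A,B)\mapsto(A^G,B^G)$ is exactly your ``add back the hanging components''), then verify (P1), (P2), principality and robustness for $P_H$ by transporting along this dictionary. That part is fine, and so is your derivation of (iv) from (ii) and (iii).

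There is a real issue in your treatment of (iii) and in the ``confinement'' step you single out. For (ii) the hypothesis $A\cap B\subseteq V_t$ is \emph{given}, so no confinement is needed there; the only thing to check is that such an $(A,B)$ does not cut an adhesion set, which the paper does directly from $N(C)=S$ and properness of the edge separations. For (iii), however, you start from a $G$-efficient $(A,B)\in\Reff(Q_1,Q_2)$ and want to conclude $A\cap B\subseteq V_t$. Your sketch does not establish this: the fact that both profiles orient every edge separation towards $V_t$ only tells you they \emph{agree} on those separations, not that a distinguishing separation has its separator inside $V_t$. In particular, the ``equivalence'' you announce at the end is false: ``$(A,B)$ does not separate any adhesion set at $t$'' is strictly weaker than ``$A\cap B\subseteq V_t$''; a separator can sit partly outside $V_t$ without splitting any adhesion set. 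Pushing an efficient separation into $V_t$ via corners with the edge separations can be made to work, but it needs submodularity together with the profile axioms and is not the one-liner you suggest.

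The paper sidesteps this by going the other way in (iii): it starts from a separation of $H$ distinguishing $P_1,P_2$ efficiently, lifts it to $(A,B)$ in $G$ with the same separator (so $A\cap B\subseteq V_t$ automatically), and then argues that $(A,B)$ must distinguish $Q_1,Q_2$ efficiently, because any $(C,D)$ distinguishing $Q_1,Q_2$ induces $(C_H,D_H)$ distinguishing $P_1,P_2$ of order at most $|C\cap D|$. This direction is cleaner precisely because the lift comes for free with separator in $V_t$; you never have to argue that an arbitrary $G$-efficient separation is confined to $V_t$. I would recommend reorganising (iii) along these lines rather than trying to prove the confinement claim in the strong form you state.
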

	
	\begin{proof}
		Let $(A,B)$ be a separation of the graph $G$ whose separator lies in~$V_t$.
		Set $(A_H,B_H):=(A\cap V_t,B\cap V_t)$.
		Since the separator $A\cap B$ is included in the vertex $V(H)$ of the torso $H$, we have $|A_H\cap B_H|=|A\cap B|$.
		
		Suppose for a contradiction $(A_H,B_H)$ is not a separation of the torso~$H$. 
		Then there is an edge $ab$ in~$H$ with 
$a\in A\sm B$ and 
		$b\in B\sm A$. This edge does not lie in~$G$ and hence both its end vertices lie in a common 
adhesion set. Let $x$ be an edge of the decomposition tree whose adhesion set contains the vertices $a$ and $b$. Let $(X,Y)$ be the separation corresponding to the edge $x$. Either the vertex set $X\sm Y$ or $Y\sm X$ is disjoint from the part $V_t$. By symmetry, we may assume and we do assume that $X\sm Y$ is disjoint from $V_t$. As $(X,Y)$ is proper by assumption, the set $X\sm Y$ is nonempty. Let $C$ be a component of $G-(X\cap Y)$ included in $X\sm Y$.  By the assumptions on $(T,\Vcal)$ and its adhesion sets, we have $X\cap Y=N(C)$.
So $C$ includes a path whose endvertices are adjacent to the vertices $a$ and $b$. As this path is disjoint from the separator $A\cap B$, this violates our assumption that $(A,B)$ is a separation of the graph $G$. Hence it must be that $(A_H,B_H)$ is a separation of the torso~$H$.

		Let $(A,B)$ be a separation of~$H$.
		By definition, all adhesion sets of $(T,\Vcal)$ induce complete graphs in~$H$.
		Thus, every adhesion set lies either in~$A$ or in~$B$.
		We define a separation of~$G$ by adding all components $C$ of $G-V_t$ to~$A$ if 
$N(C)\sub A$ and to 
		$B$ otherwise.
		It is easy to see that the resulting pair $(A^G,B^G)$ is a separation of~$G$.
		Its order is $|A\cap B|$ since we did not add anything to $A$ and $B$ 
simultaneously.
		So if $(A^G,B^G)$ lies in a profile $P\in\Pcal$, then $(A,B)\in P_H$.
		
		Let $P$ be an $\ell$-profile that lives in~$V_t$ such that $\ell$ is larger than 
the 
adhesion of 
		$(T,\Vcal)$.
		First, we will show that $P_H$ is consistent.
		Let $(A_H,B_H)$ and $(C_H,D_H)$ be separations of~$H$ such that 
$(C_H,D_H)\leq(A_H,B_H)$ and 
		$(A_H,B_H)\in P_H$.
		Suppose that $(D_H,C_H)$ lies in~$P_H$.
		So we may assume that there are $(A,B),(D,C)\in P$ whose induced separations in~$H$ 
are 
		$(A_H,B_H),(D_H,C_H)$, respectively.
		Note that we have $A\cap B=A_H\cap B_H$ and $C\cap D=C_H\cap D_H$ by definition of 
induced separations.
Since $P$ is a principal $\ell$-profile, there is a component $K$ of $C\sm D$ with $(V(G)\sm K,K\cup N(K))\in P$.
If $K\cap V_t\neq\es$, then there is a vertex in $K\cap C_H\sub K\cap (A_H\sm B_H)$.
Since $A\cap B=A_H\cap B_H\sub D_H$, we conclude $K\sub A$.
As $(K\cup N(K),V(G)\sm K)\leq (A,B)\in P$, the separation $(K\cup N(K),V(G)\sm K)$ lies in~$P$ by~(P1).
This is a contradiction to (P1) for~$P$.
So $K$ contains no vertex of~$C_H$.
In particular $K\cap V_t=\es$.
Let $e$ be an edge of~$T$ that is incident with~$t$ and such that for the separation $(A_e,B_e)$ induced by~$e$ we have $V_t\sub A_e$ and $K\sub B_e$.
Then we have $(A_e,B_e)\leq (V(G)\sm K,K\cup N(K))\in P$.
So (P1) implies $(A_e,B_e)\in P$ and hence $P$ does not live in~$t$, a contradiction to its choice.
		This shows that $P_H$ is consistent.
		
		Let $(A_H,B_H),(C_H,D_H)\in P_H$ and let $(A,B),(C,D)\in P$ induce them.
		By (P2), we have $(E,F):=(B\cap D,A\cup C)\notin P$.
If the order of $(E,F)$ is less than~$k$, then $(F,E)\in P$ and hence $(F_H,E_H)\in P_H$.
So consistency of~$P_H$ implies $(E_H,F_H)\notin P_H$.
If the order of $(E,F)$ is at least~$k$, then the same holds for the order of $(E_H,F_H)$ and hence it does not lie in~$P_H$.
		Thus, (P2) follows for~$P_H$ and $P_H$ is a profile.
		Following the construction of $(A^G,B^G)$ for a separation $(A,B)$ of~$H$ shows 
that 
$P_H$ is an 
		$\ell$-profile.

As $P$ is a principal $\ell$-profile, there is for every vertex set $S$ of size less than~$k$ a component $K$ of $G-S$ such that $(A,B):=(V(G)\sm K,K\cup N(K))\in P$.
As in the proof of (P1) for~$P_H$, it follows that $(A_H,B_H)$ is a proper separation of~$H$.
Then each separation $(C,D)$ with $S=(C,D)$ lies in~$P_H$ if and only if $D$ contains~$B_H$.
It follows that $P_H$ is principal.
		
		Now let $P$ be $k$-robust.
		Let $(A_H,B_H)\in P_H$ and let $(C_H,D_H)$ be a separation of~$H$ of order at 
most~$k$.
		Then there are $(A,B)\in P$ and a separation $(C,D)$ of order at most~$k$ that 
induce $(A_H,B_H)$ and $(C_H,D_H)$ in~$H$, 
		respectively.
		Suppose that $(B_H\cap C_H,A_H\cup D_H)$ and $(B_H\cap D_H,A_H\cup C_H)$ are 
in~$P_H$ and both have 
		order less than $|A\cap B|$.
		As $P$ is $k$-robust, we may assume that $(B\cap C,A\cup D)\notin P$.
		As the orders of $(B\cap C,A\cup D)$ and $(B_H\cap C_H,A_H\cup D_H)$ coincide, we 
conclude $(A\cup 
		D, B\cap C)\in P$.
		So by definition, $(A_H\cup D_H,B_H\cap C_H)\in P_H$ which contradicts consistency 
of~$P_H$ as 
		$(B_H\cap C_H,A_H\cup D_H)\in P_H$.
		Thus, (\ref{itm_ProfilesInTD1}) holds.
		
		We have already seen that separations $(A,B)$ whose separators lie in~$V_t$ induce 
separations 
		in~$H$ and it is obvious from the definitions that, if $(A,B)$ distinguishes two 
profiles, the 
		induced separation distinguishes the induced profiles.
		Thus, (\ref{itm_ProfilesInTD2}) holds.
		
		To prove~(\ref{itm_ProfilesInTD3}), let $P_1,P_2\in \Pcal_H$ and $Q_1,Q_2\in\Pcal$ 
such that $Q_i$ 
		induces $P_i$ for $i=1,2$.
		Let $(A,B)$ be a separation of~$G$ such that $(A_H,B_H)$ distinguishes $P_1$ and 
$P_2$ efficiently.
		Since any separation $(C,D)$ of~$G$ that separates $Q_1$ and $Q_2$ efficiently, 
also 
induces a 
		separation $(C_H,D_H)$ that distinguishes $P_1$ and~$P_2$, we conclude $|A\cap 
B|\leq|C\cap D|$ and 
		thus we may assume $(A,B)\in Q_1$ and either $(A,B)$ or $(B,A)$ lies in~$Q_2$.
		If $(A,B)\in Q_2$, then we conclude $(A_H,B_H)\in P_1\cap P_2$.
		But as it distinguishes $P_1$ and $P_2$, we also have $(B_H,A_H)\in P_2$, a 
contradiction to 
		consistency of~$P_2$.
		Thus, $(A,B)$ distinguishes $Q_1$ and $Q_2$.
		Any separation of smaller order than $(A,B)$ that distinguishes $Q_1$ and $Q_2$ 
induces a 
		separation of smaller order than $(A_H,B_H)$ distinguishing $P_1$ and~$P_2$.
		Thus, $(A,B)$ distinguishes $Q_1$ and $Q_2$ efficiently.
		This shows~(\ref{itm_ProfilesInTD3}).
		
		Finally, (\ref{itm_ProfilesInTD4}) follows immediately from 
(\ref{itm_ProfilesInTD2}) and 
		(\ref{itm_ProfilesInTD3}).
	\end{proof}
	
	\begin{lemma}\label{lem_torso}
		Let $G$ be a well-separable graph with respect to a set $\Pcal$ of robust principal profiles 
of~$G$ of order $k+1$ with $k=\kappa(\Pcal,G)$.
		Let $(T,\Vcal)$ be a $k$-balanced \td.
		Let $X$ be a torso of~$(T,\Vcal)$ and let $\Pcal'$ be the set of profiles in~$X$ 
that are induced by profiles in~$\Pcal$.
		Then $\Scal(k,\Pcal',X)=\emptyset$ and $\Rcal(k-1,\Pcal',X)=\emptyset$.
	\end{lemma}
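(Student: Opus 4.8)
The plan is to derive both equalities from Proposition~\ref{prop_ProfilesInTD} together with the well-separability of $G$. First I would verify that the hypotheses of Proposition~\ref{prop_ProfilesInTD} are met for $(T,\Vcal)$ and $\Pcal$: its members are robust principal $(k+1)$-profiles with $k+1>k$; since $(T,\Vcal)$ is $k$-balanced its adhesion equals $k=\kappa(\Pcal,G)$; and for every adhesion set $S$ and every component $C$ of $G-S$ we have $N(C)=S$, because the separation induced by the corresponding edge has order $k=\kappa(\Pcal,G)$ and $G$ is well-separable. (As there, one assumes in addition that every edge of $(T,\Vcal)$ induces a proper separation; for the \td s we later build this holds because those induced separations are relevant, and in general it can be arranged by discarding parts that coincide with an adhesion set.) Then $\Pcal'=\{P_X\mid P\in\Pcal\text{ lives in }V_t\}$, and by Proposition~\ref{prop_ProfilesInTD}(i) it consists of robust principal $(k+1)$-profiles of $X$; we may assume $|\Pcal'|\ge 2$, since otherwise no separation distinguishes two members of $\Pcal'$ and both claims are immediate.

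For $\Rcal(k-1,\Pcal',X)=\emptyset$ I would argue by contradiction. If a separation of $X$ of order at most $k-1$ distinguishes $P_1,P_2\in\Pcal'$, choose $Q_1,Q_2\in\Pcal$ inducing them; these are distinct, hence distinguishable (two distinct $(k+1)$-profiles are always distinguished by a separation of order at most $k$, as all their members have order at most $k$). Proposition~\ref{prop_ProfilesInTD}(iii) then provides a separation $(C,D)$ of $G$ distinguishing $Q_1,Q_2$ efficiently such that $(C\cap V_t,D\cap V_t)$ has order $|C\cap D|$ and distinguishes $P_1,P_2$ efficiently. As $(C,D)$ distinguishes two profiles of $\Pcal$ we get $|C\cap D|\ge\kappa(\Pcal,G)=k$; but then the assumed order-$(k-1)$ separation distinguishes $P_1,P_2$ with strictly smaller order than $(C\cap V_t,D\cap V_t)$, contradicting efficiency of the latter.

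For $\Scal(k,\Pcal',X)=\emptyset$, let $(A,B)\in\Rcal(k,\Pcal',X)$; by the previous paragraph its order is exactly $k$, so $S^\ast:=A\cap B$ has size $k$, and as in the proof of Proposition~\ref{prop_ProfilesInTD} it extends to a separation $(A^G,B^G)$ of $G$ of order $k$ with $A^G\cap B^G=S^\ast$. Suppose some component $C$ of $X-S^\ast$ were degenerated, i.e.\ $N_X(C)\subsetneq S^\ast$. The heart of the proof is the structural fact that $C$ lies in a single component $\hat C$ of $G-S^\ast$, that $\hat C\cap V_t=C$, and that $N_G(\hat C)\subseteq N_X(C)$. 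To establish this I would first show that every component $D$ of $G-V_t$ is in fact a component of $G-S_D$, where $S_D$ is the adhesion set through which $D$ hangs off $V_t$, and hence $N_G(D)=S_D$ by well-separability of $G$ (since $|S_D|=k$); moreover $S_D$ spans a clique in the torso $X$. Consequently any excursion of a path in $G-S^\ast$ out of $V_t$ runs through some such $D$ and enters and leaves at two vertices of $S_D\setminus S^\ast$ that are adjacent in $X$, so replacing each excursion by that edge converts a $(G-S^\ast)$-path between vertices of $V_t$ into a walk in $X-S^\ast$; this, together with the dual remark that a clique edge of $X-S^\ast$ on $S_D$ can be realised by a path through $D$ in $G-S^\ast$, gives all three assertions. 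But then $\hat C$ is a degenerated component of $G-S^\ast$, contradicting well-separability of $G$ applied to the order-$k$ separation $(A^G,B^G)$. Hence no component of $X-S^\ast$ is degenerated, and since $(A,B)$ was an arbitrary element of $\Rcal(k,\Pcal',X)$ we conclude $\Scal(k,\Pcal',X)=\emptyset$.

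The step I expect to be the main obstacle is this structural comparison of $X-S^\ast$ with $G-S^\ast$: the extra clique edges of the torso could a priori merge components of $G-S^\ast$ or shrink neighbourhoods. The crucial input — and the point where $k$-balancedness and well-separability of $G$ are genuinely used — is that each such clique edge sits on an adhesion set that is the full neighbourhood in $G$ of a piece hanging off $V_t$; therefore it introduces no new connectivity in $G-S^\ast$ and costs no neighbour, so a degenerated torso component would already show up, via $(A^G,B^G)$, as a degenerated component of $G-S^\ast$.
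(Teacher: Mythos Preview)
Your proposal is correct and follows essentially the same approach as the paper: both deduce $\Rcal(k-1,\Pcal',X)=\emptyset$ from Proposition~\ref{prop_ProfilesInTD}, and for $\Scal(k,\Pcal',X)=\emptyset$ both lift a relevant torso separation $(A,B)$ to a separation of~$G$ with the same separator and argue that a degenerated torso component would sit inside a degenerated component of $G-(A\cap B)$, contradicting well-separability. The paper's proof simply asserts this last implication in one sentence, whereas you supply the path-replacement argument (torso clique edges correspond to detours through pieces $D$ with $N_G(D)=S_D$, so connectivity and neighbourhoods match up) that actually justifies it; your version is more careful but not a different method.
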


	\begin{proof}
		Proposition~\ref{prop_ProfilesInTD} implies $\Rcal(k-1,\Pcal',X)=\es$.
		Let $(A,B)\in\Rcal(k,\Pcal',X)$ and let $C$ be a component of $X-(A\cap B)$.
		By Proposition~\ref{prop_ProfilesInTD}\,(\ref{itm_ProfilesInTD3}), there is a 
separation $(A^G,B^G)$ that induces $(A,B)$ on~$X$ such that it distinguishes those profiles 
of~$\Pcal$ that induce profiles of~$\Pcal'$ that are distinguished by $(A,B)$.
	
		Let us suppose that there is a component $C$ of $X-(A\cap B)$ with $N(C)\subsetneq 
A\cap B$.
		Then this lies in a component $K$ of $G-(A\cap B)$ with $N(K)\subsetneq A\cap B$, 
which is impossible because of $\Scal(k,\Pcal)=\es$.
	\end{proof}

	Now we are going to construct a \td\ of an arbitrary graph with a set of profiles such that 
the \td\ 
	has a unique part in which all profiles live and whose torso is well-separable with respect 
to the 
	induced profiles.
	
	\begin{prop}\label{prop_makeNice}
		Let $G$ be a graph and let $\Pcal$ be a set of robust principal profiles of~$G$ that are pairwise 
		$\kappa(\Pcal,G)$-distinguishable and each of which is an $\ell$-profile for some 
		$\ell>\kappa(\Pcal,G)$.
		Then there exists a \td\ $(T,\Vcal)$ of adhesion less than $\kappa(\Pcal,G)$ that 
is 
canonical with respect to~$\Pcal$ such that 
		there exists a unique part $V_t$ of $(T,\Vcal)$ in which all profiles 
of~$\Pcal$ live and such that 
		the torso $H$ of~$V_t$ is well-separable with respect to~$\Pcal_H$. Moreover, all separations corresponding to edges of the decomposition tree are proper. 
	\end{prop}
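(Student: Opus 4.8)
The plan is to build $(T,\Vcal)$ by a transfinite iteration of Lemma~\ref{lem_separation_exists}, peeling off the profiles that get separated from the bulk at each stage, and to check afterwards that all the requested properties survive the limit steps. First I would reduce to the well-separable case: if $G$ is already well-separable with respect to~$\Pcal$, we may take the trivial decomposition with one part $V_t=V(G)$ and there is nothing to do, so assume $\Scal(\kappa(\Pcal,G),\Pcal,G)\neq\es$. The degenerated components of $G-(A\cap B)$ for relevant separations $(A,B)$ of order $\kappa:=\kappa(\Pcal,G)$ give us, in a canonical way, a set $\Scal(\kappa,\Pcal,G)$ of separations of order $<\kappa$; I would first argue this set is nested (the corner-argument machinery of Lemma~\ref{lem_adjacent} and Lemma~\ref{lem_inequality} together with the fact that degenerated components are nested with everything they are contained in, much as in the proof of Lemma~\ref{lem_separation_exists}), satisfies property~(\ref{property_Finiteness}) via Lemma~\ref{lem_make-tree} after replacing each separation by a \lc\ one inside its small side, and is canonical with respect to~$\Pcal$ since the construction only refers to $\Pcal$, $G$ and the component structure. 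Feeding this nested set into Theorem~\ref{thm_nestedToTD} yields a \td\ $(T_0,\Vcal_0)$ of adhesion $<\kappa$ whose induced separations are proper.

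Next I would iterate. Given the current \td, look at each part $V_s$, form its torso $H_s$, and push $\Pcal$ down to $\Pcal_{H_s}$; by Proposition~\ref{prop_ProfilesInTD} the induced profiles are again robust principal $\ell'$-profiles and by~(\ref{itm_ProfilesInTD4}) the parameter $\kappa(\Pcal_{H_s},H_s)$ equals $\kappa(\Pcal',G)$ for the subcollection $\Pcal'$ living in~$V_s$. If some torso is still not well-separable with respect to its induced profiles, apply the above construction to that torso and refine $V_s$ accordingly; this strictly increases the set of distinguished profiles or strictly decreases some degenerator, so by transfinite recursion (taking, at limit stages, the nested union of all separations produced so far — this stays nested and, by Lemma~\ref{lem_make-tree} applied to the \lc\ representatives of order $<\kappa$, still satisfies~(\ref{property_Finiteness})) the process terminates with a \td\ in which the part where (the remnants of) all of~$\Pcal$ live has a well-separable torso. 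Canonicity is preserved throughout because at every stage the refinement step is itself canonical and commutes with isomorphisms of $(G,\Pcal)$, and nested unions of canonically-chosen sets are canonical.

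Finally I would verify the bookkeeping: adhesion stays $<\kappa(\Pcal,G)$ because every separation introduced has order $<\kappa$ at the stage it is introduced and $\kappa$ is constant along the relevant branch by Proposition~\ref{prop_ProfilesInTD}\,(\ref{itm_ProfilesInTD4}); uniqueness of the part $V_t$ in which all profiles live follows from consistency of the profiles together with the remark after Theorem~\ref{thm_nestedToTD} that a profile lives in a non-adhesion part, so two distinct such parts would be separated by an induced separation of order $<\kappa$, contradicting pairwise $\kappa$-distinguishability being the minimum; and properness of all induced separations is exactly the output guarantee of Theorem~\ref{thm_nestedToTD} for the (\ref{property_Finiteness})-satisfying nested set we built. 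The main obstacle I expect is the limit step: I must ensure that a transfinite union of the nested sets still satisfies property~(\ref{property_Finiteness}) and that the profiles do not ``escape to infinity'' along an infinite descending chain of parts — here the principality hypothesis is essential, guaranteeing (via Lemma~\ref{lem_compsep}-type reasoning) that the profile is pinned down in a genuine part rather than dissipating across infinitely many adhesion sets, and Lemma~\ref{lem_minimally} is what keeps the chains of \lc\ separations of bounded order finite so that the tree structure does not degenerate.
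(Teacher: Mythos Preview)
Your approach differs substantially from the paper's, which is direct and non-iterative. The paper simply lets $\Ccal$ be the set of all degenerated components of $G-(A\cap B)$ for $(A,B)\in\Reffk(\Pcal,G)$, takes $T$ to be a \emph{star} with one leaf per element of~$\Ccal$, assigns $V_y:=\varphi(y)\cup N(\varphi(y))$ to each leaf and $V_x:=V(G)\sm\bigcup\Ccal$ to the centre, and then verifies in one shot that this is a tree-decomposition of adhesion $<\kappa$ and that the torso of the central part is already well-separable. No iteration, no transfinite recursion, and no appeal to Theorem~\ref{thm_nestedToTD}. The entire technical content reduces to two facts: (a) distinct degenerated components are disjoint, and (b) every degenerated component is disjoint from every separator $C\cap D$ with $(C,D)\in\Reffk(\Pcal,G)$. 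Fact~(b) is the real work and is proved by a corner argument combined with Lemma~\ref{lem_compsep}; your gesture toward ``corner-argument machinery'' is in the right spirit, but Lemma~\ref{lem_inequality} (which concerns crossing numbers) is not the right tool here.

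Your iteration is therefore unnecessary: once (a) and (b) are established, the central torso is well-separable in one step and there is nothing to refine. More seriously, your opening invocation of Lemma~\ref{lem_separation_exists} is misplaced, since that lemma \emph{assumes} well-separability and so cannot be the mechanism that produces it. If you carried out your first step correctly (proving the degenerator is nested amounts exactly to proving (a) and (b)), you would find the recursion halts immediately; but as written, your termination argument (``strictly increases the set of distinguished profiles or strictly decreases some degenerator'') provides no well-founded measure and no reason why the degenerator of a torso should be smaller than that of the ambient graph, so the transfinite framework is both superfluous and unjustified.
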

	
	\begin{proof}
		Let $\Ccal$ be the set of all degenerated components of separations 
$(A,B)\in\Reffk(\Pcal,G)$.
		Let $T$ be a star with $|\Ccal|$ leaves.
		Let $x$ be the central vertex of~$T$ and let $\varphi$ be a bijection from the set 
of leaves of~$T$ 
		to~$\Ccal$.
		For a leaf $y$, we set $V_y:= \varphi(y)\cup N(\varphi(y))$ and we set $V_x:= 
V(G)\sm \bigcup\Ccal$. 
		We claim that $(T,\Vcal)$ with $\Vcal:=\{V_z\mid z\in V(T)\}$ is a \td\ of adhesion 
less than 
		$\kappa(\Pcal,G)$.
		
		Since the adhesion set of an edge $xy$ has size at most $N(\varphi(y))$, it follows 
from the choice 
		of~$\Ccal$ that the adhesion is less than $\kappa(\Pcal,G)$.
		To show that $(T,\Vcal)$ is a \td, it suffices to show (T3).
		This follows immediately once we showed the following two properties.
		\begin{enumerate}[(a)]
			\item\label{itm_pf_makeNice1} Distinct elements of~$\Ccal$ are disjoint;
			\item\label{itm_pf_makeNice2} elements of~$\Ccal$ are disjoint from 
separators of separations in 
			$\Reffk(\Pcal,G)$.
		\end{enumerate}
		Let $(A,B)$ and $(C,D)$ be two separations in $\Reffk(\Pcal,G)$ and let $X$ be a 
degenerated component of 
		$G-(A\cap B)$.
		If there is also a degenerated component $Y$ of $G-(C\cap D)$ with $X\cap Y\neq\es$ 
that is distinct 
		from~$X$, then either $X$ intersects $C\cap D$ or $Y$ intersects $A\cap B$.
		Thus, (\ref{itm_pf_makeNice2}) implies (\ref{itm_pf_makeNice1}) and it remains to 
prove  
		(\ref{itm_pf_makeNice2}).
		
		Let us suppose $X\cap (C\cap D)\neq\es$ for some $(C,D)\in\Reffk(\Pcal,G)$.
		Without loss of generality, we may assume $X\sub B$.
		Let $P,P',Q,Q'\in\Pcal$ such that $(A,B)$ distinguishes $P$ and $P'$ efficiently 
and 
$(C,D)$ 
		distinguishes $Q$ and~$Q'$ efficiently.
		We may assume $(A,B)\in P$ and $(C,D)\in Q$.
		We will show that one corner separation, either $(B\cap C, A\cup D)$ or $(B\cap D, 
A\cup C)$, has 
		order at most $|A\cap B|$.
		Let us suppose that both have order at least $|A\cap B|+1$.
		As the orders of opposite corner separations sum to $2|A\cap B|$, the orders of 
$(A\cap C,B\cup D)$ 
		and $(A\cap D, B\cup C)$ are less than $|A\cap B|$.
		Since these two corner separations are less than $(A,B)$, they lie in~$P$.
		But neither $(B\cup D,A\cap C)$ nor $(B\cup C, A\cap D)$ can lie in~$P'$ as their 
orders are less 
		than $|A\cap B|$ but $(A,B)\in \Reffk(\Pcal,G)$.
		So $(A\cap C,B\cup D)$ and $(A\cap D,B\cup C)$ lie in~$P'$.
		Then (P2) implies $(B,A)\notin P'$.
		This contradiction shows that either $(B\cap C, A\cup D)$ or $(B\cap D, A\cup C)$ 
has order at most $|A\cap B|$.
		Let us denote this separation by $(E,F)$.
		Note that $X\cap(C\cap D)$ lies in the separator of $(E,F)$.
		By Lemma~\ref{lem_compsep} there is a component $K$ of $E\sm F$ such that $(K\cup 
N(K),V(G)\sm K)$ 
		lies in $\Reffk(\Pcal,G)$.
		Then we have $X\cap(C\cap D)\sub N(K)$.
		This implies $K\sub X$.
		But then $(X\cup N(X),V(G)\sm X)$ distinguishes two profiles in~$\Pcal$ while its 
order is less than 
		$\kappa(\Pcal,G)$, which is a contradiction.
		This shows~(\ref{itm_pf_makeNice2}).
		So we have verified that $(T,\Vcal)$ is a \td.
		It is canonical as we never made a choice in its construction.
		
		Since the adhesion of $(T,\Vcal)$ is less than $\kappa(G,\Pcal)$, it only remains 
to 
show that the 
		torso $H$ of~$V_x$ is well-separable with respect to~$\Pcal_H$.
		
		Proposition~\ref{prop_ProfilesInTD}\,(\ref{itm_ProfilesInTD4}) implies 
		$\kappa(\Pcal,G)=\kappa(\Pcal_H,H)$.
		Let $(A,B)\in\Reffk(\Pcal_H,H)$.
		Let $(A^G,B^G)$ be a separation of~$G$ of order $|A\cap B|$ that induces $(A,B)$ 
on~$H$ as 
		constructed in the proof of Proposition~\ref{prop_ProfilesInTD}: if $N(C)\sub A$ 
for 
a component $C$ 
		of $G\setminus V_x$, add $C$ to~$A$ and otherwise to~$B$.
		Then $(A^G,B^G)\in\Reffk(\Pcal,G)$.
		Let us suppose that there is a component $C$ of $H-(A\cap B)$ with $N(C)\subsetneq 
A\cap B$.
		Let $C^G$ be the component of~$G-(A\cap B)$ that contains~$C$.
		Since $C^G\cap V_x\neq\es$, the construction of $(T,\Vcal)$ implies $N(C^G)=A\cap 
B$.
		Let $u\in N(C^G)\sm N(C)$ and $v\in C^G$ a neighbour of~$u$.
		Let $u'\in N(C)$ and $v'$ be a neighbour of~$u'$ in~$C$.
		Let $P$ be a $u$-$u'$ path all of whose inner vertices lie in~$C^G$ and whose first 
and last edges 
		are $uv$, $v'u'$, respectively.
		We construct a $u$-$u'$ path $P'$ in~$H$: whenever $P$ leaves $V_x$, it does so 
through an adhesion 
		set and must reenter $V_x$ through the same adhesion set by (T3).
		We then replace this subpath by the edge between its two end vertices, which lies 
in~$H$.
		The resulting path has no vertices outside of~$P$ and thus no vertex of $A\cap B$.
		But it contains all vertices of~$P$ that lie in~$V_x$.
		So it contains~$v'$ and, thus, it contains a vertex of~$C$.
		Hence, all inner vertices of~$P'$ lies in~$C$.
		This contradicts $u\notin N(C)$ and shows that $H$ is well-separable with respect 
to~$\Pcal_H$.

The \lq Moreover-part\rq, directly follows from (a) and (b). 
	\end{proof}
	
	\section{The case: fixed $k$}\label{sec_k}
	
	Let $\Ncal$ be a set of separations and $\Pcal$ be a set of profiles of a graph~$G$.
	We call $\Ncal$ \emph{nice} (for $\Pcal$) if it is a nested set of \lc\ separations of 
order 
$k$ in $R(k,\Pcal)$ and we call $\Ncal$ \emph{distinguishing} if it distinguishes all 
$k$-distinguishable pairs of profiles in~$\Pcal$.
	An \emph{$\Ncal$-block} $X$ is a maximal subset of $V(G)$ such that for every 
$(A,B)\in\Ncal$ we have either $X\sub A$ or $X\sub B$ but not both.
	Note that $X$ is the intersection of all sides $A$ for $(A,B)\in\Ncal$ that contain~$X$.
	For an $\Ncal$-block~$X$, its \emph{torso} is the graph induced by~$X$ in~$G$ with 
additional edges $xy$ whenever $x$ and~$y$ lie in a separator $A\cap B$ of a separation 
$(A,B)\in\Ncal$ with $A\cap B\sub X$.
	
	We call $\Ncal$ \emph{extendable (for $\Pcal$)} if for any two (distinct)
	robust profiles in~$\Pcal$ of the same order, there is some separation of~$G$ 
distinguishing 
these two profiles efficiently that is nested with~$\Ncal$.
	
	\begin{thm}\label{thm_canon_td}
		Let $G$ be a well-separable graph with respect to a set $\Pcal$ of robust principal 
$(k+1)$-profiles of $G$ with $k=\kappa(\Pcal,G)$.
		Then $G$ has a \td\ $(T,\Vcal)$ satisfying the following.
		\begin{enumerate}
			\item $(T,\Vcal)$ distinguishes any two $k$-distinguishable robust profiles 
in $\Pcal$;
			\item $(T,\Vcal)$ is canonical with respect to~$\Pcal$;
			\item $(T,\Vcal)$ is $k$-balanced.
		\end{enumerate}
	\end{thm}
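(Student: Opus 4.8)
The plan is to construct, by transfinite recursion, an increasing chain $(\Ncal_\alpha)$ of nested, canonical sets of \lc\ separations of order $k$ lying in $\Rcal(k,\Pcal)$, and then to feed the stabilised union into Theorem~\ref{thm_nestedToTD}. We start with $\Ncal_0:=\es$; at limit ordinals we take unions, which preserves nestedness, \lc ness, order $k$, membership in $\Rcal(k,\Pcal)$, and canonicity, since any two members of the union already lie in a common $\Ncal_\alpha$. For the successor step, given $\Ncal_\alpha$ we form $(T_\alpha,\Vcal_\alpha):=(T(\Ncal_\alpha),\Vcal(\Ncal_\alpha))$, which is a \td\ by Lemma~\ref{lem_make-tree} and Theorem~\ref{thm_nestedToTD} and is $k$-balanced because every separation of~$\Ncal_\alpha$ has order~$k$. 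Its adhesion sets are separators of separations in $\Ncal_\alpha\subseteq\Rcal(k,\Pcal)$, so well-separability of~$G$ (that is, $\Scal(k,\Pcal,G)=\es$) gives $N(C)=S$ for every adhesion set~$S$ and every component~$C$ of $G-S$; thus Proposition~\ref{prop_ProfilesInTD} and Lemma~\ref{lem_torso} apply to $(T_\alpha,\Vcal_\alpha)$.

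For each part $V_t$ of $(T_\alpha,\Vcal_\alpha)$ in which at least two profiles of~$\Pcal$ live, let $X$ be its torso and $\Pcal_X$ the set of induced profiles. Lemma~\ref{lem_torso} says $X$ is well-separable with respect to~$\Pcal_X$ and that $\Rcal(k-1,\Pcal_X,X)=\es$; Proposition~\ref{prop_ProfilesInTD}\,(\ref{itm_ProfilesInTD1}) says $\Pcal_X$ consists of robust principal $(k+1)$-profiles; and since distinct profiles of~$\Pcal$ living in~$V_t$ are determined by, hence differ on, their restrictions to~$X$, there are two distinct profiles in~$\Pcal_X$, which (being $(k+1)$-profiles) are distinguished by an order-$\le k$ separation of~$X$, so $\kappa(\Pcal_X,X)=k$ and $\Rcal(k,\Pcal_X,X)\neq\es$. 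Hence Lemma~\ref{lem_separation_exists} applies to~$X$ and yields the nonempty, nested, canonical set $\Ocal(\Pcal_X,X)$. For each $(A,B)\in\Ocal(\Pcal_X,X)$ we transport it to~$G$ by the explicit torso-to-$G$ construction in the proof of Proposition~\ref{prop_ProfilesInTD} (add each component~$C$ of $G-V_t$ to the $A$-side if $N(C)\subseteq A$, otherwise to the $B$-side), obtaining a separation $(A^G,B^G)$ of order~$k$; using $N(C)=S$ for the relevant components one checks that $A^G\sm B^G$ is connected, so $(A^G,B^G)$ is \lc, and that it lies in $\Rcal(k,\Pcal)$ because it distinguishes the two profiles of~$\Pcal$ inducing the profiles of~$\Pcal_X$ distinguished by $(A,B)$ (Proposition~\ref{prop_ProfilesInTD}\,(\ref{itm_ProfilesInTD3})). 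We let $\Ncal_{\alpha+1}$ be $\Ncal_\alpha$ together with all separations obtained this way, over all such parts~$V_t$ and all $(A,B)\in\Ocal(\Pcal_X,X)$.

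It remains to verify that $\Ncal_{\alpha+1}$ is again nested, canonical and $\subseteq\Rcal(k,\Pcal)$. Canonicity is immediate because no choices are made: $(T_\alpha,\Vcal_\alpha)$, its parts and torsos, each $\Ocal(\Pcal_X,X)$ (canonical by Lemma~\ref{lem_separation_exists}) and the transport operation are all canonical with respect to~$\Pcal$. For nestedness one argues: each new separation crosses no separation induced by $(T_\alpha,\Vcal_\alpha)$, since it is confined to one part, hence it is nested with~$\Ncal_\alpha$; two new separations arising from the same torso~$X$ are pairwise nested because $\Ocal(\Pcal_X,X)$ is nested and the transport preserves nestedness; and two new separations from distinct parts $V_{t},V_{t'}$ are nested because the separation of~$\Ncal_\alpha$ induced by an edge of~$T_\alpha$ on the $t$--$t'$ path places one entirely on one side and the other on the opposite side, so each new separation is confined to a set on which the other is trivial. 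Finally $\bigcup_\alpha\Ncal_\alpha\subseteq\Rcal(k,\Pcal)$ is increasing inside a fixed set, so it is eventually constant; let $\Ncal$ denote this value.

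Running the successor step on $\Ncal$ returns $\Ncal$, and since a separation proper in a torso of $(T(\Ncal),\Vcal(\Ncal))$ cannot already belong to~$\Ncal$ (every separation of~$\Ncal$ is induced by the \td\ and hence restricts to an improper separation of each torso), this forces that no part of $(T(\Ncal),\Vcal(\Ncal))$ contains two profiles of~$\Pcal$. Set $(T,\Vcal):=(T(\Ncal),\Vcal(\Ncal))$: it is a \td\ by Lemma~\ref{lem_make-tree} and Theorem~\ref{thm_nestedToTD}, $k$-balanced since every separation in~$\Ncal$ has order~$k$, and canonical with respect to~$\Pcal$ as a union of canonical sets. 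For the distinguishing property, let $P,P'\in\Pcal$ be $k$-distinguishable; each decides every separation in~$\Ncal$ (order~$k$) consistently, hence points to a unique part of~$T$ (an end of~$T$ is excluded by property $(*)$), and if these parts coincided then $P$ and $P'$ would both live in that part, contradicting the previous sentence; therefore some separation of~$\Ncal$ distinguishes $P$ from~$P'$. The step I expect to be the main obstacle is the bookkeeping around the transport operation — checking that the explicit torso-to-$G$ construction yields an \lc\ separation in $\Rcal(k,\Pcal)$ that is nested with $\Ncal_\alpha$ and with the other transported separations, and that a fixed point of the recursion genuinely forces the \td\ to be distinguishing (equivalently, that distinct profiles living in a common part induce distinct torso profiles); well-separability is precisely what keeps transported orders from dropping below~$k$, and the canonicity of~$\Ocal$ from Lemma~\ref{lem_separation_exists} is what makes the recursion choice-free.
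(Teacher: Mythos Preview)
Your overall architecture matches the paper's proof exactly: transfinite recursion, at each step take the canonical set $\Ocal$ in every torso hosting two profiles, lift to~$G$, and pass the stabilised union through Lemma~\ref{lem_make-tree} and Theorem~\ref{thm_nestedToTD}. But two of the points you flag as ``bookkeeping'' are where essentially all the content lies, and your sketch is too optimistic about both.

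First, ``transport preserves nestedness'' is not automatic, and the paper in fact uses a \emph{different} lift from the one in the proof of Proposition~\ref{prop_ProfilesInTD}: a component $C$ of $G-V_t$ is placed into $A^G$ only if it has a neighbour in $A\sm B$, not merely if $N(C)\subseteq A$. That asymmetry is exactly what forces $A^G\sm B^G$ to be connected; with your lift, a component whose neighbourhood happens to equal $A\cap B$ (possible, since adhesion sets and new separators both have size~$k$) lands in $A^G\sm B^G$ but is cut off from $A\sm B$, destroying left-connectedness. Even with the corrected lift, nestedness of two lifted separations from the same torso requires a four-case analysis; the case $B_1\subseteq A_2$, $B_2\subseteq A_1$ is delicate and is where well-separability is actually used: one must argue that a component of $G-V_t$ all of whose neighbours lie in $A_2\cap B_2$ in fact has the whole of $A_2\cap B_2$ in its neighbourhood, so that it also picks up a neighbour in $A_1\sm B_1$. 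Nestedness with $\Ncal_\alpha$ and across different parts the paper handles by quoting \cite[Observation~4.22]{topo_ends} rather than the one-line arguments you give.

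Second, your assertion that distinct profiles living in $V_t$ ``are determined by, hence differ on, their restrictions to $X$'' is not obvious: two principal $(k+1)$-profiles might a priori differ only on separations whose separators lie outside~$V_t$. The paper does not argue injectivity directly; it invokes the extendability theorem \cite[Theorem~5.9]{topo_ends}, which guarantees an efficient distinguisher nested with $\Ncal_\alpha$. Such a separation then has its separator inside the block, and Proposition~\ref{prop_ProfilesInTD}\,(\ref{itm_ProfilesInTD2}) turns it into a torso separation witnessing $P_X\neq P'_X$ and hence $\Rcal(k,\Pcal_X,X)\neq\es$. Without this external input your successor step could stall at a part hosting two profiles with identical torso restrictions, and your fixed-point argument would not yield a distinguishing~$\Ncal$.
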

	
	\begin{proof}
		Our first aim is to construct a canonical set $\Ncal$ that is nice and 
distinguishing.\footnote{The existence of a non-canonical set $\Ncal$ follows from {\cite[Theorem 
5.9]{{topo_ends}}}. Here we show how the proof of that theorem can be modified to give a canonical 
set $\Ncal$.}		
		We construct the set $\Ncal$ by transfinite recursion.
		We set $\Ncal_0:=\emptyset$. 
		Assume we already constructed all sets $\Ncal_\alpha$ for $\alpha<\beta$ such that 
they are nice and canonical.
		If $\beta$ is a limit ordinal we set $\Ncal_\beta:=\bigcup_{\alpha<\beta} 
\Ncal_\alpha$.
		This set is nice and canonical as so are all sets $\Ncal_\alpha$.
		
		Now assume that $\beta=\gamma+1$ is a successor ordinal.
		If $\Ncal_\gamma$ is distinguishing, we stop and set $\Ncal:=\Ncal_\gamma$. 
Otherwise there are robust profiles $P,Q\in \Pcal$ that are distinguished by a separation of order 
$k$ in~$G$ but not by $\Ncal_\gamma$.
		Hence the profiles $P$ and $Q$ have 
		to live in the same $\Ncal_\gamma$-block~$X$.
		Let $X'$ be the torso of~$X$.
		By {\cite[Theorem 5.9]{{topo_ends}}} the set $\Ncal_\gamma$ is extendable 
for~$\Pcal$.
		Let $\Pcal'$ be the set of profiles induced by $\Pcal$ on~$X'$.
		By Proposition~\ref{prop_ProfilesInTD}\,(\ref{itm_ProfilesInTD1}), the robust 
profiles $P$ and $Q$ induce robust profiles in the set 
		$\Pcal'$; in particular, $\Pcal'$ is not empty.
		By Proposition~\ref{prop_ProfilesInTD}\,(\ref{itm_ProfilesInTD3}), the set 
$R(k,\Pcal',X')$ of relevant separations is not empty.
		By Lemma~\ref{lem_torso} $S(k,\Pcal')=\emptyset$ and $R(k-1,\Pcal')=\emptyset$.
		So by Lemma \ref{lem_separation_exists} the set $\Mcal(X)$ of \lc\ separations in 
$\Rcal(k,\Pcal',X')$ with minimum crossing number in $\Rcal_{\rm lc}(k,\Pcal',X')$ is nested and 
not 
empty.

		Similarly, as in the proof of Proposition~\ref{prop_ProfilesInTD}, we extend the 
set 
$\Mcal(X)$ of separations of the torso $X'$ to a set of separations in the graph $G$:
		given $(A,B)\in \Mcal(X)$, we obtain $A^G$ from~$A$ by adding all components of $G- 
X$ that have a neighbour in $A\sm B$ and we obtain $B^G$ from $B$ by adding all other components.
		By construction $(A^G,B^G)$ is a \lc\ separation of~$G$ of order~$k$.
		Let $\Mcal^G(X)$ be the set of all separations $(A^G,B^G)$. 
		We set $\Ncal_\beta:=\Ncal_\gamma\cup \bigcup_Y\Mcal^G(Y)$, where the union ranges 
over all $\Ncal_\gamma$-blocks $Y$ such that there are at least two profiles of~$\Pcal$ living 
in~$Y$.
		This definition ensures that $\Ncal_\beta$ is canonical.
		Let us prove			
		\begin{equation}\tag{$\ddagger$}
		\Ncal_\beta \text{ is nice.}
		\end{equation}
		
		The separation $(A^G,B^G)$ is in $\Rcal(k,\Pcal,G)$ as it distinguishes the two 
robust profiles whose induced profiles in~$\Pcal'$ are distinguished by $(A,B)$ in~$X'$.
		It remains to prove that $\Ncal_\beta$ is nested.
		
		Any separation in $\Mcal^G(X)$ is nested with any separation in $\Ncal_\gamma$ by 
{\cite[Observation 4.22]{{topo_ends}}}.
		The same result {\cite[Observation 4.22]{{topo_ends}}} also implies that every 
separation in $\Mcal^G(Y_1)$ is nested with any separation in $\Mcal^G(Y_2)$ if $Y_1\neq Y_2$.

		Thus it suffices to show that any two separations $(A_1^G,B_1^G)$ and 
$(A_2^G,B_2^G)$ in $\Mcal^G(X)$ are nested.
		The separations $(A_1,B_1)$ and $(A_2,B_2)$ are nested as they lie in $\Mcal(X)$.
		
		Let us consider the case $A_1\sub A_2$ and $B_2\sub B_1$ first.
		Let $X$ be a component with a neighbour $x\in A_1\sm B_1$.
		If $x\notin A_2\sm B_2$, then we have $x\in B_2\sub B_1$, which is a contradiction.
		Thus, every component of $G-X$ that has a neighbour in $A_1\sm B_1$ also has a 
neighbour in $A_2\sm B_2$.
		So we have $A_1^G\sub A_2^G$ and $B_2^G\sub B_1^G$.

		The case $A_2\sub A_1$ and $B_1\sub B_2$ is analogous to the previous case.
		
		Let us now consider the case $A_1\sub B_2$ and $A_2\sub B_1$.
		Then for every component $K$ of $G-X$ that has a neighbour in~$A_1\sm B_1$ we have 
$N(K)\sub A_1\sub B_2$ as $(A_1,B_1)$ is a separation of the torso~$X'$; in particular $N(K)\cap 
(A_2\sm B_2)=\es$.
		So $K$ does not lie in $A_2^G$ but in $B_2^G$.
		This shows $A_1^G\sub B_2^G$.
		An analogous argument shows $A_2^G\sub B_1^G$.
		
		The last case to consider is $B_1\sub A_2$ and $B_2\sub A_1$.
		If $(A_1,B_1)=(B_2,A_2)$, then we are in the above case $A_1\sub B_2$ and $A_2\sub 
B_1$. So we may assume that those two separations are distinct; in particular $B_1\subsetneq A_2$ 
and $B_2\subsetneq A_1$.
		First we show that there is a vertex $v$ in $(A_2\cap B_2)\sm B_1$, see 
Figure~\ref{fig:corner1}.
		As the separation ${(A_2,B_2)}$ is proper, there is a vertex $w$ in $B_2\sm A_2$.
		This vertex $w$ must be contained in $A_1\sm B_1$ as $B_1\subseteq A_2$.
		As the connected set $A_1\sm B_1$ is not a subset of $B_2$, there must be a vertex 
$v$ in the link $(A_2\cap B_2)\sm B_1$. 
		
\begin{figure}[htpb]  
\begin{center}
\begin{tikzpicture}
\draw (0,0) ellipse (0.3cm and 2cm);
\draw (0,0) ellipse (2cm and 0.3cm);
\draw (0,0) circle (2cm);
\draw (0,2.4) node [inner sep=2pt,label={right:$B_1$}] {};
\draw (0,2.4) node [inner sep=2pt,label={left:$A_1$}] {};
\draw (-2.5,0) node [inner sep=2pt,label={above:$A_2$}] {};
\draw (-2.5,0) node [inner sep=2pt,label={below:$B_2$}] {};
\draw (-1,0) node [circle,fill, inner sep=1pt,label={left:$v$}] {};
\draw (-1,-1) node [circle,fill, inner sep=1pt,label={left:$w$}] {};
\draw (0,-1.5) node [inner sep=1pt,label={$\emptyset$}] {};
\draw (1.3,-0.3) node [inner sep=1pt,label={$\emptyset$}] {};
\draw (0,-0.3) node [inner sep=1pt,label={$\emptyset$}] {};
\end{tikzpicture}\caption{The corner diagram of ${(A_1,B_1)}$ and ${(A_2,B_2)}$. By 
nestedness the bottom right corner and its two adjacent links are empty. The vertex $w$ exists as 
$(A_2,B_2)$ is proper. The vertex $v$ exists as the connected set $A_1\sm B_1$ is not a subset of 
$B_2$. 
}\label{fig:corner1}
\end{center}
\end{figure}
		
		Our aim is to show $B_2^G\subseteq A_1^G$.
		So let $K$ be a component of $G- X$ that is a subset of $B_2^G$.
		If $K$ has a neighbour in $B_2\sm A_2$, then it also has a neighbour in the 
superset 
$A_1\sm B_1$, and thus $X$ is included in $A_1^G$.
		Thus we may assume that $K$ has no neighbour in $B_2\sm A_2$.
		By its choice $K$ has no neighbour in $A_2\sm B_2$.
		So all its neighbours are in the separator $A_2\cap B_2$.
		As $(A_2,B_2)$ is in $\Rcal(k,\Pcal)$ and $\Scal(k,\Pcal)$ is empty, the component 
$K$ has the whole separator $A_2\cap B_2$ in its neighbourhood. 
		Thus the vertex $v$ lies in the neighbourhood of $K$ but also in $A_1\sm B_1$. 
		So $K$ is included in $A_1^G$.
		We have shown $B_2^G\sub A_1^G$.
		An analogous argument shows $B_1^G\sub A_2^G$.
		Thus $(A_1^G,B_1^G)$ and $(A_2^G,B_2^G)$ are nested.
		
		All cases combined show that $\Ncal_\beta$ is nested, which proves~($\ddagger$).
		
		Every separation $(A^G,B^G)$ in $\Mcal^G(X)$ distinguishes two robust profiles 
in~$\Pcal$ that are not distinguished by~$\Ncal_\gamma$.
		Indeed, there are profiles in~$\Pcal$ inducing profiles in the torso~$X'$ such that 
$(A,B)$ distinguishes these induced profiles. 
		So $\Mcal^G(X)$ contains a separation distinguishing two profiles not distinguished 
by~$\Ncal_\gamma$.
		So $\Ncal_\beta$ is strictly larger than $\Ncal_\gamma$.		
		As the sequence of the $\Ncal_\alpha$ is strictly increasing but the number of 
separations of order $k$ is bounded, this recursions has to stop eventually.
		So there is some canonical, nice and distinguishing set~$\Ncal$. 
		
		By Lemma~\ref{lem_make-tree} the set $\Ncal$ has property 
(\ref{property_Finiteness}) and thus induces by Theorem~\ref{thm_nestedToTD} a canonical \td\ that 
distinguishes all robust profiles in~$\Pcal$.
		It is $k$-balanced by construction. 
	\end{proof}	
	
	\section{Trees of \td s}\label{sec_TTD}
	
	In this section, we will prove the main theorem of this paper.
	Before we do that, we give the formal definition of trees of \td s.
	
	A \emph{rooted} tree is a pair $(T,r)$ of a tree $T$ and a vertex $r\in V(T)$, called the 
	\emph{root} of~$T$.
	The \emph{level} of a vertex $t\in V(T)$ is $d(t,r)+1$.
	A \emph{\ttd} of a graph $G$ is a triple $((T,r),(G_t)_{t\in V(T)},(T_t,\Vcal_t)_{t\in 
V(T)})$ of a 
	rooted tree $(T,r)$, a family $(G_t)_{t\in V(T)}$ of graphs, one for every node of~$T$ with 
$G_r=G$, 
	and a family $(T_t,\Vcal_t)_{t\in V(T)}$ such that for every node $t\in V(T)$ the pair 
	$(T_t,\Vcal_t)$ is a \td\ of~$G_t$ such that for every node $t\in V(T)$ the graphs assigned 
to its 
	neighbours on the next level are distinct torsos of $(T_t,\Vcal_t)$.
	The \ttd\ \emph{distinguishes} two profiles (\emph{efficiently}) if there exists a node 
$t\in V(T)$ such that some separation of~$G_t$ induced by $(T_t,\Vcal_t)$ distinguishes the induced 
profiles (efficiently) and is induced by a separation of~$G$ that distinguishes the profiles 
(efficiently).	

\begin{remark}\emph{
 It was proved first in \cite{CG17} that any tree of \td s 
of a finite graph can be stuck together into a single \td\ (yet with somewhat 
different notation). The proof is essentially the same as that in Proposition \ref{prop_TDrefinement}.}
\end{remark}

	\begin{thm}\label{thm_ttd}
		Let $G$ be a graph and $\Pcal$ a set of distinguishable robust principal profiles each of 
which is an 
		$\ell$-profile for some $\ell\in\N\cup\{\aleph_0\}$.
		Then there exists a \ttd\ $((T,r),(G_t)_{t\in V(T)},(T_t,\Vcal_t)_{t\in V(T)})$ 
that 
is canonical with respect to~$\Pcal$ such that 
		the following hold.
		\begin{enumerate}[{\rm (1)}]
			\item\label{itm_ttd_distinguish} The \ttd\ distinguishes $\Pcal$ 
efficiently;
			\item\label{itm_ttd_evenbalanced} if $t\in V(T)$ has level $2k$, then 
$(T_t,\Vcal_t)$ is 
			$k$-balanced;
			\item\label{itm_ttd_evenneighbours} nodes $t$ on level $2k$ have $|V(T_t)|$ 
neighbours on level $2k+1$ 
			and the graphs assigned to them are all torsos of $(T_t,\Vcal_t)$;
			\item\label{itm_ttd_oddadhesion} if $t\in V(T)$ has level $2k+1$, its 
adhesion is at most~$k$;
			\item\label{itm_ttd_oddneighbours} nodes on level $2k+1$ have at most one 
neighbour on level $2k+2$.
		\end{enumerate}
	\end{thm}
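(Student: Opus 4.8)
The plan is to build the \ttd\ by transfinite recursion on the levels, alternating between two types of steps that correspond to the two propositions already set up. At the root $r$ we put $G_r=G$. Given a node $t$ on an odd level $2k-1$ (so $k\ge 1$) carrying a graph $G_t$ together with the set $\Pcal_t$ of profiles induced on $G_t$, we want to pass to level $2k$ by applying Proposition~\ref{prop_makeNice} to the pair $(G_t,\Pcal_t^{(k)})$, where $\Pcal_t^{(k)}$ is the subset of profiles of order more than $k$ (so that $k=\kappa$ can be arranged, or is already forced by the construction at the previous stage). This gives a canonical \td\ of adhesion less than $k$ with a unique part whose torso $H$ is well-separable with respect to the induced profiles, and all separations induced by the decomposition tree are proper; we take $(T_t,\Vcal_t)$ to be this \td, we attach the torsos of $(T_t,\Vcal_t)$ as the graphs on the neighbours on level $2k$, giving item~(\ref{itm_ttd_oddadhesion}) and item~(\ref{itm_ttd_oddneighbours}) (only the distinguished part carries at least two profiles and hence only it gets refined further; the other torsos are leaves of $T$). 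Then from each node $s$ on an even level $2k$ we pass to level $2k+1$ by applying Theorem~\ref{thm_canon_td} to $(G_s,\Pcal_s)$, where now $G_s$ is a well-separable graph with respect to a set of robust principal $(k+1)$-profiles with $\kappa=k$ (this is exactly the hypothesis Proposition~\ref{prop_makeNice} was arranged to guarantee, together with Lemma~\ref{lem_torso} keeping well-separability under passing to torsos); this produces a canonical $k$-balanced \td\ distinguishing all $k$-distinguishable robust profiles, giving items~(\ref{itm_ttd_evenbalanced}) and~(\ref{itm_ttd_evenneighbours}). Its torsos, with the profiles they induce (Proposition~\ref{prop_ProfilesInTD}\,(\ref{itm_ProfilesInTD1})), become the graphs on level $2k+1$, and we recurse.

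First I would make the bookkeeping precise: at each stage record the quadruple $(G_t,\Pcal_t,k_t,\text{well-separability})$, and check the inductive invariants are preserved. The key points are (a) passing from $G_t$ to a torso, the induced set $\Pcal_H$ consists of robust principal $\ell$-profiles with $\ell$ still exceeding the adhesion, by Proposition~\ref{prop_ProfilesInTD}\,(\ref{itm_ProfilesInTD1}); (b) $\kappa$ is preserved on the distinguished torso, by Proposition~\ref{prop_ProfilesInTD}\,(\ref{itm_ProfilesInTD4}); (c) after the Proposition~\ref{prop_makeNice} step the torso is well-separable with respect to its induced profiles, so Theorem~\ref{thm_canon_td} applies at the next even level; and (d) after the Theorem~\ref{thm_canon_td} step, each torso is again well-separable with respect to its induced profiles and the relevant value of $\kappa$ has strictly increased (by Lemma~\ref{lem_torso}, using that the produced \td\ is $k$-balanced). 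Point (d) is what makes the recursion terminate along each branch in the locally finite / bounded case and, in general, guarantees that for each fixed $\ell$ the branch stabilises after finitely many doubling steps, so the resulting rooted tree $(T,r)$ is well-defined; nodes where $\Pcal_t$ has become a singleton or empty are leaves.

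For item~(\ref{itm_ttd_distinguish}): given two profiles $P,P'\in\Pcal$ distinguished efficiently in $G$ by some separation of order $m$, I would track them down the branch of $(T,r)$ on which they both survive; they must eventually live in a common part torso $G_t$ on which $\kappa$ equals $m$ (since $\kappa$ strictly increases along even levels and is bounded by $m$), and then the Theorem~\ref{thm_canon_td} step at that node produces a $k$-balanced \td\ with $k=m$ distinguishing every $m$-distinguishable pair of robust profiles, in particular the induced images of $P,P'$; Proposition~\ref{prop_ProfilesInTD}\,(\ref{itm_ProfilesInTD3}) then lifts this to a separation of $G$ distinguishing $P,P'$ efficiently, which is exactly what the definition of \lq distinguishes efficiently\rq\ for a \ttd\ demands. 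Canonicity of the whole \ttd\ follows because at every stage the construction we invoked — Proposition~\ref{prop_makeNice}, Theorem~\ref{thm_canon_td}, and the passage to torsos with induced profiles — is canonical with respect to the relevant profile set, and the recursion makes no further choices, so the construction commutes with every isomorphism of $(G,\Pcal)$.

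The main obstacle I expect is not any single step but marshalling the two alternating constructions so their hypotheses interlock exactly: one has to verify that the output of Proposition~\ref{prop_makeNice} (adhesion less than $\kappa$, a unique distinguished part, torso well-separable, proper induced separations) is precisely the input Theorem~\ref{thm_canon_td} needs (well-separable, robust principal $(k+1)$-profiles with $k=\kappa$), and conversely that the output of Theorem~\ref{thm_canon_td} ($k$-balanced, distinguishing all $k$-distinguishable robust profiles) feeds — after passing to torsos via Proposition~\ref{prop_ProfilesInTD} and Lemma~\ref{lem_torso} — back into Proposition~\ref{prop_makeNice} at the next stage with $\kappa$ strictly larger. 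The subtle part is arguing that this strict increase of $\kappa$ after each even level makes the branch of $(T,r)$ through any prescribed pair of profiles finite (using that profiles are $\ell$-profiles for $\ell\in\N\cup\{\aleph_0\}$ and the distinguishing separation has some finite order), so that item~(\ref{itm_ttd_distinguish}) is not vacuous; handling the $\ell=\aleph_0$ case here, where $\kappa$ must be shown to exceed any given finite bound only after passing through the right torso, is where I would be most careful.
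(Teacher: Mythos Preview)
Your approach is essentially the paper's: alternate between Proposition~\ref{prop_makeNice} at odd levels and Theorem~\ref{thm_canon_td} at even levels, using Proposition~\ref{prop_ProfilesInTD} to track the induced profiles and to verify that $\kappa$ strictly increases so that every pair of profiles is eventually separated at some finite level. Two small corrections: to secure item~(\ref{itm_ttd_oddneighbours}) you must attach \emph{only} the distinguished torso of the Proposition~\ref{prop_makeNice} decomposition as the neighbour on the next level (attaching all torsos would give several neighbours, contradicting~(\ref{itm_ttd_oddneighbours})); and rather than restricting to a subset $\Pcal_t^{(k)}$ to force $k=\kappa$, the paper simply uses the trivial one-node tree-decomposition at any level where the prescribed $k$ does not yet match $\kappa(\Pcal_t,G_t)$, passing $G_t$ unchanged to the next level until the two agree.
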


	Note that Theorem~\ref{thm_ttd}\,(\ref{itm_ttd_distinguish}) implies Theorem 
\ref{intro_simple2} and hence Theorem \ref{intro_simple_version}.
	
	\begin{proof}[Proof of Theorem~\ref{thm_ttd}]
	We construct the tree of tree-decompositions recursively. More precisely, we have one step for every node of the rooted tree $(T,r)$, which is constructed recursively during the process. 
	Starting with the root $r$, for each node of $T$ in its step we define a tree-decomposition of its associated graph and define its neighbours at the next level of $T$ and their associated graphs.

	We start by assigning the graph $G$ to the root; that is, we set $G_r:=G$. Assume that a node $t$ of $T$ is defined and we already assigned a graph $G_t$ to this node.
	First we consider the case that the node $t$ is on an odd level. We denote its level by $2k+1$. Let $\Pcal_t$ be the set of profiles induced 
by~$\Pcal$ on~$G_t$.
		If $\Pcal_t$ is empty, let $(T_t,\Vcal_t)$ be the trivial \td\ with a unique node 
and let $t$ have 
		no neighbour on the next level.
		If $\Pcal_t$ is not empty but $k+1\neq \kappa(\Pcal_t,G_t)$ (we note that this always includes the case that the set $\Pcal_t$ consists of a single profile), let $(T_t,\Vcal_t)$ be 
the trivial \td\ 
		of~$G_t$ and let $t$ have a unique neighbour on the next level whose associated 
graph is~$G_t$.
		If $\Pcal_t$ is not empty and $k+1=\kappa(\Pcal_t,G_t)$, let $(T_t,\Vcal_t)$ be the 
canonical \td\ 
		of Proposition~\ref{prop_makeNice}.
		It has adhesion at most $k$ by that proposition.
		Only the unique node of $(T_t,\Vcal_t)$ whose torso contains induced profiles 
of~$\Pcal_t$ has a 
		neighbour on the next level whose associated graph is that torso.
		Note that this torso is well-separable by Proposition~\ref{prop_makeNice}.
		By construction, (\ref{itm_ttd_oddadhesion}) and (\ref{itm_ttd_oddneighbours}) hold.
		
		If $t\in V(T)$ has level $2k$, let $\Pcal_t$ be the set of profiles induced 
by~$\Pcal$ on~$G_t$.
		If $k\neq\kappa(\Pcal,G)$, let $(T^t,\Vcal^t)$ be the trivial \td\ of~$G^t$, 
i.\,e.\ 
let $T^t$ be a 
		tree with one vertex and $\Vcal^t=\{V(G_t)\}$.
		If $k=\kappa(\Pcal,G)$, then $G_t$ is well-separable by construction.
		Let $(T_t,\Vcal_t)$ be the canonical \td\ from 
		Theorem~\ref{thm_canon_td} for~$G_t$ and~$\Pcal_t$.
		Then $t$ gets $|V(T_t)|$ neighbours on the next level whose associated graphs are 
the torsos of 
		$(T_t,\Vcal_t)$.
		Then (\ref{itm_ttd_evenbalanced}) and (\ref{itm_ttd_evenneighbours}) hold for~$t$ 
by 
construction.
This completes the construction of the \ttd\ $((T,r),(G_t)_{t\in 
V(T)},(T_t,\Vcal_t)_{t\in V(T)})$.
By Proposition~\ref{prop_ProfilesInTD}\,(\ref{itm_ProfilesInTD4}), it distinguishes $\Pcal$ efficiently.
	\end{proof}

	As we already mentioned in the introduction, profiles are a generalisation of ends, 
$k$-blocks and 
 tangles in finite graphs.
	As a corollary, it will follow that Theorem~\ref{thm_ttd} holds for the set of profiles 
induced by 
	ends, the set of distinguishable profiles induced by robust $k$-blocks and the set of 
profiles induced 
	by principal tangles of order~$k$.
	Here we will give brief definitions of all these concepts and discuss how they induce 
profiles.
	
	Let $G$ be a graph.
	A \emph{ray} is a one-way infinite path and two rays in~$G$ are \emph{equivalent} if for 
every 
	finite $S\sub V(G)$ there is a component of~$G-S$ such that both rays have all but finitely 
many 
	vertices in that component.
	This is an equivalence relation whose equivalence classes are the \emph{ends} of~$G$.
	Let $\omega$ be an end of~$G$.
	We say that $\omega$ \emph{lies} in a component of $G-S$ if for every ray $R\in\omega$ all 
but 
	finitely many vertices of~$\omega$ lie in $G-S$.
	Let $P_\omega$ be the set of separations $(A,B)$ of finite order such that the component of 
	$G-(A\cap B)$ that contains~$\omega$ lies in~$B$.
	Then (P1) and (P2) are true by definition.
	Obviously, $P_\omega$ is robust and principal.
	Thus, sets of ends define sets of robust profiles.
	
	\begin{coro}\label{cor_ttd1}
		Let $G$ be a graph and let $\Omega$ be a set of ends of~$G$.
		Let $\Pcal$ be the set of profiles defined by~$\Omega$.
		Then there exists a \ttd\ distinguishing all ends in $\Omega$.\qed
	\end{coro}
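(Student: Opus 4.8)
The plan is to deduce Corollary~\ref{cor_ttd1} as a direct special case of Theorem~\ref{thm_ttd}. The only real content is to check that the set $\Pcal$ of profiles defined by the ends in $\Omega$ satisfies all the hypotheses of Theorem~\ref{thm_ttd}, namely that its members are pairwise distinguishable robust principal profiles and that each of them is an $\ell$-profile for some $\ell\in\N\cup\{\aleph_0\}$. The text immediately preceding the corollary already records that each $P_\omega$ is a profile (conditions (P1) and (P2) hold by definition), that it is robust, and that it is principal, so those properties come for free; what remains to be pinned down is the order of $P_\omega$ and the distinguishability of the members of $\Pcal$.

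First I would address the order. Given an end $\omega$ of $G$, the profile $P_\omega$ consists of exactly those finite-order separations $(A,B)$ for which the component of $G-(A\cap B)$ containing $\omega$ lies in $B$. For any finite separator $S=A\cap B$, the end $\omega$ lives in a unique component $C$ of $G-S$, so exactly one of $(A,B)$, $(B,A)$ lies in $P_\omega$ whenever $\{A,B\}$ is the corresponding pair; hence $P_\omega$ decides every separation of finite order. Thus $P_\omega$ is an $\aleph_0$-profile: all its separations have finite order (that is, order ${<\aleph_0}$), and every separation of $G$ of order ${<\aleph_0}$ is decided by it. So every member of $\Pcal$ is an $\ell$-profile with $\ell=\aleph_0$, which is allowed in Theorem~\ref{thm_ttd}.

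Next I would check distinguishability: if $\omega\neq\omega'$ are distinct ends, then by the definition of the end relation there is a finite set $S\subseteq V(G)$ such that no component of $G-S$ contains all but finitely many vertices of rays from both $\omega$ and $\omega'$; that is, $\omega$ and $\omega'$ live in different components of $G-S$. Taking $A\cap B=S$, with the component of $\omega'$ on the $A$-side and the component of $\omega$ on the $B$-side, gives a separation of finite order with $(A,B)\in P_\omega$ and $(B,A)\in P_{\omega'}$, so $P_\omega$ and $P_{\omega'}$ are distinguishable. Therefore $\Pcal$ is a set of distinguishable robust principal profiles each of which is an $\aleph_0$-profile, and Theorem~\ref{thm_ttd} applies to yield a \ttd\ that is canonical with respect to $\Pcal$ and distinguishes $\Pcal$ efficiently. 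By the definition of when a \ttd\ distinguishes profiles, together with the correspondence $\omega\mapsto P_\omega$, this is precisely a \ttd\ distinguishing all ends in $\Omega$, which proves the corollary.

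I do not expect any genuine obstacle here: the work is entirely bookkeeping, matching the ad hoc verifications already in the surrounding text to the precise hypothesis list of Theorem~\ref{thm_ttd}. If anything needs care, it is only making sure that "distinguishes all ends in $\Omega$" is read through the map $\omega\mapsto P_\omega$ and the definition of a \ttd\ distinguishing a set of profiles, rather than as a separate claim requiring new argument.
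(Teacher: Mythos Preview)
Your proposal is correct and matches the paper's approach: the paper treats the corollary as immediate (hence the \qed\ in the statement), relying on the verifications in the preceding paragraph that each $P_\omega$ is a robust principal profile, together with Theorem~\ref{thm_ttd}. You have simply spelled out the remaining routine checks (that $P_\omega$ is an $\aleph_0$-profile and that distinct ends yield distinguishable profiles), which is exactly what the paper leaves implicit.
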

	
	A \emph{$k$-block} of a graph~$G$ is a maximal set $b$ of at least $k$ vertices such that 
no 
two of 
	its vertices can be separated in~$G$ by fewer than $k$ vertices.
	Then for every separation $(A,B)$ of order at most $k-1$ we have either $b\sub A$ or $b\sub 
B$.
	Let $P_b$ be the set of separations $(A,B)$ of order at most $k-1$ with $b\sub B$.
	It is easy to see that $P_b$ is a principal profile.
	We call $b$ \emph{robust} if $P_b$ is robust and two $k$-blocks are \emph{distinguishable} 
if their 
	profiles are distinguishable.
	
	\begin{coro}\label{cor_ttd2}
		Let $G$ be a graph and let $\Bcal$ be a set of distinguishable robust $k$-blocks.
		Let $\Pcal$ be the set of profiles defined by~$\Bcal$.
		Then there exists a \ttd\ distinguishing all robust profiles in $\Bcal$.\qed
	\end{coro}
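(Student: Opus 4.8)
The plan is to deduce this directly from Theorem~\ref{thm_ttd}, by checking that the set $\Pcal$ of profiles associated with the blocks in $\Bcal$ meets its hypotheses. Recall from the discussion preceding the statement that for a $k$-block $b$ the set $P_b$ of all separations $(A,B)$ of order at most $k-1$ with $b\sub B$ is a principal profile, and that $b$ is by definition \emph{robust} precisely when $P_b$ is robust. Hence every profile in $\Pcal$ is robust and principal.

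The one point worth spelling out is that each $P_b$ is an $\ell$-profile with $\ell=k\in\N$. Every separation in $P_b$ has order at most $k-1<k$ by definition, and for any separation $(A,B)$ of $G$ of order less than $k$ the defining property of a $k$-block gives $b\sub A$ or $b\sub B$, so that $(B,A)\in P_b$ or $(A,B)\in P_b$. Thus $P_b$ is a $k$-profile. This is really the only verification needed, and it is immediate from the maximality/connectivity property defining $k$-blocks; there is no genuine obstacle in the argument.

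Finally, two $k$-blocks of~$\Bcal$ being distinguishable means, by definition, that the profiles $P_b$ are pairwise distinguishable, so $\Pcal$ is a set of distinguishable robust principal profiles each of which is an $\ell$-profile for some $\ell\in\N$. Theorem~\ref{thm_ttd} then provides a \ttd\ of~$G$, canonical with respect to~$\Pcal$, that distinguishes $\Pcal$ efficiently. Since $\Pcal$ is exactly the set of profiles defined by~$\Bcal$, this \ttd\ distinguishes all robust profiles in~$\Bcal$, as required.
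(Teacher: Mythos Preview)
Your proposal is correct and follows exactly the approach intended by the paper, which gives no explicit proof (the statement ends with \qed) and treats the result as immediate from Theorem~\ref{thm_ttd} together with the preceding discussion that each $P_b$ is a principal profile, robust by definition of a robust $k$-block. Your added verification that $P_b$ is a $k$-profile is the only nontrivial detail, and it is handled correctly.
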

	
A \emph{principal tangle of order $k$} in a graph $G$ is a set $\theta$ of separations of order 
at most $k-1$ if it satisfies the following conditions.
\begin{enumerate}[($\theta$1)]
\item For all $(A_1,B_1),(A_2,B_2),(A_3,B_3)\in\theta$, we have 
\[
G\neq G[A_1]\cup G[A_2]\cup G[A_3],
\]
where $G[A_i]$ is the graph induced by the vertex set~$A_i$ for $i\in\{1,2,3\}$;
\item if $X$ is a set of at most $k$ vertices, there is a component $C$ of $G\sm X$ such that 
$(G\sm 
C,C\cup X)\in \theta$;
\item for all separations $(A,B)$ of order at most $k-1$ we have either $(A,B)\in\theta$ or 
$(B,A)\in\theta$.
\end{enumerate}
Two tangles $\theta_1,\theta_2$ are \emph{distinguishable} if there is a separation 
$(A,B)\in\theta_1$ with $(B,A)\in\theta_2$.

\begin{prop}
Let $G$ be a graph.
Every principal tangle of order~$k$ is a robust principal $k$-profile.
\end{prop}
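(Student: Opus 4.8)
The plan is to verify the three profile axioms (P1), (P2), robustness, and principality directly from the tangle axioms $(\theta 1)$--$(\theta 3)$, together with the fact that every separation in $\theta$ has order at most $k-1$ (so that $\theta$ automatically only involves separations of order $<k$). First I would record the trivial observation that $(\theta 3)$ says exactly that $\theta$ orients every separation of order at most $k-1$, so that once we know $\theta$ is a profile it is a $k$-profile. The genuine content is in deducing consistency and (P2) from $(\theta 1)$, and principality from $(\theta 2)$.

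For consistency (P1): suppose $(C,D)\leq (A,B)$ with $(A,B)\in\theta$ but $(D,C)\in\theta$. Then $A\cup D = V(G)$ since $A\supseteq C$ and $D\supseteq B$, hence $G[A]\cup G[D]=G$. Applying $(\theta 1)$ to the triple $(A,B),(D,C),(D,C)$ (repetition is allowed, as $(\theta 1)$ quantifies over all, not necessarily distinct, triples) gives $G\neq G[A]\cup G[D]\cup G[D]=G[A]\cup G[D]$, a contradiction. For (P2): suppose $(A_1,B_1),(A_2,B_2)\in\theta$ but $(B_1\cap B_2, A_1\cup A_2)\in\theta$. Writing $(A_3,B_3):=(B_1\cap B_2,A_1\cup A_2)$, one checks $A_1\cup A_2\cup A_3 = A_1\cup A_2\cup(A_1\cup A_2)=A_1\cup A_2$; but actually we need all of $V(G)$: since $(A_3,B_3)$ is a separation, $A_3\cup B_3=V(G)$, i.e. $(B_1\cap B_2)\cup(A_1\cup A_2)=V(G)$. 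Hence every vertex lies in $A_1$, in $A_2$, or in both $B_1$ and $B_2$; in the last case, since $(A_1,B_1)$ and $(A_2,B_2)$ are separations, such a vertex together with its incident edges is covered by $G[B_1]\cap G[B_2]\subseteq G[A_3]$. A short case analysis on edges then shows $G=G[A_1]\cup G[A_2]\cup G[A_3]$, contradicting $(\theta 1)$. This is the step I expect to require the most care: one must treat both endpoints of an arbitrary edge and check they land in a common $A_i$, using that $B_1\cap B_2\subseteq A_3$ and that there are no edges between $A_i\setminus B_i$ and $B_i\setminus A_i$.

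For principality: let $((A_i,B_i))_{i\in I}$ be a family in $\theta$ with common separator $S:=A_i\cap B_i$, and note $|S|\leq k-1\leq k$. By $(\theta 2)$ applied to $X:=S$, there is a component $C$ of $G\setminus S$ with $(G\setminus C, C\cup S)\in\theta$, i.e. $(V(G)\setminus C,\,C\cup S)\in\theta$. I claim $C\subseteq B_i\setminus A_i$ for every $i$, which gives $\bigcap_i(B_i\setminus A_i)\supseteq C\neq\emptyset$. Indeed, fix $i$. Since $C$ is connected and disjoint from $S=A_i\cap B_i$, it lies entirely in $A_i\setminus B_i$ or entirely in $B_i\setminus A_i$. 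If $C\subseteq A_i\setminus B_i$, then $(C\cup S,\,V(G)\setminus C)\leq(A_i,B_i)$, so by consistency (P1), which we have already established, $(V(G)\setminus C,\,C\cup S)\notin\theta$, contradicting the choice of $C$. Hence $C\subseteq B_i\setminus A_i$ for all $i$, proving principality.

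Finally, robustness: fix $n\in\N$, take $(A,B)\in\theta$ and a separation $(C,D)$ of order at most $n$, and suppose both corner separations $(B\cap C, A\cup D)$ and $(B\cap D, A\cup C)$ have order less than $|A\cap B|$ and lie in $\theta$. Since $(B\cap C)\cup(B\cap D)\cup(A\cup D)\cup(A\cup C)\supseteq (B\cap C)\cup(A\cup D)=V(G)$ (again because a separation covers $V(G)$), one shows $G=G[B\cap C]\cup G[A\cup D]$ already, and then applying $(\theta 1)$ to the triple consisting of $(A,B)$ (or the two corner separations) and using that edges between $A\setminus B$ and $B\setminus A$ do not exist yields $G=G[A_1']\cup G[A_2']\cup G[A_3']$ for the relevant triple, contradicting $(\theta 1)$; in fact it is cleanest to observe that $(B\cap C,A\cup D)$ and $(B\cap D,A\cup C)$ together with $(A,B)$ already violate $(\theta 1)$ since $A\cup(B\cap C)\cup(B\cap D)=V(G)$. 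Thus one of the two corner separations is not in $\theta$, which is exactly $n$-robustness; as $n$ was arbitrary, $\theta$ is robust. Collecting these facts, $\theta$ is a robust principal $k$-profile, as claimed.
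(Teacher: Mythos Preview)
Your proof is correct and follows essentially the same strategy as the paper: verify (P1), (P2), principality, and robustness directly from $(\theta1)$--$(\theta3)$, with (P1) and robustness handled identically via $(\theta1)$ applied to the obvious triple. The only notable difference is in (P2): you argue directly that $G[A_1]\cup G[A_2]\cup G[B_1\cap B_2]=G$ to contradict $(\theta1)$, whereas the paper takes a more roundabout route through $(\theta2)$ and a component of $\bigcup A_i\setminus X$; your version is the cleaner one, and your detailed argument for principality (which the paper dismisses in one line) is a genuine improvement. One small point: in both your (P2) and robustness steps you note that the relevant \emph{vertex sets} union to $V(G)$, but $(\theta1)$ demands that the induced \emph{subgraphs} cover all edges of $G$; you acknowledge the needed edge case-analysis without writing it out, which is fine since it is routine, but be aware that this is where the content lies.
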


\begin{proof}
Let $\theta$ be a principal tangle and let $(C,D)\leq (A,B)$ with $(A,B)\in \theta$.
If $(D,C)\in \theta$, then $G=G[A]\cup G[D]$ which violates ($\theta$1).
So $\theta$ is consistent.

Let $((A_i, B_i))_{i\in I}$ be a family of separations in~$\theta$ such that $\bigcap_{i\in I} 
B_i\neq \es$.
Let $C$ be a component of $\bigcup A_i\setminus X$, where $X=\bigcap_{i\in I} B_i\cap \bigcup_{i\in 
I} A_i$.
So there is an $A_i$ with $C\subseteq A_i$.
By ($\theta$2) we have $(G\sm C,C\cup X)\in \theta$.
If $(\bigcap_{i\in I} B_i,\bigcup_{i\in I} A_i)\in \theta$, then $(B_i,A_i)\leq (G\sm C,C\cup X)$, 
which contradicts ($\theta$1) for $(A_i,B_i)$ and $(B_i,A_i)$.
Thus, we have $(\bigcap_{i\in I} B_i,\bigcup_{i\in I} A_i)\notin \theta$ and hence, $\theta$ is a 
$k$-profile.
It is principal as the tangle is principal.

To prove robustness of~$\theta$, let $(A,B)\in \theta$ and $(C,D)$ be a separation such that 
$(B\cap 
C,A\cup D)$ and $(B\cap D,A\cup C)$ have order less than~$k$.
If both $(B\cap C,A\cup D)$ and $(B\cap D,A\cup C)$ belong to~$\theta$, then we have
\[
G=G[A]\cup [B\cap C]\cup G[B\cap D]
\]
which is impossible by~($\theta$1).
Thus, $\theta$ is robust.
\end{proof}

	\begin{coro}\label{cor_ttd3}
		Let $G$ be a graph and let $\Pcal$ be a set of distinguishable principal tangles of finite 
order.
		Then there exists a \ttd\ distinguishing all principal tangles of finite order in $\Pcal$.\qed
	\end{coro}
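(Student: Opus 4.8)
The plan is to obtain this corollary as a direct specialisation of Theorem~\ref{thm_ttd}, using the proposition just established that every principal tangle of finite order is a robust principal profile. So essentially all the work has already been done; what remains is a translation of vocabulary.

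First I would record that the hypotheses of Theorem~\ref{thm_ttd} are met. By the preceding proposition, each $\theta\in\Pcal$, say of order $k$, is a robust principal $k$-profile, hence an $\ell$-profile with $\ell=k\in\N$; since Theorem~\ref{thm_ttd} permits the value of $\ell$ to vary with the profile, the condition ``$\ell\in\N\cup\{\aleph_0\}$'' holds for every member of $\Pcal$ even though the tangles in $\Pcal$ need not share a common order. Second, I would check that tangle-distinguishability and profile-distinguishability coincide on~$\Pcal$: by definition $\theta_1$ and $\theta_2$ are distinguishable tangles iff there is a separation $(A,B)\in\theta_1$ with $(B,A)\in\theta_2$, and this is exactly the statement that $(A,B)$ distinguishes the profiles $\theta_1$ and $\theta_2$; the only subtle point, quickly dispatched, is that such an $(A,B)$ automatically has order small enough to be oriented by the (possibly higher-order) profile $\theta_2$, which the distinguishability hypothesis in fact hands us directly. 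Thus $\Pcal$, viewed as a set of profiles, is a set of pairwise distinguishable robust principal profiles satisfying the order constraint of Theorem~\ref{thm_ttd}.

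Applying Theorem~\ref{thm_ttd} to $G$ and $\Pcal$ then yields a \ttd, in fact canonical with respect to~$\Pcal$, that distinguishes $\Pcal$ efficiently; in particular it distinguishes every pair of principal tangles of finite order in $\Pcal$, which is what is claimed. I do not anticipate any genuine obstacle here: the content is entirely in the proposition above and in Theorem~\ref{thm_ttd}, and the corollary merely re-reads their conclusion in the language of tangles — the single point warranting a line of care being the agreement of the two notions of ``distinguishable'' across tangles of different orders.
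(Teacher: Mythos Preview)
Your proposal is correct and is exactly the argument the paper intends: the corollary is marked with a bare \qed because it follows immediately from the preceding proposition (principal tangles of order $k$ are robust principal $k$-profiles) together with Theorem~\ref{thm_ttd}. Your extra care in matching the two notions of ``distinguishable'' is appropriate but, as you note, routine.
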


	\section{Locally finite graphs}\label{sec_LocFin}

	In this section, we are applying Theorem~\ref{thm_ttd} to the special case of locally 
finite 
graphs.
	While we will show that for fixed $k\in\N$ there is a canonical \td\ distinguishing all 
	$k$-distinguishable profiles efficiently (Theorem~\ref{thm_LocFinMain1}), it is not 
possible 
to 
	extend this to all distinguishable profiles as Examples~\ref{exam_LocFin} shows.
	But as a further positive result, Theorem~\ref{thm_LocFinMain2} shows that we can at least 
find a 
	nested set of separations distinguishing the distinguishable profiles.
	This nested set does not define a \td\ as it does not satisfy (\ref{property_Finiteness}).
Note that for locally finite graphs, all profiles are principal.
	
	A separation $(A,B)$ of a graph is \emph{tight} if there are components $C_A$ of $A\sm B$ 
and $C_B$ 
	of $B\sm A$ such that every vertex in $A\cap B$ has neighbours in~$C_A$ and in~$C_B$.
	An easy corollary of Lemma~\ref{lem_minimally} is the 
	following.
	
	\begin{prop}\label{prop_corTW}
		Let $G$ be a locally finite graph, let $v\in V(G)$ and let $k\in\N$.
		Then there are only finitely many tight separations of order $k$ with $v$ in their 
separator.\qed
	\end{prop}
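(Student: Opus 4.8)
\medskip

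The plan is to reduce the claimed finiteness to Lemma~\ref{lem_minimally} by extracting, from each tight separation of order~$k$ whose separator contains~$v$, a pair of vertices that the separator separates minimally, in such a way that the number of distinct separations that can give rise to the same pair is controlled by the local degrees. First I would fix the locally finite graph~$G$, the vertex~$v$, and~$k\in\N$, and let $(A,B)$ range over tight separations of order exactly~$k$ with $v\in A\cap B$. By tightness there are components $C_A$ of $A\sm B$ and $C_B$ of $B\sm A$ such that every vertex of the separator $A\cap B$ — in particular $v$ — has a neighbour in~$C_A$ and a neighbour in~$C_B$. Choose such neighbours $a\in C_A$ and $b\in C_B$ of~$v$. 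Since $a\in A\sm B$ and $b\in B\sm A$, the separator $S:=A\cap B$ separates $a$ from~$b$; and since $|S|=k=\kappa(\cdot)$... more precisely, since $(A,B)$ has order~$k$ and is tight, I expect that $S$ in fact separates $a$ and~$b$ \emph{minimally}: any smaller separator $S'$ of $a$ and~$b$ would, together with the components of $G-S'$ containing $a$ and $b$ respectively, yield a separation of $G$ of order less than~$k$ separating $a$ from~$b$, and one then checks this does not contradict anything directly — so the minimality claim needs the structure of tight separations. The cleaner route: observe that $a$ and $b$ are neighbours of a common vertex $v\in S$, so any $a$–$b$ separator has size at least... no. Let me instead simply invoke that $S$ separates $a$ from $b$, not necessarily minimally, and handle minimality by passing to a minimal sub-separator.

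\medskip

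So the key steps are: (1) To each tight $(A,B)$ as above, associate the pair $(a,b)$ of neighbours of~$v$ chosen as above, together with the separator $S=A\cap B$. Since $G$ is locally finite, $v$ has only finitely many neighbours, so there are only finitely many choices for the pair~$(a,b)$; it therefore suffices to bound, for each fixed pair~$(a,b)$, the number of tight separations of order~$k$ with separator separating $a$ from~$b$ and containing~$v$. (2) Show that such a separator $S$ must separate $a$ and $b$ minimally. Here I would argue: suppose $S' \subsetneq$-in-size separates $a$ from~$b$ with $|S'|<|S|=k$; let $C'$ be the component of $G-S'$ containing~$a$ and put $A':=C'\cup N(C')$, $B':=V(G)\sm C'$. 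Then $(A',B')$ is a separation of order $\le |S'| < k$ separating $a$ from~$b$. This on its own is not yet a contradiction, since the Proposition does not assume well-separability. The actual point must be that a \emph{tight} separation of a given order is already of minimal order among separations with that separator — or rather, that the tightness lets us recover~$S$ from~$(a,b)$ essentially uniquely. (3) Given (2), Lemma~\ref{lem_minimally} applied to $u:=a$, $v:=b$, and the bound~$k$ tells us there are only finitely many separators of size $\le k$ separating $a$ and~$b$ minimally. (4) Finally, a tight separation of order~$k$ is determined by its separator~$S$ and the choice of which components of $G-S$ go into~$A$ and which into~$B$ — and tightness forces $C_A\sub A$, $C_B\sub B$; in a locally finite graph with $|S|=k$ fixed there could a priori still be infinitely many components of $G-S$, so this last step needs care: one shows that a tight separation is in fact the corner-type separation $(C_A\cup N(C_A), V(G)\sm C_A)$ (since tightness makes all of $S$ adjacent to $C_A$, but that alone doesn't pin down the rest) — more safely, one notes that each tight order-$k$ separation with separator $S$ corresponds to an ordered partition of the (finitely many, by local finiteness? no — by minimality of $S$ as an $a$–$b$ separator, $G-S$ could still have infinitely many components) ...

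\medskip

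The honest statement of the plan: to each tight separation of order~$k$ with $v$ in its separator, associate a pair of vertices (neighbours of $v$ in the two special components $C_A,C_B$), show the separator separates this pair \emph{minimally}, invoke Lemma~\ref{lem_minimally} to get finitely many candidate separators for each of the finitely many pairs, and then show that tightness together with the specified pair recovers the separation uniquely (the side containing $a$ is forced to be $C_A\cup N(C_A)$ once we know $C_A$ is the component of~$a$ in $G-S$, and tightness forces every vertex of $S$ adjacent to this component, which combined with "every vertex of $S$ also adjacent to the $b$-side" and order~$k$ being minimal pins things down). The main obstacle I expect is step~(2)–(4): proving that the separator of a tight order-$k$ separation separates the chosen pair \emph{minimally} and that the separation is then uniquely determined. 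The resolution should be: if the separator $S$ did not separate $a$ and $b$ minimally, we could shrink it to a proper subset $S_0 \subsetneq S$ still separating $a$ from~$b$; but then some vertex $w\in S\sm S_0$ lies, say, in the $a$-component of $G-S_0$, contradicting that $w$ (being in the separator of a \emph{tight} separation) has a neighbour in $C_B\sub$ the $b$-side — wait, $w$ has neighbours on both sides, so $w$ can't be on one side of $S_0$ without its other-side neighbour crossing $S_0$; this is where tightness does the work. Hence $S$ is a minimal $a$–$b$ separator, Lemma~\ref{lem_minimally} applies, and since (given $S$ and $a$) the component $C_A$ is determined and then $(A,B)=(C_A\cup N(C_A),\,V(G)\sm C_A)$ by tightness together with order-minimality, the total count is finite.
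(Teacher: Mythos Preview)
Your strategy is exactly what the paper has in mind (it offers no argument beyond calling this an easy corollary of Lemma~\ref{lem_minimally}), but there are two concrete problems in your execution.

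First, on minimality: the argument you give for step~(2) only shows that $S=A\cap B$ is an \emph{inclusion}-minimal $a$--$b$ separator, not that $S$ has minimum size. These are different: if $a$ has degree $d<k$ in~$G$, then $N(a)$ is an $a$--$b$ separator of size $d<|S|$, so $S$ need not be size-minimal. Your passage from ``$S$ does not separate $a$ and~$b$ minimally'' to ``shrink it to a proper subset $S_0\subsetneq S$'' conflates the two notions. Fortunately Halin's result~\cite[2.4]{halin91} is in fact stated for inclusion-minimal separators of bounded size, so the inclusion-minimality argument (which is correct: any $w\in S\sm S_0$ is joined to the $a$-component of $G-S_0$ through the connected set $C_A\ni a$ and to the $b$-component through $C_B\ni b$, a contradiction) is exactly what is needed; you should just be explicit about which notion you are using.

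Second, and more seriously, your recovery step is wrong. A tight separation need \emph{not} have the form $(C_A\cup N(C_A),\,V(G)\sm C_A)$: tightness only asserts the existence of components $C_A\subseteq A\sm B$ and $C_B\subseteq B\sm A$ with full neighbourhood~$S$; there may be further components of $G-S$ distributed arbitrarily between the two sides. So the map $(A,B)\mapsto(S,a,b)$ is not injective. The correct fix is precisely the one you considered and then dismissed: since $G$ is locally finite and $S$ is finite, every component of $G-S$ contains a neighbour of some vertex of~$S$, and each vertex of~$S$ has only finitely many neighbours, so $G-S$ has only finitely many components. Hence only finitely many separations share a given separator~$S$, and the total count is finite. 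Your parenthetical ``finitely many, by local finiteness? no'' is the one place where you had the right idea and talked yourself out of it.
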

	
	For a tree $T$ and be a subset $E$ of~$E(T)$, we denote by $T/E$ the tree obtained by 
contracting 
	all edges of~$E$.
	A \td\ $(T',\Vcal')$ of~$G$ is a \emph{refinement} of a \td\ 
$(T,\Vcal)$ 
	of~$G$ if there is a family of disjoint subtrees $(T_i)_{i\in I}$ of~$T'$ covering $V(T')$ 
such that 
	the following holds:
	\begin{enumerate}[(R1)]
		\item $T=T'/\bigcup_{i\in I} E(T_i)$;
		\item $\bigcup_{s\in T_i} V_s'=V_t$, where $t$ is the node of~$T$ obtained from the 
contraction 
		of~$E(T_i)$.
	\end{enumerate}
	
	If $T$ is a finite tree, then it is well-known that there is either a unique vertex or 
unique edge 
	that lies in the middle of every path of maximum length in~$T$.
	We call this vertex or edge the \emph{central} vertex or edge of~$T$.
	It is preserved by all automorphisms of~$T$.
	
	\begin{prop}\label{prop_TDrefinement}
		Let $G$ be a locally finite graph and $(T,\Vcal)$ be a canonical \td\ of~$G$ of 
finite adhesion.
		For every torso $H_t$ of $(T,V)$ let $(T^t,\Vcal^t)$ be a canonical \td\ of~$H_t$ 
of 
finite adhesion 
		such that every separation $(A,B)$ induced by $(T^t,\Vcal^t)$ is tight and such 
that 
no two adhesion 
		sets induce the same separation.
		Then there is a canonical \td\ $(T',\Vcal')$ that is a refinement of $(T,\Vcal)$ 
with respect to a 
		family $(R^t)_{t\in V(T)}$, where $R^t$ is a subdivision of~$T^t$, such that very 
adhesion set 
		of~$(T',\Vcal')$ is an adhesion set of either $(T,\Vcal)$ or one of the \td s 
$(T^t,\Vcal^t)$.
	\end{prop}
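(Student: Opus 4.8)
The plan is to build the refined tree $T'$ by literally gluing the trees $T^t$ into $T$ at the appropriate nodes, and then checking that the resulting pair is a \td\ that refines $(T,\Vcal)$. First I would set up the combinatorial skeleton: for each node $t\in V(T)$ with torso $H_t$ and its \td\ $(T^t,\Vcal^t)$, note that the edges of $T$ incident with $t$ correspond to adhesion sets of $H_t$, which are complete subgraphs of $H_t$ and hence each lies inside some part of $(T^t,\Vcal^t)$. So for each edge $ts\in E(T)$, pick (canonically — e.g.\ minimally in the nested set associated to $(T^t,\Vcal^t)$, or the node of $T^t$ in which the corresponding profile-free adhesion set lives) a node $\rho_t(ts)\in V(T^t)$ containing that adhesion set. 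To make things fit together cleanly I would first subdivide each $T^t$ into $R^t$ so that every node of $R^t$ plays the role of at most one ``attachment point'' for an incident edge of $T$ and so that the attachment points are leaves of $R^t$; this is where the phrase ``$R^t$ is a subdivision of $T^t$'' in the statement comes from. Then form $T'$ as the disjoint union $\bigsqcup_t R^t$ with, for every edge $ts\in E(T)$, an edge joining the attachment leaf of $R^t$ for $ts$ to the attachment leaf of $R^s$ for $st$. Since $T$ is a tree and each $R^t$ is a tree, $T'$ is a tree.

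Next I would define the parts. For a node $u\in V(T')$ lying in the copy $R^t$, if $u$ is an interior (non-attachment) node it corresponds to a node of $T^t$ and I set $V_u':=V_u^t$, the corresponding part of the \td\ of the torso $H_t$; if $u$ is an attachment leaf for the edge $ts$, its part should be the adhesion set $V_t\cap V_s$ of $(T,\Vcal)$ (together with whatever part of $(T^t,\Vcal^t)$ it was split from — the subdivision makes this an honest new leaf whose part is just that adhesion set). Crucially one uses that parts of $(T^t,\Vcal^t)$ are subsets of $V(H_t)=V_t$, so all these new parts are genuine subsets of $V(G)$. I would then verify (T1)--(T3). (T1) is immediate because $(T,\Vcal)$ already covers $V(G)$ and on each $V_t$ the \td\ $(T^t,\Vcal^t)$ covers it. (T2) needs the observation that each edge of $G$ lies in some $V_t$, hence in some part of $(T^t,\Vcal^t)$ — here one uses the edges added to the torso $H_t$, which ensures every adhesion set of $(T,\Vcal)$ is a clique of $H_t$ and so lies in one part of $(T^t,\Vcal^t)$, so no edge of $G$ ``falls between'' the two levels. (T3) is the connectivity-of-the-separator condition and is the technical crux.

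For (T3), fix a vertex $v\in V(G)$ and consider the set $\{u\in V(T'): v\in V_u'\}$. I would argue it is connected in $T'$ in two stages. Within a single $R^t$: the nodes $u\in V(T^t)$ with $v\in V_u^t$ form a subtree by (T3) for $(T^t,\Vcal^t)$, and subdivision plus the attachment-leaf parts (which are subsets of existing parts) preserve connectedness. Across the tree $T$: the nodes $t$ with $v\in V_t$ form a subtree of $T$ by (T3) for $(T,\Vcal)$, and whenever $v\in V_t\cap V_s$ for an edge $ts\in E(T)$, then $v$ lies in the adhesion set $V_t\cap V_s$, hence $v$ belongs to the attachment-leaf parts on both sides of the connecting edge of $T'$ and, by (R2)-type bookkeeping, to a path of parts in $R^t$ running from the $T^t$-subtree for $v$ out to that leaf (this uses that the adhesion set, being a clique in $H_t$, lies in a single part of $(T^t,\Vcal^t)$, namely the one we attached to, or is reached monotonically — the subdivision is chosen precisely so that the unique leaf carrying this adhesion set is reachable). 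Splicing the within-$R^t$ subtrees along these connecting edges gives a subtree of $T'$, establishing (T3). Finally, canonicity: every choice made (the node $\rho_t(ts)$, the subdivision $R^t$, the gluing) is determined by canonical data — the canonical \td s $(T,\Vcal)$ and $(T^t,\Vcal^t)$ and the canonical incidence structure — so $(T',\Vcal')$ commutes with automorphisms of $(G,\Pcal)$; and (R1), (R2) hold by construction with $R^t$ the subtree of $T'$ sitting over $t$, while the adhesion-set claim holds since every edge of $T'$ is either an edge inside some $R^t$ (whose adhesion set is one of $(T^t,\Vcal^t)$, the subdivided edges carrying the same adhesion set as the edge they subdivide) or one of the connecting edges (whose adhesion set is $V_t\cap V_s$, an adhesion set of $(T,\Vcal)$).

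The main obstacle I expect is not (T1)/(T2) or canonicity but getting (T3) across the two levels to go through cleanly: one must be careful that a vertex $v$ in an adhesion set $V_t\cap V_s$ of $(T,\Vcal)$ really does ``connect up'' inside $R^t$ — i.e.\ that the part of $(T^t,\Vcal^t)$ containing the clique $V_t\cap V_s$ and the subtree of nodes of $T^t$ containing $v$ share a node, so there is no gap. This is exactly why the proposition allows passing to a subdivision $R^t$ rather than $T^t$ itself and why the hypotheses insist the induced separations are tight and no two adhesion sets induce the same separation (this rules out pathological coincidences and lets the attachment leaves be defined unambiguously and canonically). Handling this gap-freeness carefully, and choosing the subdivision so that the attachment points are distinct leaves, is the part that needs the most attention; the rest is bookkeeping in the spirit of the finite-graph gluing argument referenced in the remark before the statement.
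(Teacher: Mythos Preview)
Your plan has the right overall shape, but the crucial step --- choosing the attachment node $\rho_t(ts)$ canonically --- is a genuine gap, and you have located the difficulty in the wrong place. The adhesion set $V_t\cap V_s$ is a clique in~$H_t$, so it lies not in a single part of $(T^t,\Vcal^t)$ but in an entire subtree $S^{ts}$ of~$T^t$; your two suggestions for singling out one node (``minimally in the nested set'', ``the node in which the adhesion set lives'') do not define anything. The paper's key idea --- and this is exactly where the hypotheses of local finiteness, tightness, and distinctness of induced separations are used --- is to show that $S^{ts}$ is \emph{finite}: every edge of $S^{ts}$ has an adhesion set containing $V_t\cap V_s$, the induced separations are tight and pairwise distinct, and Proposition~\ref{prop_corTW} says that in a locally finite graph only finitely many tight separations have a given vertex in their separator. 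A finite tree has a canonical centre (a vertex or an edge, fixed by every automorphism), and the paper attaches at that centre; the subdivision $R^t$ is introduced solely to convert a central edge into a vertex when needed. So the real obstacle is canonicity, not (T3): once the attachment is at any node of $S^{ts}$, (T3) across the two levels is immediate, because a vertex of $V_t\cap V_s$ lies in every part indexed by $S^{ts}$ and by $S^{st}$, so the two subtrees for~$v$ meet at the new connecting edge.

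Your intended use of subdivision is also off. Subdividing edges of~$T^t$ cannot turn an interior node into a leaf, and there is no need for attachment points to be leaves; in the paper the attachment point is simply the centre of~$S^{ts}$ (or the new subdivision vertex if the centre is an edge), with its part equal to the corresponding adhesion set of~$(T^t,\Vcal^t)$.
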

	
	\begin{proof}
		We are going to construct a new \td\ $(T',V')$ of $G$ by gluing together the \td s 
$(T^t,\Vcal^t)$ 
		along the tree $T$ in a canonical way.
		Let $tt'\in E(T)$.
		Let $S^{tt'}$ be the maximal subtree of~$T^t$ such that all $V_s^t$ with $s\in 
V(S^{tt'})$ contain $V_t\cap 
		V_{t'}$.
		Then also all adhesion sets corresponding to edges with both its incident vertices 
in $S^{tt'}$ 
		contain $V_t\cap V_{t'}$.
		As the induced separations of these edges are all distinct and tight, 
Proposition~\ref{prop_corTW} 
		implies that $S^{tt'}$ is finite.
		As we mentioned above, $S^{tt'}$ has a unique central vertex or edge, which is 
fixed 
by all 
		automorphisms of~$S^{tt'}$.
		
		Let $E^t$, $U^t$ be the set of edges, of vertices of~$T^t$ that are a central edge, 
a central 
		vertex, for some tree $S^{tt''}$ with $tt''\in E(T)$, respectively.
		We subdivide all edges in~$E^t$ once and obtain a new tree~$R^t$.
		Let $\Wcal^t$ be a set of vertex sets, one for every $s\in V(R^t)$ such that 
$W_s=V^t_s$ if $s$ is a 
		vertex of~$T^t$ and such that $W_s$ is the adhesion set corresponding to the edge 
$e\in E(T^t)$ if $s$ is the 
		vertex that subdivided~$e$.
		It directly follows from the fact that $(T^t,\Vcal^t)$ is a \td\ that also 
$(R^t,\Wcal^t)$ is one.
		
		Let $T'$ be the graph obtained from the disjoint union of all trees $R^t$ for $t\in 
V(T)$ by adding 
		for every edge $tt'\in E(T)$ an edge between the central vertex or vertex on the 
subdivided central 
		edge of~$S^{tt'}$ and that of~$S^{t't}$.
		It is easy to see that contracting the subgraphs $R^t$ of~$T'$ results in~$T$ and 
hence $T'$ is a 
		tree.
		Let $\Vcal'$ be the union of the sets $\Wcal^t$ for all $t\in V(T)$.
		
		That $(T,\Vcal)$ and all $(R^t,\Wcal_t)$ are \td s implies that $(T',\Vcal')$ is 
one, too.
		The \td\ $(T',\Vcal')$ is canonical as the same is true for $(T,\Vcal)$ and all 
$(R^t,\Wcal^t)$ and 
		by construction of~$T'$.
		As the properties (R1) and (R2) hold by construction, the assertion follows.
	\end{proof}

Now we are able to prove the following, which implies Corollary \ref{intro_loc_fin} already 
mentioned in the Introduction.

	\begin{thm}\label{thm_LocFinMain1}
		Let $G$ be a locally finite graph and let $k\in\N$.
		Let $\Pcal$ be a set of $k$-distinguishable robust profiles each of which is an 
$\ell$-profile for 
		some $\ell\in\N\cup\{\infty\}$.
		Then there is a \td\ that is canonical with respect to~$\Pcal$ and that 
distinguishes $\Pcal$ efficiently.
	\end{thm}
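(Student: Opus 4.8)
The plan is to invoke Theorem~\ref{thm_ttd} and then to collapse the resulting \ttd\ into a single \td\ by repeated use of the refinement procedure of Proposition~\ref{prop_TDrefinement}. First I would apply Theorem~\ref{thm_ttd} to $G$ and $\Pcal$; its hypotheses are met because in a locally finite graph every profile is principal. This produces a canonical \ttd\ $((T,r),(G_t)_{t\in V(T)},(T_t,\Vcal_t)_{t\in V(T)})$ distinguishing $\Pcal$ efficiently whose nodes of even level $2j$ carry the $j$-balanced tree-decompositions of Theorem~\ref{thm_canon_td} and whose nodes of odd level $2j+1$ carry the star tree-decompositions of Proposition~\ref{prop_makeNice} of adhesion less than $j+1$ (with all remaining nodes carrying trivial one-node tree-decompositions).

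Secondly, I would argue that, since $k$ is fixed, this \ttd\ may be truncated at a level bounded in terms of $k$: below that level every assigned \td\ is trivial and hence contributes no separations. Indeed $\kappa(\Pcal,G)\le k$ since $\Pcal$ is $k$-distinguishable, and a pair of profiles of~$\Pcal$ that still induces two distinct profiles in a torso of some $G_t$ was not distinguished by $(T_t,\Vcal_t)$; by Proposition~\ref{prop_ProfilesInTD}\,(\ref{itm_ProfilesInTD3}) such a pair needs the same order to be distinguished in that torso as in~$G$, while the even-level tree-decompositions already distinguish every pair separable at the current value of~$\kappa$ and the odd-level ones preserve $\kappa$ by Proposition~\ref{prop_ProfilesInTD}\,(\ref{itm_ProfilesInTD4}). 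Hence the value of~$\kappa$ relevant at a node strictly increases with each successive (even, odd) pair of levels, so once it passes~$k$ every torso carries at most one induced profile and the recursion only appends trivial tree-decompositions.

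Thirdly, with the \ttd\ now of finite depth, I would collapse it by downward induction on the level, applying Proposition~\ref{prop_TDrefinement} at each node~$t$ to the \td\ $(T_t,\Vcal_t)$, where each torso of $(T_t,\Vcal_t)$ that is the graph of a child of~$t$ is refined by the single canonical \td\ already produced for the subtree below that child (and every other torso is left unrefined). This yields a canonical \td\ of~$G_t$ refining $(T_t,\Vcal_t)$, and at the root a canonical \td\ $(T',\Vcal')$ of~$G$. As $(T',\Vcal')$ refines the whole \ttd, each of its adhesion sets occurs as an adhesion set of some $(T_t,\Vcal_t)$, so every separation that witnessed efficient distinguishing in the \ttd\ is induced by $(T',\Vcal')$; hence $(T',\Vcal')$ is canonical with respect to~$\Pcal$ and distinguishes $\Pcal$ efficiently, as desired.

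The hard part is the third step: to apply Proposition~\ref{prop_TDrefinement} one must know that every \td\ fed into it --- the original constituents and the partially glued ones alike --- has tight induced separations and that no two of its adhesion sets induce the same separation. The second condition can always be arranged canonically by contracting the offending edges. Tightness is immediate for the even-level tree-decompositions: their separations are \lc\ and of order $\kappa$ in a graph that, by Proposition~\ref{prop_makeNice}, is well-separable at order $\kappa$, so one side of each such separation is a single component of the graph minus its separator and, by well-separability, every vertex of the separator has a neighbour in that component and in any component on the other side. For the odd-level star tree-decompositions of Proposition~\ref{prop_makeNice} the separations need not be tight, but the subtrees $S^{tt'}$ appearing in the proof of Proposition~\ref{prop_TDrefinement} are finite nonetheless, because the cut-off components are pairwise disjoint and avoid the separators in play while $G$ is locally finite; and one must check that tightness (or this substitute finiteness) persists after each gluing step so that the induction continues. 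This is essentially bookkeeping, but it is where the work lies.
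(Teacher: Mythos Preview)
Your proposal is correct and follows essentially the same strategy as the paper: invoke Theorem~\ref{thm_ttd}, observe that the resulting tree of tree-decompositions has depth bounded in terms of~$k$ because $\Pcal$ is $k$-distinguishable, and then collapse it into a single tree-decomposition by iterated applications of Proposition~\ref{prop_TDrefinement}. The paper's own proof is considerably terser---it simply asserts that the depth is at most $2k+1$ and that $2k+1$ applications of Proposition~\ref{prop_TDrefinement} suffice---whereas you supply more detail on why the depth is bounded and, in particular, correctly flag the verification of the tightness hypothesis of Proposition~\ref{prop_TDrefinement} (notably for the odd-level star decompositions coming from Proposition~\ref{prop_makeNice}) as the place where care is needed, a point the paper leaves implicit. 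One small organisational difference worth noting: the paper iterates top-down, at each step using the original constituent tree-decompositions of the next level as the torso decompositions, while you iterate bottom-up, feeding already-glued decompositions back into Proposition~\ref{prop_TDrefinement}; the top-down order has the advantage that the tightness check is only ever required for the original $(T_t,\Vcal_t)$'s rather than for the intermediate glued ones, which lightens the bookkeeping you mention.
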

	
	\begin{proof}
		Let $((T,r),(G_t)_{t\in V(T)},(T_t,\Vcal_t)_{t\in V(T)})$ be a \ttd\ of~$G$ with 
the 
properties of 
		Theorem~\ref{thm_ttd}.
		Since $\Pcal$ is $k$-distinguishable, the maximum level of $(T,r)$ is $2k+1$ by 
construction.
		So $2k+1$ iterated applications of Proposition~\ref{prop_TDrefinement}, where in 
each step we use 
		all \td\ of the next level of~$T$, lead to a \td\ $(T',\Vcal)$ of~$G$.
		Since the \ttd\ distinguishes all $k$-distinguishable profiles of~$\Pcal$ 
efficiently, so does 
		$(T',\Vcal)$.
	\end{proof}
	
	In our following example, we will show that we cannot omit the condition 
'$k$-distinguishable' in 
	Theorem~\ref{thm_LocFinMain1}.
	
	\begin{exam}\label{exam_LocFin}
		Let $G'$ be the graph with vertex set 
		\[
		V(G')=\{(n,k/(3^n))|n\in\N, 0\leq k\leq 2\cdot 3^n\}
		\]
		and edges joining $(n,k/(3^n))$ with $(n,(k+1)/(3^n))$ and joining $(n,k/(3^n))$ 
with 
		$(n+1,k/(3^n))$.
		For $k\in\N$ let
		\[
		U_n^1:=\{(n,k/(3^n))\mid 0\leq k\leq 3^n\}\cup \{(n+1,k/(3^{(n+1)}))\mid 0\leq 
k\leq 
3^{(n+1)}\}
		\]
		and
		\begin{align*}
		U_n^2:=&\{(n,k/(3^n))\mid 0\leq k-3^n\leq 3^n\}\\
		&\cup\{(n+1,k/(3^{(n+1)}))\mid 0\leq k-3^{(n+1)}\leq 3^{(n+1)}\}.
		\end{align*}
		Let $\ell_n:=|U_n^1|=|U_n^2|$ and let $G_n^i$, for $i=1,2$, be new graphs with 
vertex set 
		$\{(i,j)\mid i\in\N, 1\leq j\leq \ell_k)\}$ and edges between $(i_1,j_1)$ and 
$(i_2,j_2)$ if 
		$|i_1-i_2|\leq 1$.
		Let $G$ be the disjoint union of $G'$ with all $G^i_n$ for $i=1,2$ and $n\in\N$ 
with 
the following 
		additional edges: for every $n\in\N$ and every $i\in\{1,2\}$ we add an edge between 
every vertex of 
		$U^i_n$ and the vertices $(0,j)$ in~$G^i_n$.
		Note that $G$ is locally finite.
		
		Let $\Pcal$ be the profiles that are induced by the ends of~$G$.
		We will prove that $G$ has no \td\ that distinguishes $\Pcal$ efficiently.
		The end in $G^1_n$ is separated from the end in~$G'$ minimally by the vertex set
		\[
		S_n^1:=\{(n+1,k/(3^{(n+1)}))\mid 0\leq k\leq 3^{(n+1)}\}\cup\{(N,1)\mid 0\leq N\leq 
n\}.
		\]
		Similarly, the end in $G^2_n$ is separated from the end in~$G'$ minimally by the 
vertex set
		\[
		S_n^2:=\{(n+1,k/(3^{(n+1)}))\mid 0\leq k-3^{(n+1)}\leq 3^{(n+1)}\}\cup\{(N,1)\mid 
0\leq N\leq n\}.
		\]
		As $G-S_n^i$ has precisely two components, there is a unique separation 
$(A_n^i,B_m^i)$ with 
		$V(G_n^i)\sub A_n^i$ that distinguishes the profiles corresponding to those ends 
efficiently.
		So if we had a \td\ $(T,\Vcal)$ of~$G$ distinguishing $\Pcal$ efficiently, its 
induced separations 
		are those we just constructed and between $(A_n^1,B_n^1)$ and $(B_n^2,A_n^2)$ there 
are all 
		$(A_N^1,B_N^1)$ and $(B_N^2,A_N^2)$ with $N>n$.
		Hence there are infinitely many edges of~$T$ between the two edges corresponding to 
the separations 
		$(A_n^1,B_n^1)$ and $(B_n^2,A_n^2)$ which is impossible.
		This contradiction shows that no \td\ distinguishes $\Pcal$ efficiently.
	\end{exam}
	
	\begin{thm}\label{thm_LocFinMain2}
		Let $G$ be a locally finite graph and let $\Pcal$ be a set of distinguishable 
robust 
profiles each 
		of which is an $\ell$-profiles for some $\ell\in\N\cup\{\infty\}$.
		Then there is a nested set of separations that is canonical with respect to~$\Pcal$ 
and that distinguishes $\Pcal$ efficiently.
	\end{thm}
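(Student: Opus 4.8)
The plan is to run the same recursive construction as in Theorem~\ref{thm_ttd} and then glue the resulting tree of \td s into a single \td\ using iterated applications of Proposition~\ref{prop_TDrefinement}, exactly as in the proof of Theorem~\ref{thm_LocFinMain1}, but now \emph{without} stopping the refinement process at level $2k+1$. First I would apply Theorem~\ref{thm_ttd} to obtain a canonical \ttd\ $((T,r),(G_t)_{t\in V(T)},(T_t,\Vcal_t)_{t\in V(T)})$ of $G$ distinguishing $\Pcal$ efficiently and satisfying (1)--(5) of that theorem; note that since $G$ is locally finite every profile in $\Pcal$ is automatically principal, so the hypotheses of Theorem~\ref{thm_ttd} are met. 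The \td s on even levels are $k$-balanced (so in particular of finite adhesion), and those on odd levels have adhesion at most $k$, hence also finite adhesion. Before gluing I would replace each $(T_t,\Vcal_t)$ on an even level by the \td\ induced (via Theorem~\ref{thm_nestedToTD}) by the canonical, nice, distinguishing nested set $\Ncal$ produced inside the proof of Theorem~\ref{thm_canon_td}; the separations in that set are \lc\ of order exactly $k$, and I would argue that after passing to torsos they may be taken tight (a crossing corner argument, using well-separability from Lemma~\ref{lem_torso}, shows a non-tight separation of minimum order can be replaced by a tight one of the same order). This is what makes Proposition~\ref{prop_TDrefinement} applicable at each gluing step.

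The second step is the gluing. Order the levels $1,2,3,\dots$ of $(T,r)$ and apply Proposition~\ref{prop_TDrefinement} iteratively: having glued everything up to level $n$ into a canonical \td\ of finite adhesion, glue in the \td s on level $n+1$ (these are canonical \td s of the torsos, with tight induced separations, no two adhesion sets inducing the same separation) to obtain a canonical refinement. Each individual application produces a genuine \td, and canonicity is preserved because Proposition~\ref{prop_TDrefinement} makes no arbitrary choices. The limit of this process yields a tree $T_\infty$ and a family $\Vcal_\infty$; one must check that $(T_\infty,\Vcal_\infty)$ is still a \td\ of $G$. Properties (T1) and (T2) are clear since every vertex and edge already appears in some torso at some finite level. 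For (T3) one uses that the subtrees $R^t$ glued in at successive levels are nested along $T$, so any two parts $V_s,V_{s'}$ and any separating part on the $s$--$s'$ path in $T_\infty$ already witness (T3) in one of the finitely many \td s involved in separating them. The key point where this differs from Theorem~\ref{thm_LocFinMain1} is that the resulting tree need not be locally finite and need not satisfy (\ref{property_Finiteness}) --- which is exactly why the conclusion is only a nested set of separations, not a \td; so strictly speaking I would phrase the output as: the set $\Ncal_\infty$ of all separations of $G$ induced by edges of $T_\infty$ (equivalently, the union over all $t$ and all levels of the lifted nested sets $\Mcal^G$) is nested, canonical, and distinguishes $\Pcal$ efficiently.

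Concretely, I would verify nestedness of $\Ncal_\infty$ directly rather than via a tree: any two of its separations come from (possibly different) torsos on the \ttd; if they come from the same torso they are nested because the sets $\Ncal$ resp.\ $\Mcal(X)$ constructed in Theorems~\ref{thm_canon_td} and~\ref{thm_ttd} are nested, and if they come from torsos at different nodes $Y_1\neq Y_2$ then one of the two torsos sits ``below'' an adhesion set of an ancestor \td, so one separation is $\leq$ the corner separation bounding that torso and hence nested with everything lifted from inside it --- this is the content of \cite[Observation 4.22]{topo_ends} used in the proof of Theorem~\ref{thm_canon_td}, and I would cite it again here. Efficiency is inherited directly from Theorem~\ref{thm_ttd}\,(\ref{itm_ttd_distinguish}) together with Proposition~\ref{prop_ProfilesInTD}\,(\ref{itm_ProfilesInTD3}): for any two profiles in $\Pcal$ there is a node of the \ttd\ whose induced separation distinguishes their induced profiles efficiently, and by Proposition~\ref{prop_ProfilesInTD}\,(\ref{itm_ProfilesInTD3}) this lifts to a separation of $G$ of the same order distinguishing the original pair efficiently, and that separation lies in $\Ncal_\infty$.

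The main obstacle I anticipate is bookkeeping the tightness reduction cleanly: the nested sets produced in Theorem~\ref{thm_canon_td} are only guaranteed \lc\ of minimum order and minimum crossing number, not tight, so before invoking Proposition~\ref{prop_TDrefinement} one genuinely needs a lemma (in the spirit of Lemma~\ref{lem_compsep} and the corner-separation estimate Lemma~\ref{lem_inequality}) saying that in a well-separable graph every \lc\ separation of order $\kappa$ can be canonically replaced by a tight one of the same order distinguishing the same profiles, and that this replacement does not destroy nestedness. A secondary subtlety is that ``no two adhesion sets induce the same separation'' --- required by Proposition~\ref{prop_TDrefinement} --- must be arranged; this follows because the nested set $\Ncal$ consists of \emph{distinct} separations, each inducing a distinct adhesion set under the $T(\Ncal)$ construction, but it should be stated explicitly. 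Once these two technical points are in place, the rest is a routine transfinite/ countable iteration mirroring the proof of Theorem~\ref{thm_LocFinMain1}.
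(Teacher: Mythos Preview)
Your approach is essentially the paper's: apply Theorem~\ref{thm_ttd}, glue the first $2k$ levels via iterated Proposition~\ref{prop_TDrefinement} into a \td\ $(T^k,\Vcal^k)$, and take the union of the resulting nested sets. You overcomplicate this in two places. First, there is no need to construct a limiting tree $T_\infty$ and verify (T1)--(T3) for it; the paper simply observes that $(T^{k+1},\Vcal^{k+1})$ is a refinement of $(T^k,\Vcal^k)$, so the induced nested sets satisfy $\Ncal^k\subseteq\Ncal^{k+1}$, and $\bigcup_k\Ncal^k$ is automatically nested, canonical, and efficiently distinguishing. Your third-paragraph direct nestedness argument is then unnecessary. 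Second, your main anticipated obstacle dissolves: on an even level $2k$ the torso $G_t$ is well-separable with respect to~$\Pcal_t$ (this is exactly what the odd-level step via Proposition~\ref{prop_makeNice} arranges), and in a well-separable graph every relevant separation $(A,B)$ of order $\kappa$ has $N(C)=A\cap B$ for \emph{every} component $C$ of $G_t-(A\cap B)$, so each such separation is already tight. No canonical replacement lemma is needed.
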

	
	\begin{proof}
		Let $((T,r),(G_t)_{t\in V(T)},(T_t,\Vcal_t)_{t\in V(T)})$ be a \ttd\ of~$G$ with 
the 
properties of 
		Theorem~\ref{thm_ttd}.
		For $k\in\N$, let $(T^k,\Vcal^k)$ be the \td\ obtained by applying 
		Proposition~\ref{prop_TDrefinement} iteratively for the the subtree of $(T,r)$ 
consisting of all 
		vertices on the first $2k$ levels.
		By construction, $(T^{k+1},\Vcal^{k+1})$ is a refinement of $(T^k,\Vcal^k)$.
		Let $\Ncal^k$ be the nested set of separations induced by $(T^k,\Vcal^k)$.
		Then $\Ncal^k\sub \Ncal^{k+1}$.
		Thus, $\bigcup_{n\in\N}\Ncal^n$ is a nested set of separations.
		It distinguishes $\Pcal$ efficiently as the \ttd\ does so and it is canonical, as 
all steps in this 
		proof keep this property and the \ttd\ we started with is canonical.
	\end{proof}
	
	\section{Concluding remarks}

In the exposition in the Introduction we focussed on main ideas and gave some theorems in a 
more concrete formulation. Here we summarise some results that are slightly stronger in details 
than those stated in the Introduction. 

	\begin{remark}
	 Theorem \ref{thm_LocFinMain1} is also true if we relax `locally finite' to the property 
that the removal of finitely many vertices only leaves 
finitely many components. The proof is essentially the same. 
	\end{remark}

	\begin{remark}
	 In this paper we proved the strengthening of Theorem \ref{intro_simple2} for arbitrary 
subsets of the set of robust principal profiles, and the tree of \td s we obtain is canonical 
with respect to that subset, compare Theorem \ref{thm_ttd}.
	\end{remark}

	\begin{remark}
	 
		Theorem \ref{thm_ttd} easily implies the following variant. (To see this one has to 
simply `move separations of low order at higher levels down to lower levels'. We leave the details 
to the reader.)

		Let $G$ be a graph and $\Pcal$ a set of distinguishable robust principal profiles each of 
which is an $\ell$-profile for some $\ell\in\N\cup\{\aleph_0\}$.
		Then there exists a \ttd\ $((T,r),(G_t)_{t\in V(T)},(T_t,\Vcal_t)_{t\in V(T)})$ 
that 
is canonical with respect to~$\Pcal$ such that 
		the following hold.
		\begin{enumerate}[{\rm (1)}]
			\item The \ttd\ distinguishes $\Pcal$ efficiently;
			\item if $t\in V(T)$ has level $k$, then $(T_t,\Vcal_t)$ is $k$-balanced;
			\item nodes $t$ at all levels have $|V(T_t)|$ neighbours on the next 
level and the graphs assigned to them are all torsos of $(T_t,\Vcal_t)$.
		\end{enumerate} 
	 \end{remark}

{\bf Further related work.} 
	If we consider the class of quasi-transitive graphs, then Hamann, Lehner, Miraftab and 
R\"uhmann~\cite{HLMR} proved with the aid of our main result that those graphs that admit a 
canonical \td\ distinguishing 
	all their ends are the accessible graphs, that is, the graphs that are obtained from finite 
or one-ended quasi-transitive graphs by tree amalgamations of finite adhesion and finite 
identification respecting the group actions. This result in turn is used in \cite{H-TAQI,H-Asdim, 
H-TAHypBound} to investigate quasi-isometry types, homeomorphism types of hyperbolic boundaries and 
asymptotic dimension of quasi-transitive locally finite graphs.
Also Miraftab and Stavropoulos \cite{miraftab2019splitting} used canonical tree-decompositions to 
classify all infinite groups which admit cubic Cayley
graphs of connectivity 2 in terms of splittings over a subgroup.

\section*{Acknowledgement}

We thank C.~Elbracht and J.~Kneip for pointing out an error in an earlier version of this paper.
	
\providecommand{\bysame}{\leavevmode\hbox to3em{\hrulefill}\thinspace}
\providecommand{\MR}{\relax\ifhmode\unskip\space\fi MR }
\providecommand{\MRhref}[2]{%
  \href{http://www.ams.org/mathscinet-getitem?mr=#1}{#2}
}
\providecommand{\href}[2]{#2}

\end{document}